\newcommand{\smin}{\mbox{\rm($\star$)}}
\newcommand{\sLmin}{\mbox{\rm($\Lambda\star$)}}
\newenvironment{pfof}[1]{\par\noindent\textbf{Proof of #1.}}{\hfill $\square$}
\newcommand{\bbZ}{\mathbb{Z}}
\newcommand{\bbC}{\mathbb{C}}
\newcommand{\mJ}{\mathcal{J}}
\newcommand{\hmI}{\mathcal{I}}
\newcommand{\mQ}{\mathcal{Q}}
\newcommand{\bM}{\widetilde{M}}
\newcommand{\bS}{\widetilde{S}}
\newcommand{\tN}{\widetilde{N}}
\newcommand{\tR}{\widetilde{R}}
\newcommand{\tU}{\widetilde{U}}
\newcommand{\mC}{\mathcal{C}}
\newcommand{\eps}{\varepsilon}
\newcommand{\JR}{\Phi_R}
\newcommand{\JL}{\Phi_L}
\newcommand{\hI}{\hat{I}}
\newcommand{\const}{\mathrm{const}}
\newcommand{\NE}{\mathrm{NE}}
\newcommand{\Diff}{\mathrm{Diff}}
\newcommand{\id}{\mathrm{id}}
\newcommand{\wR}{R}
\newtheorem{theorem}{Theorem}[section]
\newtheorem{proposition}[theorem]{Proposition}
\newtheorem{corollary}[theorem]{Corollary}
\newtheorem{lemma}[theorem]{Lemma}
\newtheorem{question}[theorem]{Question}
\newtheorem{example}[theorem]{Example}
\theoremstyle{definition}
\newtheorem{definition}[theorem]{Definition}
\newtheorem{remark}[theorem]{Remark}
\newcommand{\Fix}{\mathop{\mathrm{Fix}}}
\newcommand{\Leb}{\mathrm{Leb}}
\newcommand{\PSL}{\mathrm{PSL}}
\newcommand{\Sc}{\mathbb{S}^1}
\newcommand{\bg}{\overline{\gamma}}
\newcommand{\Sf}{\widehat{S}}
\newcommand{\tC}{\widetilde{\mC}}
\newcommand{\emr}[1]{\textcolor{black}{#1}}  
\date{\today}
\title{On the ergodic theory of free group actions by real-analytic circle diffeomorphisms}
\author{Bertrand Deroin, Victor Kleptsyn, Andr\'es Navas}
\begin{document}
\maketitle
\begin{abstract}
We consider finitely generated groups of real-analytic circle diffeomorphisms. We show that if such a group admits 
an exceptional minimal set ({\em i.e.}, a minimal invariant Cantor set), 
then its Lebesgue measure 
is zero; moreover,  there are only finitely many orbits of connected components 
of its complement. 
For the case of minimal actions, we show that if 
the underlying group is (algebraically) free, then the action is ergodic with respect to the Lebesgue measure. 
This provides first answers to questions due to \'E. Ghys, G. Hector and D. Sullivan. 
\end{abstract}

\tableofcontents

\section{Introduction}

\subsection{Overview and statements of results}

This work is mainly motivated by the next longstanding questions in the theory of codimension-one 
foliations.  As far as we know, Question~\ref{one} and the first half of Question~\ref{two} below go 
back to D.~Sullivan and \'E.~Ghys, whereas the second half of Question \ref{two} goes back to 
G.~Hector.

\begin{question}\label{one}
Let $\mathcal{F}$ be a codimension-one foliation on a compact manifold~$M$ that is transversally 
of class $C^2$. Assume that $\mathcal{F}$ is minimal, that is, the 
closure of every leaf  is the whole manifold. Is it true that $\mathcal{F}$ is ergodic with respect 
to the transversal Lebesgue measure ? In other words, if $A$ is a measurable union of leaves, is it true 
that either $\Leb (A) = 0$ or $\Leb (M \setminus A) = 0$? 
\end{question}

\begin{question}\label{two}
Let $\mathcal{F}$ be a codimension-one foliation on a compact manifold that is transversally of class 
$C^2$. Assume that $\mathcal{F}$ admits an exceptional minimal set. Does this set have zero 
Lebesgue measure, and does its complement have only finitely many connected components?
\end{question}

Many interesting examples of foliations are obtained by the suspension of a group action (see \cite{CC}). 
Actually, this provides a natural framework for testing potential conjectures for general foliations. In the 
codimension-one context, the underlying space (fiber) should be obviously the circle, and the questions 
above translate into the next ones:

\begin{question}\label{q:m}
Let $G$ be a finitely generated group of $C^2$ circle diffeomorphisms. Assume that $G$ acts 
minimally, that is, every orbit is dense in the circle. Is the action necessarily ergodic with respect to the Lebesgue 
measure ? In other words, is it necessarily true that for every \emph{measurable} $G$-invariant subset 
$A\subset \Sc$, one has either $\Leb(A)=0$ or $\Leb (\Sc\setminus A) =0$?
\end{question}

\begin{question}\label{q:m-e}
Let $G$ be a finitely generated group of $C^2$ circle diffeomorphisms. 
Assume that the action of $G$ admits a minimal invariant Cantor set $\Lambda$ (also called 
an exceptional minimal set). Does $\Lambda$ necessarily have zero Lebesgue measure? Is the set 
of orbits of connected components of \hspace{0.03cm} $\mathbb{S}^1 \setminus \Lambda$ finite?
\end{question}

For simplicity, in what follows we will assume that all diffeomorphisms preserve the orientation. 
Notice that the general case reduces to this one, as the subgroup of orientation-preserving 
elements has index two in the original group, hence it carries all its relevant dynamical properties.

The aim of this work is to provide proofs of affirmative answers to Questions~\ref{q:m} 
and~\ref{q:m-e} for actions by real-analytic diffeomorphisms, with the extra hypothesis 
that the group $G$ is algebraically free for the first one. 
The reason for just treating 
free-group actions comes from that the very simple algebraic structure allows giving more 
elementary combinatorial arguments. Moreover, according to a theorem of Ghys to be 
discussed below, it still allows dealing with actions of non-necessarily free groups admitting 
an exceptional minimal set. We will discuss possible 
generalizations of these results in \S{}\ref{s:next} below; for the moment, we mention that 
our approach and techniques yield a road towards Question~\ref{two} and 
eventually to Question~\ref{one}. Nevertheless, more general cases will be treated
in separate works.

Finally, let us point out that translating our results and approach to the language of 
codimension-one foliations also seems to provide a road towards the well-known 
question of the topological invariance of the Godbillon--Vey class \cite{ghys-god}. 
Namely, given a codimension-one foliation, one can ask whether its holonomy 
pseudogroup is (locally) discrete. In the case it is, an analogue of our Main Theorem, 
combined with Theorem~\ref{t:KF} below would give us a rigid description 
of the dynamics that would probably suffice to describe the GV class. On the other hand, 
if the holonomy pseudogroup is not (locally) discrete, a topological conjugacy is often 
(transversally) analytic, which implies the invariance of the GV class almost immediately.

\vspace{0.18cm}
 
Let us begin by reviewing some of the literature on the subject. This will allow us to state our main 
(and somewhat technical) theorem from which the results announced above will directly follow.  

First, in what concerns minimal actions, if the group is generated by a single diffeomorphism, 
then its ergodicity is guaranteed by a theorem independently proven by A. Katok in the $C^{1+bv}$ 
case \cite{Katok}, and by M. Herman~\cite{Herman} in the $C^{1+Lip}$ case:

\begin{theorem}[Katok--Herman]
The action of every minimal $C^2$ circle diffeomorphism (equivalently, of every $C^2$ diffeomorphism 
with irrational rotation number) is ergodic with respect to the Lebesgue measure.
\end{theorem}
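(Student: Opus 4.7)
The plan is to combine Denjoy's theorem with a Lebesgue density argument, using uniform distortion control of iterates along the canonical dynamical partitions determined by the continued fraction expansion of the rotation number $\alpha$.

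First, minimality forces $\alpha$ to be irrational, and by Denjoy's theorem (valid already in $C^{1+\mathrm{bv}}$) the diffeomorphism $f$ is topologically conjugate to the rotation $R_\alpha$. Fixing any basepoint $x_0 \in \Sc$ and denominators $q_n$ of the convergents of $\alpha$, the short intervals $I_n = [x_0, f^{q_n}(x_0))$ and $J_n = [f^{q_{n+1}}(x_0), x_0)$ shrink to $x_0$, and their forward iterates
\[
\mathcal{P}_n \;=\; \bigl\{ f^j(I_n) : 0 \le j < q_{n+1} \bigr\} \;\cup\; \bigl\{ f^i(J_n) : 0 \le i < q_n \bigr\}
\]
form a partition of $\Sc$ modulo finitely many points.

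Next I would establish the key distortion estimate: there exists a constant $C$ depending only on $\Var(\log Df)$ such that, for every $n$, every $0 \le j < q_{n+1}$, and every $y,z \in I_n$,
\[
\frac{Df^j(y)}{Df^j(z)} \;\le\; C,
\]
and analogously for the tower over $J_n$. The proof relies on the pairwise disjointness of the intervals $f^0(I_n), \dots, f^{q_{n+1}-1}(I_n)$, combined with the chain-rule bound
\[
\log\frac{Df^j(y)}{Df^j(z)} \;\le\; \sum_{k=0}^{j-1}\Var\bigl(\log Df \big|_{f^k(I_n)}\bigr) \;\le\; \Var(\log Df).
\]
Herman's $C^{1+\mathrm{Lip}}$ version runs identically, with a Lipschitz bound on $\log Df$ in place of the total variation.

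Now let $A \subset \Sc$ be any $f$-invariant Borel set with $\Leb(A) > 0$, and choose $x_0 \in A$ a Lebesgue density point. For arbitrary $\eps > 0$, pick $n$ large enough that $\Leb(A^c \cap I_n) \le \eps \Leb(I_n)$ and $\Leb(A^c \cap J_n) \le \eps \Leb(J_n)$. Using $f$-invariance of $A^c$, each piece $P = f^j(I_n) \in \mathcal{P}_n$ satisfies $A^c \cap P = f^j(A^c \cap I_n)$, and the distortion estimate yields $\Leb(A^c \cap P) \le C \eps\, \Leb(P)$ (and similarly for pieces over $J_n$). Summing over the partition gives $\Leb(A^c) \le C\eps$; letting $\eps \to 0$ forces $\Leb(A^c) = 0$, so $f$ is Lebesgue ergodic.

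The main obstacle is the pairwise disjointness that underlies the distortion estimate: this is the combinatorial content of the Denjoy tower construction, transported from $R_\alpha$ to $f$ through the topological conjugacy, and it is precisely what can fail in the $C^1$ regime (as Denjoy's own counterexamples demonstrate). Once disjointness is in hand, both the distortion bound and the density-point argument are essentially routine, and the $C^2$ assumption enters only through the finiteness of $\Var(\log Df)$.
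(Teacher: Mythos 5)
The paper does not prove the Katok--Herman theorem; it is quoted as a background result and attributed to \cite{Katok} and \cite{Herman}. Your argument is a correct rendition of the standard proof (essentially Katok's): the pairwise-disjoint continued-fraction towers give the uniform Denjoy distortion bound $e^{\Var(\log Df)}$, and pushing the density estimate at $x_0$ through the dynamical partition forces $\Leb(A^c)=0$. Two small remarks: you implicitly use that a.e.\ point of $A$ is a \emph{one-sided} density point from both sides, so that the estimate holds on both $I_n$ and $J_n$ (which abut $x_0$ from opposite sides); this is true and standard, but worth stating. And since $C^{1+\mathrm{Lip}}\subset C^{1+\mathrm{bv}}$, Herman's case is already covered by the bounded-variation hypothesis; the two results are listed separately in the paper because they were obtained independently, not because the Lipschitz case requires a different estimate.
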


This theorem easily extends to (minimal) group actions with an invariant probability measure (such a group 
necessarily contains an element of irrational rotation number -and is conjugated to a group of rotations- 
provided it is finitely generated). For richer actions, a partial answer to Question~\ref{q:m-e} comes 
from Sullivan's exponential expansion strategy (see for instance~\cite{Navas-enseignement, Shub-Sullivan}):

\begin{theorem}[Sullivan]
Let $G$ be a group of $C^{1+\alpha}$ circle diffeomorphisms, with $\alpha > 0$. Assume that for all 
$x \in \Sc$ there exists $g\in G$ such that $g'(x)>1$. If the action of $G$ is minimal, then it is 
ergodic with respect to the Lebesgue measure.
\end{theorem}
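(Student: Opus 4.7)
The plan is to implement Sullivan's exponential-expansion strategy. First, I would upgrade the pointwise expansion hypothesis to a uniform one: by continuity of the derivatives and compactness of $\Sc$, there exist finitely many $g_1,\dots,g_k\in G$, an open cover $U_1,\dots,U_k$ of $\Sc$, and $\lambda>1$ such that $g_i'(x)\geq\lambda$ for every $x\in U_i$. Assume for contradiction that $A\subset\Sc$ is a measurable $G$-invariant set with both $\Leb(A)>0$ and $\Leb(\Sc\setminus A)>0$; by the Lebesgue density theorem, $A$ and its complement admit density points $x_A$ and $x_{A^c}$ respectively.

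The core is a bounded-distortion expansion lemma: there exist $\delta_0>0$ and $K<\infty$ such that, for every sufficiently small interval $I\ni x$, one can build a composition $h=g_{i_n}\circ\cdots\circ g_{i_1}$ with $|h(I)|\in[\delta_0/2,\delta_0]$ and $\sup_{y,z\in I}h'(y)/h'(z)\leq K$. I would build $h$ greedily: while $|h_j(I)|<\delta_0$, the center of $h_j(I)$ lies in some $U_i$, and (after shrinking $\delta_0$ so that each $g_i'$ varies by a factor at most $2$ on intervals of length $\leq\delta_0$) applying $g_i$ multiplies the length by at least $\lambda/2$, so the process terminates in finitely many steps. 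The $C^{1+\alpha}$ hypothesis enters through the telescoping estimate
$$\Bigl|\log\frac{h'(y)}{h'(z)}\Bigr|\leq\sum_{j=0}^{n-1}\bigl|\log g_{i_{j+1}}'(h_j y)-\log g_{i_{j+1}}'(h_j z)\bigr|\leq C\sum_{j=0}^{n-1}|h_j(I)|^{\alpha},$$
where $C$ depends only on the $\alpha$-H\"older norms of $\log g_i'$; the intermediate lengths $|h_j(I)|$ form a geometric sequence going backwards, so the right-hand side is bounded independently of $n$.

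Applying the lemma at $x_A$ and using the $G$-invariance $h(A\cap I)=A\cap h(I)$, bounded distortion yields
$$\frac{\Leb((\Sc\setminus A)\cap h(I))}{|h(I)|}\leq K\cdot\frac{\Leb((\Sc\setminus A)\cap I)}{|I|}\longrightarrow 0$$
as $I$ shrinks onto $x_A$, producing arcs of length $\geq\delta_0/2$ on which $A$ has density arbitrarily close to $1$. To derive the contradiction I would use minimality to steer such an arc near $x_{A^c}$: since the orbit $G\cdot x_A$ is dense one can find $\tilde h\in G$ with $\tilde h(x_A)$ arbitrarily close to $x_{A^c}$, and, combining this with the greedy expansion adapted to stop at any prescribed small scale $r$, I would produce $H\in G$ and an interval $I\ni x_A$ so that $H(I)\subset(x_{A^c}-r,x_{A^c}+r)$ has length comparable to $r$ and the distortion of $H$ on $I$ stays bounded. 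The arc $H(I)$ then simultaneously has $A$-density close to $1$ (from the estimate above) and $A$-density tending to $0$ (because $x_{A^c}$ is a density point of $\Sc\setminus A$ and $H(I)$ has length comparable to $r$), a contradiction.

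The main obstacle is the bounded-distortion lemma, where $C^{1+\alpha}$ regularity is indispensable: in pure $C^1$ the geometric series above need not converge, and indeed the theorem is false in that regularity. A secondary technical point is the steering step, which combines the greedy expansion with minimality to land the final image arc at a prescribed scale near a prescribed point of $\Sc$; this requires careful bookkeeping but is more routine than the distortion estimate.
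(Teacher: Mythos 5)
The paper does not prove this theorem; it is cited (with references to Navas's \emph{Enseignement} article and to Shub--Sullivan) as one of the known results motivating the work. So I will assess your proposal on its own terms. The overall architecture is exactly the Sullivan expansion strategy that those references use: upgrade the pointwise expansion hypothesis to a uniform one by compactness, run a greedy expansion with bounded distortion up to a fixed macroscopic scale $\delta_0$, control the distortion by the $C^{1+\alpha}$ telescoping bound together with the backward-geometric decay of intermediate lengths, and play this against the Lebesgue density theorem. Your distortion estimate is correct, and the density-point computation using $G$-invariance of $A$ is correct.

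Two points deserve attention. First, a small slip in the termination argument: you only know $\lambda>1$, so after allowing $g_i'$ to vary by a factor $2$ on intervals of length $\le\delta_0$ you get a guaranteed growth factor of $\lambda/2$, which need not exceed $1$. The fix is trivial --- either allow the variation factor to be any number in $(1,\lambda)$, or, more cleanly, observe that the $U_i$ form a finite open cover of $\Sc$ and take $\delta_0$ to be a Lebesgue number, so that every interval of length $\le\delta_0$ lies entirely inside some $U_i$ and gets stretched by a factor $\ge\lambda$.

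Second, and more substantively, the ``steering'' step at the end is not under control as you have stated it. The greedy expansion produces an arc of length $\approx\delta_0$ \emph{somewhere} on the circle; you have no control over where it lands, so ``adapting it to stop at a prescribed small scale $r$ near $x_{A^c}$'' is not something the construction lets you do, and precomposing with a map $\tilde h$ bringing $x_A$ near $x_{A^c}$ does not survive the subsequent expansion. The standard way to close the argument avoids this entirely: the expanded arcs $h_n(I_n)$ all have length bounded below by a fixed $\delta_0/C$, so by compactness a subsequence converges to a fixed arc $J$ with nonempty interior, and the density estimate gives $\Leb(J\setminus A)=0$. By $G$-invariance, $\Leb(g(J)\setminus A)=0$ for every $g\in G$, and by minimality the sets $g(\interior J)$, $g\in G$, cover the circle; hence $\Leb(\Sc\setminus A)=0$, a contradiction. (Equivalently, run the same extraction at $x_{A^c}$ to get an arc $J'$ with $\Leb(J'\cap A)=0$, then use minimality to move $J$ so that it overlaps $J'$ on an open set.) With this replacement the proof is complete and is the one the cited references have in mind.
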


A similar panorama arises when dealing with actions by $C^2$ diffeomorphisms with an exceptional 
minimal set (that is, a minimal invariant Cantor set) $\Lambda$. Indeed, in such a case, no invariant 
probability measure can exist provided the group is finitely generated (this is essentially a 
consequence of the Denjoy theorem). Moreover, if for every $x \in \Lambda$ there exists $g \in G$ 
such that $g'(x) > 1$, then the Lebesgue measure of $\Lambda$ can be easily shown to be zero. 

The approach above was pursued by S. Hurder~\cite{Hurder}, who defined a Lyapunov expansion exponent 
for the action: 
$$\mathcal{L}(x) := \limsup_{n \to \infty} \frac{1}{n} \max_{g \in B(n)} \log(g'(x)),$$
where $B(n)$ denotes the ball of radius $n$ (centered at $id$) 
with respect to some fixed finite system of generators of $G$.
He proved that if $G$ is made of $C^{1+\alpha}$ diffeomorphisms, $\alpha > 0$, 
then this exponent is constant Lebesgue almost everywhere on $\Sc$ for minimal 
actions, and constant Lebesgue almost everywhere on $\Lambda$ in case of 
an exceptional minimal set $\Lambda$.  
Moreover, the ergodicity in the former case and the zero Lebesgue measure of $\Lambda$ in the latter one 
remain true whenever the exponent is positive. The main problem with this approach is that 
in all the examples we know where the Lyapunov expansion exponent is positive (including those of 
$\mathrm{PSL}(2,\bbZ)$ and Thompson's group~$T$ to be recalled later), expanding 
elements appear everywhere ({\em resp.}, everywhere on~$\Lambda$). Actually, 
Corollary~\ref{c:zero-Lyap} below shows that, under certain hypothesis, positive Lyapunov 
expansion exponents imply the existence of expanding maps everywhere.  

It becomes clear from the previous discussion that a general obstacle for the 
application of the expansion strategy is the presence of non-expandable points:

\begin{definition}\label{d:ne}
A point $x\in\Sc$ is \emph{non-expandable} for the action of a subgroup 
$G \subset \mathrm{Diff}_+^1 (\mathbb{S}^1)$ if for all $g\in G$ one has $g'(x)\le 1.$ 
The set of non-expandable points is denoted by $\NE = \NE(G)$ (we omit $G$ in this notation as the group 
is usually fixed).
\end{definition}

The presence of non-expandable points is compatible with both minimality (with no invariant 
probability measure) and exceptional minimal sets for actions of finitely-generated groups. 
Such examples are known to exist in the real-analytic context, a relevant family being 
given by $\PSL(2,\bbZ)$ and other Fuchsian groups corresponding to surfaces with cusps 
(see \cite{sv-katok}). There are also the Ghys-Sergiescu smooth realizations of Thompson's 
group~$T$ (see~\cite{Ghys-Sergiescu}), yet these are only~$C^{\infty}$. We refer the reader 
to~\cite{DKN-MMJ} for a general discussion on these two families of actions. 
Nevertheless, it is worth mentioning that \emph{a posteriori}, such examples seem to be rare, 
forming a ``thin'' (though very interesting~!) ``boundary''  between two ``more stable'' types of actions,  
namely, those having an exceptional minimal set, and those that are minimal and have a  ``rich'' dynamics 
({\em e.g.} non locally discrete, that is, having local flows in their local closures; {\em c.f.} Proposition~\ref{p:affine}).

In an attempt to handle (and understand) the case where $\NE$ is nonempty and intersects the minimal set 
(this is quite rare in the minimal setting, but constructing examples with an exceptional minimal set is easier), 
the following notions were introduced in~\cite{DKN-MMJ}.

\begin{definition}\label{d:star}
A subgroup $G\subset \Diff^2_+(\Sc)$ satisfies \emph{property~\smin} if it is finitely generated, acts minimally, and for 
every $x\in\NE$ there exist $g_+,g_-$ in $G$ such that $g_+(x)=g_-(x)=x$ and $x$ is an isolated-from-the-right 
(\emph{resp.}, isolated-from-the-left) point of the set of fixed points $\Fix(g_+)$ (\emph{resp.}, $\Fix(g_-)$).
\end{definition}

\begin{definition}\label{d:lambda-star}
A subgroup $G \subset \Diff^2_+(\Sc)$ satisfies \emph{property~\sLmin} if $\Lambda$ is a Cantor set 
in $\Sc$ which is a minimal invariant set for the action of $G$ and for every $x\in\NE \cap \Lambda$ 
there exist $g_+,g_-$ in $G$ such that $g_+(x)=g_-(x)=x$ and $x$ is an isolated-from-the-right 
(\emph{resp.}, isolated-from-the-left) point of $\Fix(g_+)$ (\emph{resp.}, $\Fix(g_-)$).
\end{definition}

\begin{remark}\label{r:Dw}
Notice that for subgroups of the group $\Diff^{\omega}_+(\Sc)$ of real-analytic diffeomorphisms of the circle,  
the conditions about fixed points above are equivalent to that for all $x \in \NE$ ({\em resp.}, $x \in \NE \cap \Lambda$), 
there exists $g\in G\setminus \{\id\}$ such that $g(x)=x$. 
\end{remark}

Whenever property $\smin$ ({\em resp.}, $\sLmin$) holds, one can still show that the action is ergodic with respect 
to the Lebesgue measure ({\em resp.}, that $\Leb (\Lambda) = 0$), as well as many other interesting properties. All  
of this is summarized in the next three theorems below. Essentially, this follows from a combination of Sullivan's 
expansion strategy (performed away from non-expandable points) and classical parabolic expansion (performed 
close to these points).

\begin{theorem}[\cite{DKN-MMJ}]\label{t:DKN}
Let $G\subset \Diff^2_+(\Sc)$ be a group that satisfies property~\smin. Then:
\begin{enumerate}
\item The action of $G$ is ergodic with respect to the Lebesgue measure.
\item The set $\NE(G)$ is finite.
\item The set of points of bounded expansibility, that is, 
$$\big\{ x\in \Sc \mid \exists C: \quad \forall g\in G, \quad g'(x)\le C \big\},$$ 
coincides with the union $G(\NE)$ of orbits of points of $\NE$.
\end{enumerate}
\end{theorem}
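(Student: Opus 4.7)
I would tackle the three items in the order (2), (3), (1), since each informs the next. The core preliminary observation, used throughout, is that property~\smin\ forces every $x\in\NE$ to be \emph{parabolically} fixed by both $g_+$ and $g_-$: indeed, if $g_+'(x)<1$ then by the isolation-from-the-right hypothesis $g_+$ would contract an entire right-neighborhood of $x$ onto $x$, giving $(g_+^{-1})'(x)>1$ and contradicting $x\in\NE$; hence $g_+'(x)=g_-'(x)=1$. Combined with $C^2$-regularity (via a Szekeres/Takens-type normal form, or the Taylor expansion in the real-analytic case), this yields \emph{one-sided parabolic expansion} around $x$: some iterate of $g_+$ or of $g_+^{-1}$ has derivative strictly $>1$ at every point of a small interval $(x,x+\delta)$, and symmetrically on the left via $g_-$.

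\textbf{Parts (2) and (3).} For (2), $\NE$ is closed as the intersection $\bigcap_{g\in G}\{y:g'(y)\le 1\}$, and the parabolic expansion above rules out accumulation of non-expandable points at any $x\in\NE$ from either side; a closed and discrete subset of $\Sc$ is finite. For (3), the inclusion $G(\NE)\subseteq\{\text{bounded expansibility}\}$ is immediate from the chain rule ($x=g(y)$ with $y\in\NE$ gives $h'(x)\le 1/g'(y)$ for every $h\in G$). For the converse, set $M:=\sup_{g\in G}g'(x)$ and pick $g_n$ with $g_n'(x)\to M$ and $g_n(x)\to y$ along a subsequence. If some $h\in G$ had $h'(y)>1$, then $(hg_n)'(x)>M$ eventually, contradicting $M=\sup$; hence $y\in\NE$. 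If moreover $g_n(x)\ne y$ for infinitely many $n$, the $g_n(x)$ approach $y$ from a fixed side, and letting $\gamma$ denote whichever of $g_+,g_+^{-1}$ expands that side of $y$, I would choose $m_n\to\infty$ so that $\gamma^{m_n}(g_n(x))$ returns to a fixed distance $\delta$ from $y$; the parabolic derivative estimate then gives $(\gamma^{m_n})'(g_n(x))\to\infty$, whence $(\gamma^{m_n}g_n)'(x)\to\infty$, contradicting bounded expansibility. Therefore $g_n(x)=y$ eventually, so $x=g_n^{-1}(y)\in G(\NE)$.

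\textbf{Part (1).} I would use the classical Lebesgue-density-plus-expansion argument. Given a measurable $G$-invariant set $A\subset\Sc$ with $\Leb(A)>0$, choose a density point $x$ of $A$ and construct a sequence $\phi_n\in G$ and shrinking intervals $I_n\ni x$ such that $\phi_n|_{I_n}$ has uniformly bounded distortion while $\phi_n(I_n)$ has length bounded below. The $C^2$ Denjoy--Koebe estimate then transports the asymptotic density~$1$ of $A$ at $x$ to the images; passing to a convergent subsequence of $\phi_n(I_n)$ one obtains an interior density point of $A$, and minimality spreads this to all of $\Sc$, so $\Leb(\Sc\setminus A)=0$. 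If $x\notin\NE$, Sullivan's strategy (iterating compositions with an element of derivative $>1$ at $x$) yields the desired $(\phi_n,I_n)$. If $x\in\NE$ -- now known to be finite and parabolically fixed -- I would instead take $\phi_n$ to be large iterates of $g_+$ or $g_-$ (or their inverses) on tiny one-sided intervals at $x$, using the one-sided parabolic expansion to obtain macroscopic images.

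\textbf{Main obstacle.} The genuinely delicate ingredient is the bounded-distortion control for parabolic iterates: since the derivatives of $g_+^n$ accumulate to $1$ and orbits linger near the fixed point, the hyperbolic Denjoy/Koebe lemma does not apply directly and must be replaced by a finer estimate calibrated to the polynomial decay intrinsic to the parabolic normal form. This is also where the $C^2$ (rather than just $C^{1+\alpha}$) hypothesis becomes essential, both for extracting a usable normal form and for the integrated-cocycle distortion bounds along parabolic iterates.
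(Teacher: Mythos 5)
The paper does not prove Theorem~\ref{t:DKN}: it is quoted from the reference \cite{DKN-MMJ}, accompanied only by a one-sentence description of the argument (``a combination of Sullivan's expansion strategy, performed away from non-expandable points, and classical parabolic expansion, performed close to these points''). Your outline follows precisely that recipe, and its structure is sound: extract $g_+'(x)=g_-'(x)=1$ at every $x\in\NE$, use parabolic escape estimates to show $\NE$ is discrete (hence finite), characterize the bounded-expansibility set via parabolic blow-up of derivatives, and combine hyperbolic Sullivan expansion with parabolic escape to transport density points. A few small corrections: the identity $g_\pm'(x)=1$ follows immediately from $g_\pm(x)=x$, $g_\pm'(x)\le 1$ and $(g_\pm^{-1})'(x)=1/g_\pm'(x)\le 1$ — the isolation-from-one-side hypothesis is not used for that, though it is of course what supplies the one-sided parabolic well and the escape estimate that you then invoke. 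In part~(3), when the $g_n(x)$ approach $y$ from the left you need $g_-$ and its inverse, not $g_+$. Finally, the parabolic distortion control that you flag as the ``genuinely delicate'' point is actually no harder than the hyperbolic one: the intermediate images $\gamma^i(J)$, $i=0,\dots,m_n-1$, of a tiny interval $J$ near the parabolic point are pairwise disjoint, so their total length is bounded by the size of the parabolic fundamental cell and the $C^2$ Schwartz--Denjoy estimate applies directly. What is genuinely delicate in \cite{DKN-MMJ} is rather controlling the total distortion along the entire itinerary, which alternates between Sullivan-type expansions and parabolic escapes, and guaranteeing termination of the expansion procedure at a macroscopic scale; your sketch correctly names the ingredients but does not spell out this bookkeeping.
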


The following theorem also has a version for $C^2$ diffeomorphisms, but for simplicity 
we only refer here to the $C^{\omega}$ version. 

\begin{theorem}[\cite{FK1,FK2}]\label{t:KF}
Let $G\subset \Diff^{\omega}_+(\Sc)$ be a group satisfying property~\smin{}   
and such that $\NE(G)\neq\emptyset$. Then:
\begin{enumerate}
\item\label{i:Markov} There exists a Markov partition for the action. More precisely, there is a finite partition 
of the circle $\Sc$ minus finitely many points into open intervals $I_j$, each provided with an element 
$g_j\in G$ and a set $\mathcal{I}_j$ of indexes so that 
$$
\quad g_j(I_j) = \bigcup_{k\in \mathcal{I}_j} I_k,
$$
so that for each $j$, at most one endpoint of $I_j$ is non-expandable. Moreover, 
for any $j$, one has $g_j'|_{I_j}>1$, and the strict inequality still holds 
at the endpoints of $I_j$ except for those which are non-expandable. 
Furthermore, if $x \in \partial I_j \cap \NE$, then $g_j (x) = x$ and $g_j'(x) = 1$. 
\item\label{i:orbits} A generic $G$-orbit is a union of finitely many full orbits of the 
action of the locally non-strictly-expanding map $R$ defined by $R|_{I_j}=g_j|_{I_j}$.
\item The Lyapunov expansion exponent of $G$ vanishes Lebesgue almost-everywhere.
\end{enumerate}
\end{theorem}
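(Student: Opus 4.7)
\begin{pfof}{Theorem~\ref{t:KF} (sketch)}

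The plan is to build the Markov partition by first isolating the (finitely many) non-expandable points with parabolic sectors, and then covering the complement with uniformly expanding tiles that can be combinatorially matched.

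By Theorem~\ref{t:DKN}, $\NE = \{p_1,\dots,p_m\}$ is finite. For each $p\in\NE$, property~\smin{} together with real-analyticity (Remark~\ref{r:Dw}) yields nontrivial $g_p^{\pm}\in G$ fixing $p$, with $p$ isolated in $\Fix(g_p^{\pm})$ from the right (resp.\ left). Because $p\in\NE$ forces $(g_p^{\pm})'(p)\le 1$, while if this derivative were strictly $<1$ then the inverse would expand at $p$, contradicting non-expandability, each $g_p^{\pm}$ must be parabolic at $p$ and, by analyticity, strictly expanding on a suitable one-sided interval $J_p^{\pm}$ abutting~$p$, with no interior fixed point. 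On the compact complement $K = \Sc \setminus \bigcup_p (J_p^+\cup J_p^-)$, every point is expandable, so by a Sullivan-type compactness argument one can cover $K$ by finitely many intervals $U_\alpha$ on each of which some $h_\alpha\in G$ satisfies $h_\alpha'>1$.

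Next I refine this initial covering into an honest Markov partition $\{I_j\}$. The strategy is combinatorial: whenever the image of a tile under its designated element fails to be a union of other tiles, one cuts tiles along preimages of offending endpoints. Since the $h_\alpha$ are uniformly expanding on $K$, this refinement terminates away from $\NE$; near each $p\in\NE$ the iterates of $g_p^{\pm}$ only accumulate at $p$ itself, so one can iteratively slide the parabolic tile's outer boundary (by applying $g_p^{\pm}$ or its inverse) until it coincides with the endpoint of a neighboring expanding tile. The resulting partition then satisfies $g_j(I_j)=\bigcup_{k\in\mathcal{I}_j}I_k$ with $g_j'|_{I_j}>1$, and the only interval endpoints that can lie in $\NE$ are those at which the associated $g_j$ is the chosen parabolic element, forcing $g_j(x)=x$ and $g_j'(x)=1$ there. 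This proves part~\ref{i:Markov}. For part~\ref{i:orbits}, the piecewise map $R$ codes each $G$-orbit by its itinerary through the $I_j$'s, and a generic point (avoiding the countable $G$-orbit of the partition boundary) is determined by its itinerary; any $g\in G$ then acts on such an orbit by a prefix substitution in the symbolic dynamics, up to finitely many residual generators not subsumed by $R$, so a generic $G$-orbit is a union of finitely many full $R$-orbits. Finally, $R$ is a piecewise-analytic expanding map with parabolic periodic points precisely at the points of $\NE$, so classical intermittency theory (in the spirit of Thaler and Zweim\"uller) gives $\frac{1}{n}\log(R^n)'(x)\to 0$ Lebesgue-a.e., and this transfers to the $G$-Lyapunov exponent via part~\ref{i:orbits}.

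The hard step is the refinement in the third paragraph: arranging the partition so that parabolic tiles and uniformly expanding tiles share endpoints consistently with the symbolic dynamics. This is what compels the use of iterates of the $g_p^{\pm}$ to slide boundaries into the exact positions demanded by the Markov condition, and requires a careful bookkeeping to show that the refinement process halts after finitely many cuts despite the parabolic accumulation at $\NE$.

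\end{pfof}
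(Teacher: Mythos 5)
Your proposal addresses Theorem~\ref{t:KF}, which the paper does not actually prove: it is stated as a cited result from \cite{FK1,FK2}. There is therefore no internal proof to compare against. The paper does, however, build a Markov partition from scratch in \S\ref{s:cones} (in the service of Proposition~\ref{p:geod}), and that construction is fundamentally different from yours. It does not refine a covering. Instead it defines the partition intervals intrinsically as the connected components of the sets $\widetilde{M}_\gamma$ (points where the sum of derivatives along all geodesics beginning in a given cone stays bounded); the Markov inclusion, the parabolic first-return maps $g_\pm$ fixing endpoints with derivative $1$, and the finiteness of the partition then fall out of Lemmas~\ref{l:images}--\ref{l:finite} by distortion estimates, with no iterative bookkeeping. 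The same is true of the approach in~\cite{FK1}: the partition is obtained from a structural decomposition, not from cut-and-refine.

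The concrete gap in your sketch is the termination of the refinement process in the third paragraph. You propose to cut tiles along preimages of "offending endpoints" until the Markov condition holds, and claim termination from uniform expansion away from $\NE$ plus "sliding" the parabolic tile boundaries near $\NE$. But there is no mechanism here for the process to halt: each cut creates new endpoints, whose preimages under the designated group elements may themselves be new offending points, and uniform expansion works against termination (it produces more and more distinct preimages of any given point). Near $\NE$ the iterates of $g_p^\pm$ accumulate only at the fixed point, but that does not stop the cascade from propagating through the expanding region. You yourself flag this as "the hard step," but what you offer is a description of the difficulty, not a resolution; and since parts~\ref{i:orbits}) and 3) both rest on the partition existing, the whole argument hangs on this unclosed step. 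The rest of the sketch is sound in spirit: the parabolicity argument (that $(g_p^\pm)'(p)$ must equal $1$, since $<1$ would let the inverse expand at a non-expandable point) is exactly the reasoning the paper uses in Lemma~\ref{l:der=1}, and the intermittency heuristic for the vanishing Lyapunov exponent is the right intuition, though it too requires the Markov structure and the orbit-correspondence of part~\ref{i:orbits}) to be in place first.
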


\begin{remark} 
The notion of a Markov partition used here is weaker than the one introduced by 
J. Cantwell and L. Conlon in~\cite{CC1, CC2}: whereas ours is adapted to the expansion 
procedure,  the one in~\cite{CC1, CC2} requires additionally that the full orbits of 
$R$ coincide with those of $G$. However, the obtained partition ``almost'' satisfies 
the definition in the Cantwell-Conlon sense. Indeed, conclusion~\ref{i:orbits}) above 
says that orbits of $G$ are decomposed into at most a finite number of $R$-orbits.
\end{remark}

\begin{theorem}[\cite{DKN-MMJ}]\label{t:DKN; version exceptional}
Let $G\subset \Diff^2_+(\Sc)$ be a group that satisfies property~\sLmin. Then:
\begin{enumerate}
\item The Lebesgue measure of $\Lambda$ is zero.
\item The set $\NE(G) \cap \Lambda$ is finite.
\item The set of points in $\Lambda$ of bounded expansibility, that is, 
$$\big\{x \in \Lambda \mid \exists C: \quad \forall g\in G, \quad g'(x)\le C \big\},$$ 
coincides with the union $G(\NE) \cap \Lambda$ of orbits of points of $\NE \cap \Lambda$.
\end{enumerate}
\end{theorem}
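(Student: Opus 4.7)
The plan is to combine Sullivan's hyperbolic expansion, available away from $\NE(G)\cap\Lambda$, with a parabolic expansion procedure near each non-expandable point, driven by the elements $g_+,g_-$ furnished by property~\sLmin. The three items are interlocked, so I would prove them in the order (2), (1), (3), establishing first the geometric infrastructure that feeds all of them.

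For item~(2), fix $x\in\NE\cap\Lambda$ and let $g_+,g_-$ be as in Definition~\ref{d:lambda-star}. Since $x$ is non-expandable one has $g_+'(x)\le 1$, and applying the same inequality to $g_+^{-1}\in G$ yields $g_+'(x)\ge 1$, so $g_+'(x)=1$ and $x$ is a parabolic fixed point of $g_+$ isolated from the right; symmetrically for $g_-$ on the left. Standard $C^2$ control (Szekeres-type normal forms combined with Denjoy--Koksma distortion) on the iterates of $g_+^{\pm 1}$ yields a one-sided parabolic fundamental domain on the right of $x$ whose scales decay in a quantitatively controlled way and on which the iterates have bounded distortion; combining with $g_-$ gives a two-sided parabolic neighborhood $U_x$. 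A combinatorial argument exploiting that the translation lengths of these fundamental domains force an essentially disjoint local geometry at distinct points of $\NE\cap\Lambda$, together with compactness of $\Sc$ and minimality of $\Lambda$, then shows that $\NE\cap\Lambda$ is finite.

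The analytic core is an expansion lemma: for every $x\in\Lambda\setminus G(\NE)$ there exist elements $g_n\in G$ with $g_n'(x)\to\infty$ together with intervals $J_n\ni x$ of length comparable to $1/g_n'(x)$ such that $g_n|_{J_n}$ has uniformly bounded distortion and $|g_n(J_n)|$ is bounded below. Outside the neighborhoods $U_x$ of the previous step this is Sullivan's classical expansion strategy, exploiting $C^2$ distortion and the fact that at each expandable point some element of $G$ has derivative strictly greater than~$1$. Inside $U_x$, one applies appropriate iterates of $g_+^{-1}$ or $g_-^{-1}$ to escape $U_x$ in controlled time, using parabolic distortion estimates to govern $J_n$, and then concatenates with a Sullivan element. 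Both (1) and (3) follow from this lemma: for (1), if $\Leb(\Lambda)>0$ pick a density point of $\Lambda$ outside the countable set $G(\NE\cap\Lambda)$; bounded distortion transports near-full density from $J_n$ to the macroscopic image $g_n(J_n)$, contradicting the Cantor structure of $\Lambda$, which contains definite-sized gaps at every macroscopic scale meeting it. For (3), the inclusion $G(\NE)\cap\Lambda\subset\{\text{bounded expansibility}\}$ follows from the chain rule at parabolic fixed points, while the converse is immediate from the lemma applied at any $x\in\Lambda\setminus G(\NE)$.

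The main obstacle is the expansion lemma itself, more precisely the interaction between the two regimes: one must control how a parabolic escape from $U_x$, during which the derivative on $J_n$ remains close to $1$ over a long range of iterates, can be chained with a Sullivan hyperbolic step without destroying either the distortion constant or the comparability $|J_n|\asymp 1/g_n'(x)$. This is where the $C^2$ hypothesis, through Denjoy--Koksma distortion for iterates of a single diffeomorphism, truly ties the two regimes together, and where the bookkeeping across possibly repeated crossings of parabolic neighborhoods before a macroscopic expansion is achieved is the delicate point.
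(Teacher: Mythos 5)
The paper does not actually prove Theorem~\ref{t:DKN; version exceptional}: it is cited verbatim from~\cite{DKN-MMJ}, and the only indication of the argument given here is the sentence preceding Theorem~\ref{t:DKN}, namely that everything ``follows from a combination of Sullivan's expansion strategy (performed away from non-expandable points) and classical parabolic expansion (performed close to these points).'' Your proposal reproduces exactly that two-regime strategy --- Sullivan hyperbolic expansion away from $\NE\cap\Lambda$, parabolic expansion driven by the $g_\pm$ near $\NE\cap\Lambda$, with the finiteness of $\NE\cap\Lambda$ and the density-point argument for $\Leb(\Lambda)=0$ as downstream consequences --- so it is consistent with the approach the paper points to; a full check of the details (in particular the chaining of parabolic escapes with hyperbolic steps that you correctly flag as the delicate point) would have to be made against~\cite{DKN-MMJ} itself rather than against this paper.
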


As far as we know, in the literature, for all minimal ({\em resp.}, with an exceptional minimal set $\Lambda$) and 
sufficiently smooth actions of  finitely-generated groups on the circle, property $\smin$  ({\em resp.}, \sLmin) 
does hold. The main result of this work states that, at least under certain assumptions, this is necessarily the case.

\vspace{0.3cm}

\noindent{\bf Main Theorem.}
{\em Let $G$ be a finitely-generated subgroup of $\Diff^{\omega}_+(\Sc)$.} 
\begin{enumerate}
\item {\em If G is free of rank $\geq 2$ and acts minimally on the circle, then it satisfies property~\smin.}
\item {\em If $G$ acts on the circle with an exceptional minimal set $\Lambda$, then it satisfies property~\sLmin.}
\end{enumerate}


Recall that a celebrated theorem of Ma\~n\'e (see~\cite[Thm.~A]{Mane}) says that 
for a $C^2$ circle {\em endomorphism} that is not conjugated to an irrational rotation, 
every closed invariant set having no critical point and which is not hyperbolic must contain 
a parabolic fixed point. 
In a certain sense, our Main Theorem is an analogous of this result, though in our 
case we show that \emph{every} point of the minimal set that cannot 
be expanded is a parabolic fixed point of one of the elements of the group.

\begin{corollary}\label{c:erg}
If $G\subset \Diff^{\omega}_+(\Sc)$ is a finitely-generated free group acting 
minimally, then its action is ergodic with respect to the Lebesgue measure.
\end{corollary}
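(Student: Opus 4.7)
The plan is to deduce Corollary \ref{c:erg} directly by chaining the Main Theorem with the earlier results cited in the excerpt, handling the low-rank case separately.

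First, I would split according to the rank of the free group $G$. If $G$ has rank $0$ it is trivial and cannot act minimally on $\Sc$, so this case is vacuous. If $G$ has rank $1$, then $G = \langle f \rangle$ for a single real-analytic diffeomorphism $f$, and minimality of the cyclic action forces $f$ to have irrational rotation number. The Katok--Herman theorem stated above then immediately gives ergodicity with respect to Lebesgue measure. So the only substantive case is rank $\geq 2$.

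For $G$ free of rank $\geq 2$ acting minimally by real-analytic diffeomorphisms, I would apply part~(1) of the Main Theorem. This yields that $G$ satisfies property~\smin. Since $G \subset \Diff^{\omega}_+(\Sc) \subset \Diff^2_+(\Sc)$, Theorem~\ref{t:DKN} applies, and its first conclusion is precisely that the action of $G$ is ergodic with respect to the Lebesgue measure.

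In this sense the corollary is a pure ``packaging'' statement: there is no new dynamical obstacle to overcome beyond what is already encoded in the Main Theorem. The only minor point to be careful about is the rank~$1$ case, which is not covered by the Main Theorem's hypothesis but is dispatched by the classical Katok--Herman result. All the genuine difficulty is absorbed into the proof of the Main Theorem itself, where one must produce, for every non-expandable point, a non-trivial element of $G$ fixing it (equivalent, via Remark~\ref{r:Dw}, to the parabolic-germ condition defining property~\smin).
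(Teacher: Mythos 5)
Your proposal matches the paper's own argument exactly: rank~$1$ is handled by the Katok--Herman theorem, and rank~$\geq 2$ follows by combining part~(1) of the Main Theorem with Theorem~\ref{t:DKN}.1). No further comment is needed.
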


Indeed, for rank 1, this is Katok-Herman's theorem, whereas for higher rank, 
this follows from the Main Theorem.1) together with Theorem~\ref{t:DKN}.1).

\begin{corollary}\label{c:zero-Lyap}
Let $G\subset \Diff^{\omega}_+(\Sc)$ be a finitely-generated group that is free of rank at least~$2$. 
Assume that~$G$ acts minimally and that~$\NE(G)$ is nonempty. Then the Lyapunov expansion 
exponent vanishes Lebesgue almost everywhere.
\end{corollary}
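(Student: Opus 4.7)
The plan is to deduce the corollary as an immediate composition of two previously cited results: the Main Theorem, part 1), and Theorem~\ref{t:KF}, part 3). The hypotheses of the corollary are tailored to feed exactly into this chain, so essentially no new analysis is needed at this stage.

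First, I would verify that the hypotheses of part 1) of the Main Theorem are satisfied for $G$. By assumption, $G$ is a finitely-generated subgroup of $\Diff^{\omega}_+(\Sc)$, it is free of rank at least $2$, and it acts minimally on $\Sc$. Therefore the Main Theorem, part 1), yields that $G$ satisfies property~\smin. Next, I would invoke Theorem~\ref{t:KF}, whose two hypotheses are (i) $G \subset \Diff^{\omega}_+(\Sc)$ satisfies property~\smin, and (ii) $\NE(G) \neq \emptyset$. Condition (i) has just been established, and condition (ii) is part of the hypotheses of the corollary. Part 3) of Theorem~\ref{t:KF} then asserts that the Lyapunov expansion exponent vanishes Lebesgue almost everywhere on $\Sc$, which is precisely the conclusion sought.

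The ``main obstacle,'' in this short deduction, is not located in the argument itself but upstream, in the proof of the Main Theorem, part 1), which provides the bridge to the framework of Theorem~\ref{t:KF}. Once property~\smin{} is available, the Markov partition and the $R$-dynamics produced by Theorem~\ref{t:KF} encode the relevant asymptotic expansion of $G$, and the vanishing of the Lyapunov exponent follows from the structural description there. The structure of this corollary thus mirrors that of Corollary~\ref{c:erg}, where the Main Theorem is combined instead with Theorem~\ref{t:DKN}.1).
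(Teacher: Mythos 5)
Your proposal is correct and matches the paper's own (one-line) proof exactly: apply the Main Theorem, part 1), to obtain property~\smin{}, then feed this together with $\NE(G)\neq\emptyset$ into Theorem~\ref{t:KF}.3). Nothing further is needed.
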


Again, this follows from the Main Theorem.1) together with Theorem \ref{t:KF}.3).

\begin{corollary}\label{c:Lm}
If $G\subset \Diff^{\omega}_+(\Sc)$ is a finitely-generated group preserving an exceptional 
minimal set $\Lambda$, then $Leb(\Lambda) = 0$. 
\end{corollary}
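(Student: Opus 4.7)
The plan is to derive this corollary as a direct consequence of the Main Theorem combined with the already-established Theorem~\ref{t:DKN; version exceptional}, in complete analogy with the way Corollaries~\ref{c:erg} and~\ref{c:zero-Lyap} are obtained. In other words, no new ideas beyond the Main Theorem are needed; the entire substance of the statement is compressed into the assertion that property~\sLmin{} holds.

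More precisely, I would proceed in two steps. First, since $G$ is a finitely-generated subgroup of $\Diff^{\omega}_+(\Sc)$ admitting an exceptional minimal set $\Lambda$, part~2) of the Main Theorem applies directly and tells us that $G$ satisfies property~\sLmin. This is where all the actual work lies, but it has been stated as an assumption we are allowed to use.

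Second, note the inclusion $\Diff^{\omega}_+(\Sc)\subset \Diff^{2}_+(\Sc)$, so $G$ is in particular a subgroup of $\Diff^{2}_+(\Sc)$ satisfying property~\sLmin. Hence part~1) of Theorem~\ref{t:DKN; version exceptional} yields $\Leb(\Lambda)=0$, which is exactly the conclusion.

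The only subtlety worth flagging is that one should check the hypotheses of the Main Theorem.2) genuinely match ours: there is no algebraic freeness assumption in the exceptional minimal set case, so there is no analogue of the rank restriction that appears in the minimal case and in Corollary~\ref{c:erg}. Thus the main obstacle is not in writing this deduction -- which is essentially a one-line citation chain -- but rather, of course, in the underlying proof of the Main Theorem.2), to be carried out later in the paper.
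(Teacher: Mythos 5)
Your proposal is correct and follows exactly the same citation chain the paper uses: Main Theorem~2) gives property~\sLmin, and Theorem~\ref{t:DKN; version exceptional}.1) then yields $\Leb(\Lambda)=0$. Your remark that no freeness hypothesis is needed in the exceptional-minimal-set case is also accurate.
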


This follows from the Main Theorem.2) together with Theorem \ref{t:DKN; version exceptional}.1). 

\begin{corollary}\label{c:finite-comp}
If $G\subset \Diff^{\omega}_+(\Sc)$ is a finitely-generated group preserving an exceptional 
minimal set $\Lambda$, then $\mathbb{S}^1 \setminus \Lambda$ is the union of finitely 
many orbits of intervals.
\end{corollary}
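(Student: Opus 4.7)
The plan is to combine our Main Theorem with Theorem~\ref{t:DKN; version exceptional} and a standard expansion argument.

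By the Main Theorem part~2, $G$ satisfies property~\sLmin. Applying Theorem~\ref{t:DKN; version exceptional} then yields $\Leb(\Lambda)=0$, the finiteness of $\NE(G)\cap\Lambda$, and the identification of the set of points of bounded expansibility in $\Lambda$ with $G(\NE)\cap\Lambda$. Since $\Leb(\Lambda)=0$, the gaps (connected components of $\Sc\setminus\Lambda$) cover a set of full measure; in particular, for any $\delta>0$ there are at most finitely many gaps of length exceeding~$\delta$. Moreover, the $G$-orbit of any single gap consists of pairwise disjoint gaps (the group permutes the gaps), so their total length is bounded by~$1$. It therefore suffices to show that every $G$-orbit of gaps contains a representative of length at least some uniform $\delta>0$.

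I would first dispatch the gaps whose boundary meets $G(\NE)\cap\Lambda$. Since $\NE\cap\Lambda$ is finite, $G(\NE)\cap\Lambda$ is a union of finitely many $G$-orbits, and every point of $\Lambda$ is an endpoint of at most two gaps; hence only finitely many $G$-orbits of gaps have an endpoint of this type. For the remaining gaps $I=(a,b)$, both $a$ and $b$ lie in $\Lambda\setminus G(\NE)$ and therefore have unbounded expansibility. Here the plan is to invoke Sullivan's expansion strategy: given such a gap, choose $g\in G$ with $g'(a)$ large, tuned so that $g'(a)\cdot|I|$ is of order one, and use bounded distortion (available from the $C^{1+\alpha}$, in fact $C^{\omega}$, control) to deduce that $|g(I)|\geq\delta$ for some $\delta>0$ independent of $I$. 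Combined with the measure-theoretic observation above, this leaves only finitely many orbits of gaps of this second type, and the corollary follows.

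The hard part will be precisely the uniformity of this last step: unbounded expansibility at a single endpoint is pointwise information, whereas controlling $|g(I)|$ requires comparing the size of $g$'s derivative with the scale of $I$. The delicacy lies in choosing $g$ at the correct ``scale'' relative to $|I|$, and in ensuring that the intermediate iterates in a telescoping Koebe-type argument stay uniformly away from the (finitely many) non-expandable points, where distortion estimates degenerate. This ingredient, however, is classical in the Sullivan--Denjoy framework and is implicit in the proof of Theorem~\ref{t:DKN; version exceptional}.1; adapting it to the present situation, where the endpoints are merely assumed to lie outside the finite set $G(\NE)\cap\Lambda$, should be routine.
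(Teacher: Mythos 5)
Your reduction is sound up to the last step: by the Main Theorem together with Theorem~\ref{t:DKN; version exceptional} one does get property~\sLmin, $\Leb(\Lambda)=0$, finiteness of $\NE(G)\cap\Lambda$, and the identification of bounded-expansibility points with $G(\NE)\cap\Lambda$; since the gaps are disjoint, only finitely many have length exceeding any fixed $\delta>0$, so it does suffice to exhibit in each $G$-orbit of gaps a representative of length $\geq\delta$; and, because the group permutes gaps, the observation that only finitely many gap-orbits have an endpoint in the finitely many orbits comprising $G(\NE)\cap\Lambda$ is correct.

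The genuine gap is in what you call the ``hard part,'' and your appeal to it being ``routine'' within the Sullivan--Denjoy framework is not enough, for a reason you partly anticipate but do not resolve. Removing the gap-orbits whose endpoints lie on $G(\NE)\cap\Lambda$ does \emph{not} put the remaining gaps at a uniform distance from non-expandable points: a gap $I=(a,b)$ with $a,b\notin G(\NE)$ can sit arbitrarily close to some $y\in\NE$, and then the element $g$ you want, with $g'(a)\cdot|I|\asymp 1$ and bounded distortion on $I$, cannot be produced in one step of the hyperbolic expansion strategy. One must first use a \emph{parabolic} escape near $y$ (iterating the element of $G$ fixing $y$ until the orbit leaves a fixed neighborhood of $y$, with derivative controlled from below by control of distortion over the union of fundamental domains), and only then switch to a hyperbolic expansion. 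Without this two-regime argument your ``choose $g$ at the right scale'' step has no content, and the claim that the needed ingredient is ``implicit in the proof of Theorem~\ref{t:DKN; version exceptional}.1'' is more a promissory note than a proof.

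The paper's proof is organized precisely to avoid the delicate scale-tuning you defer. It fixes finitely many parabolic neighborhoods $U_i$ of the non-expandable points, each equipped with an element $g_i$ whose escape iterate has derivative $\geq 2$, and a finite covering of $\Lambda\setminus\bigcup U_i$ by hyperbolic neighborhoods $V_i$, each equipped with an element expanding by a factor $\geq e^{\lambda}$. Only finitely many gaps $I_1,\dots,I_k$ fail to lie in a single $U_i$ or $V_i$, and every other gap is driven into $\{I_1,\dots,I_k\}$ by iterating bounded expansions, never needing a single ``Koebe'' element adapted to $|I|$. This iterative version both sidesteps the scale-matching problem and makes your preliminary dispatch of the $G(\NE)$-adjacent gaps unnecessary. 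In short: your outline is in the right spirit, but the step you label ``routine'' is where the actual work lies, and the paper handles it by a different, more self-contained mechanism.
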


This corollary corresponds to an analog of the famous Ahlfors' finiteness theorem~\cite{ahlfors},   
and as we already mentioned, it answers in the affirmative a question of G. Hector. Its proof 
requires more discussion, hence it is postponed to~\S{}\ref{s:modifications}, where we also 
further develop on the proof of the preceding corollary.


\subsection{Sketch of the proof and plan of the article}

Throughout the proof, we will constantly use classical tools of control of distortion which 
we briefly recall in \S~\ref{s:prelim-1}. In most cases, we will use them without details 
(these are left to the reader), except whenever some precise and explicit estimate is needed. 

In their original formulation, control of distortion techniques go back to the works of A. Denjoy 
\cite{denjoy}, R. Schwartz \cite{Sc} and R. Sacksteder \cite{Sa}. These consist of tools that 
allow to compare a composition of ``simple'' $C^2$-diffeomorphisms, restricted to some 
``small'' interval, to the corresponding affine map, provided that the sum of the lengths of 
the intermediate images of this interval is not too big. A nice view of this was later proposed 
by Sullivan \cite{Sullivan}, who cleverly noticed that the ``right hypothesis'' is the existence of 
an upper bound for the sum of intermediate derivatives at \emph{a single} point~$y$. In concrete terms, 
knowing that this sum does not exceed some constant $S$, then in a neighborhood of radius $\sim 1/S$ 
of $y$, the composition looks like an affine map (despite the number of the involved compositions could 
be very large !). We will also use certain results on composition of one-dimensional holomorphic maps, 
more specifically the commutators arguments and the vector fields technique. Both 
are recalled in \S~\ref{s:prelim-2}.

We then proceed to the proof of the Main Theorem. We will mainly focus on item~1), as 2) will follow with some 
minor adjustments to be commented in \S~\ref{s:modifications}. 
\emr{We assume that there is a non-expandable point $x_0 \in \Sc$. Our goal is to show that it is fixed by some nontrivial 
element of~$G$. To do this, let~$\mathcal{G}$ be the canonical systems of generators of $G$ (together with their inverses), 
and let $B(n)$ be the ball of radius $n$ centered at the identity:
$$
B(n):=\{g_k\circ \cdots \circ g_1 \mid \forall j \!:  g_j\in \mathcal{G}, \hspace{0.1cm} k\le n\}.
$$
Denote the sum of the derivatives of the elements of this ball at $x_0$ by $S_n$:
$$
S_n := \sum_{g\in B(n)} g'(x_0).
$$}\noindent \emr{The proof of the Main Theorem will then be obtained from the following two 
statements:} 

\vspace{0.1cm}

\noindent {\bf (A)} \emr{If the sums $S_n$ grow faster than linearly, then the point $x_0$ is stabilized by a nontrivial element.  
(This is  Proposition~\ref{p:linear} in~\S{}\ref{s:proof}.)} 

\vspace{0.1cm}

\noindent {\bf (B)} 
\emr{If they grow slower than exponentially, then one can construct a Markov partition ``by hands'' 
and directly deduce property $(\star)$ from it.  (This is Theorem~\ref{t:exp}  in~\S{}\ref{s:growth}.)} 

\vspace{0.1cm}

\noindent \emr{Clearly, these two statements together prove the Main Theorem.}

\vspace{0.1cm}

 
\emr{Let us sketch the proofs of these statements. To prove Proposition~\ref{p:linear}, we proceed by contradiction.  
We assume that $x_0$ is not fixed by any nontrivial $g\in G$, and we consider all the images of $x_0$ by the elements 
of $B(n)$. Let $x_n := f_n(x) \neq x_0$ be the closest point to it among those of the form $f(x_0)$, where $f \in B(n)$. 
We first deduce from the superlinear growth assumption that the length of the 
interval~$I_n:=[x_0,x_n)$ decreases as $o(1/n)$. Indeed, the images $g(I_n)$ as $g$ ranges over $B([n/2])$  
are pairwise disjoint (see Lemma~\ref{l:intersection}). A control of distortion argument then 
provides us with the desired estimate $|I_n|\le \frac{\const}{S_{[n/2]}} = o(1/n)$; � see Lemma~\ref{l:conv-int}.
} 

\emr{Now, another control of distortion argument together with the non-expandability of the point $x_0$ allow us to conclude 
that the maps $f_n$  look closer and closer to the identity in some $o(1/n)$-neighborhood of $x_0$ (see 
Lemma~\ref{l:id}). We then find two consecutive non-commuting maps $f_n$, $f_{n+1}$ (see 
Lemma~\ref{l:free-gen}), and apply Ghys' commutator technique to obtain a sequence of commutators that  
converges to the identity on some interval. Finally, the Shcherbakov-Nakai-Loray-Rebelo technique applied
to this sequence allows us to find local flows in the local closure of the group~$G$ (see Proposition~\ref{p:affine}), 
and this contradicts the presence of non-expandable points (compare Remark~\ref{r:NE-C1}). 
}

\emr{The proof of Theorem~\ref{t:exp} is more technical. A standard argument implies 
that for each point $x\in\Sc$ there is a geodesic path $\gamma$ in $G$, starting with the identity, 
along which the sum of the derivatives of the maps at~$x$ is arbitrarily large (see Proposition~\ref{p:geodesic-free}).
Assume now that the above property 
can be strengthened as follows: not only at any point~$x$ we can find geodesic paths with large sums of intermediate 
derivatives, but also we can find such geodesic paths starting with any prescribed generator. 
The first step in the proof of Theorem~\ref{t:exp} is 
to deduce from this assumption the desired exponential 
growth for the sums~$S_n$. To do this, we use a ``growing trees'' argument (see~\S{}\ref{s:growth}).
Namely, we inductively construct ``tree-like'' sets $\Gamma_m$ of  
\hspace{0.1cm} diameter $\le \const \cdot m$ \hspace{0.1cm} with 
an exponentially growing sum of the associated derivatives. 
This is done by replacing at each step the points of a previously 
constructed set by finite geodesic paths going into ``free cones'' at these points.}

\emr{Finally, the hardest part of the proof (which involves several combinatorial arguments 
that are particular to the free group) consists in showing that the failure of the strengthened 
property above implies property~$(\star)$. (This is  Proposition~\ref{p:geod}, 
proved in~\S{}\ref{s:cones}.) To do this, the main idea is the following: if the sums of the derivatives 
at some point along the geodesics  starting at some generator are uniformly bounded, then we have a 
uniform control of distortion for the maps in this cone on some neighborhood of this point. We then consider 
the maximal domains where such control holds (to be more precise, where boundedness of sums of derivatives holds), and 
show (see Lemmas~\ref{l:images},~\ref{l:disj},~\ref{l:lr-r},~\ref{l:finite}) that they form a Markov partition for the action of~$G$. 
Finally, the presence of a Markov partition allows us to obtain the property~$(\star)$ ``by hands''; in particular, the fact that Markovian 
maps become expanding at the interior of the intervals of the partition is established by Corollary~\ref{c:needed}.}

More technical issues and outcomes of the proof (see {\em e.g.} Remark \ref{r:rare}) will be 
considered along the text.

\subsection{What's next?}\label{s:next}

The proof of our Main Theorem uses both the facts that the acting group is free and that the action is by 
real-analytic diffeomorphisms. Also, for the exceptional minimal case, we use Ghys' theorem stating that a finitely 
generated group of real-analytic circle diffeomorphisms having an exceptional minimal set is virtually free. Thus, 
to apply our approach to the case of codimension-one foliations (or to the pseudo-group context), additional 
modifications would be required.

Despite this, a careful study of the instances when the assumptions (freeness, analyticity, group and not a 
pseudo-group) are used shows that most of these situations can be passed assuming weaker assumptions (though 
with more technical difficulties). This section is devoted to the possible generalizations: we list the instances 
where our assumptions have been used, and the possible lines of improvement.
\begin{itemize}
\item \emph{Analyticity} of the group action is used in Ghys' commutator argument. Though, convergence to the identity of a chain of commutators can be established also via (more technically complicated) $C^2$ (and even $C^{1+\eps}$) arguments (this was independently noticed by J. Rebelo and A. Eskif~\cite{ER} and by the authors).
\item The \emph{(virtual) freeness} of the group is used to establish the lower bound~\eqref{eq:to-prove} for the sums of the derivatives, as well as to ensure that in a certain chain of commutators, the maps do not eventually become the identity. For the first part ({\em c.f.} Proposition \ref{p:geod}), it seems that at least the statement itself can be translated into the language of ends of the acting group or of ends of the orbit of the pseudo-group. Indeed, the cones defined in \S~\ref{s:cones} are the (infinite) connected components of the complement of a ball (consisting in this case of a single point). Some generalizations in this direction have been already made in~\cite{A-etal}. On the other hand, the case of finitely-presented groups with one end has been studied in~\cite{FK-spheres}.
\item In the exceptional minimal case, we use both assumptions: we deal with a \emph{group} (and not only with a pseudogroup or a foliation) and the action is by real-analytic diffeomorphisms (this allows us applying Ghys' virtual freeness theorem). However, in the general $C^2$ pseudogroup context, we still have Duminy's theorem concerning infinitely many ends for semiproper leafs, which allows the arguments cited earlier have a good chance to work.
\item Finally, the \emph{analyticity} assumption is used two more times. One is to ensure that a fixed point of a non-identity map is an isolated fixed point (while obtaining property~$(\star)$). Here, it seems quite plausible that this assumption can be weakened (with the help of some dynamical arguments). The other one relies on Loray-Rebelo-Nakai-Shcherbakov's argument  on local flows in the closure of locally non-discrete (pseudo)groups ({\em c.f.} Proposition \ref{p:affine}). Again, it seems that the assumption here can be weakened. In fact, the main case of this argument (namely, that of an hyperbolic fixed point that is not fixed by maps converging to the identity) works even in $C^2$-regularity. The case left seems to be quite restricting, so it is highly probable that one can generalize the arguments also for this case.
\end{itemize}


\section{Preliminaries}\label{s:prelim}

\subsection{Control of distortion estimates}
\label{s:prelim-1}

We begin by recalling several lemmas concerning control of distortion which are classical in the context 
of smooth one-dimensional dynamics. A more detailed discussion with (references to the) proofs may 
be found in~\cite{DKN-MMJ}.

\vspace{0.1cm}

\begin{definition} Given two intervals $I,J$ and a $C^1$ map $g \!: I\to J$ which is a
diffeomorphism onto its image, we define the \emph{distortion coefficient} of $g$ on~$I$ by
$$
\varkappa (g;I) := \sup_{x,y \in I} \left| \log \Big( \frac{g'(x)}{g'(y)} \Big) \right|.
$$
\end{definition}

\vspace{0.1cm}

The distortion coefficient is subadditive under composition, that is 
$$\varkappa(fg,I) \leq \varkappa (g,I) + \varkappa (f, g(I)).$$ 
Moreover,  it satisfies 
\begin{equation}\label{eq:inverses}
\varkappa (g,I) = \varkappa (g^{-1},g(I)),
\end{equation}
as well as $\varkappa(g,I)\le C_{\{g\}} |I|,$ where the constant $C_{\{g\}}$ depends only on the
$\Diff^2$-norm of~$g$ (indeed, one can take $C_{\{g\}}$ as being the maximum of the absolute
value of the derivative of the function $\log(g')$). This immediately implies the following

\vspace{0.1cm}

\begin{proposition}\label{p:sum}
Let $\mathcal{G}$ be a subset of $\Diff^2_+(\Sc)$ that is bounded with respect to the
$\Diff^2$-norm. If $I$ is an interval of the circle and $g_1,\dots,g_n$ are finitely
many elements chosen from $\mathcal{G}$, then
$$
\varkappa(g_n \circ \cdots \circ g_1;I) \le
C_{\mathcal{G}} \sum_{i=0}^{n-1} |g_i \circ \cdots \circ g_1 (I)|,
$$
where the constant $C_{\mathcal{G}}$ depends only on~$\mathcal{G}$. 
(Here, $g_i \circ \cdots \circ g_1$ is the identity for $i = 0$.)
\end{proposition}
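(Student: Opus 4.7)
The plan is to exploit the two properties of the distortion coefficient that are already recalled in the text just above the proposition: (i) subadditivity under composition, namely $\varkappa(h\circ g;I)\le \varkappa(g;I)+\varkappa(h;g(I))$, and (ii) the single-map bound $\varkappa(g;J)\le C_{\{g\}}|J|$, where one may take $C_{\{g\}}=\sup|(\log g')'|$. Both are pointed out by the authors as consequences of the definition, so the proposition will reduce to a telescoping argument plus a uniformity observation on $\mathcal{G}$.

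I would introduce the partial compositions $F_i := g_i\circ\cdots\circ g_1$ for $1\le i\le n$, with the convention $F_0=\mathrm{id}$, so that $F_n = g_n\circ F_{n-1}$. Iterating subadditivity, I would prove by induction on $n$ that
$$
\varkappa(F_n;I) \;\le\; \sum_{i=1}^{n}\varkappa\bigl(g_i;F_{i-1}(I)\bigr),
$$
the base case $n=1$ being trivial and the inductive step being $\varkappa(F_n;I)=\varkappa(g_n\circ F_{n-1};I)\le \varkappa(F_{n-1};I)+\varkappa(g_n;F_{n-1}(I))$.

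Next I would apply the single-map bound term by term: for every $i$,
$$
\varkappa\bigl(g_i;F_{i-1}(I)\bigr)\;\le\; C_{\{g_i\}}\,\bigl|F_{i-1}(I)\bigr|.
$$
Because $\mathcal{G}$ is bounded in the $\Diff^2$-norm, the quantities $C_{\{g\}}=\sup|g''/g'|$ for $g\in\mathcal{G}$ are uniformly bounded by some constant $C_{\mathcal{G}}$ depending only on $\mathcal{G}$. Summing then yields
$$
\varkappa(F_n;I)\;\le\; C_{\mathcal{G}}\sum_{i=1}^{n}\bigl|F_{i-1}(I)\bigr|\;=\;C_{\mathcal{G}}\sum_{i=0}^{n-1}\bigl|g_i\circ\cdots\circ g_1(I)\bigr|,
$$
which is exactly the claimed inequality.

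I do not foresee any real obstacle: the statement is essentially a repackaging of the two facts recalled immediately above the proposition. The only point requiring minimal care is the uniformity of the constant $C_{\mathcal{G}}$, which follows at once from the explicit expression $C_{\{g\}}=\sup|(\log g')'|$ together with the hypothesis that $\mathcal{G}$ is $\Diff^2$-bounded (so $g'$ stays bounded away from zero and $g''$ stays bounded on all of $\mathcal{G}$).
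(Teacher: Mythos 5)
Your proof is correct and follows exactly the line the paper intends: iterate the subadditivity of the distortion coefficient and then apply the single-map bound $\varkappa(g;J)\le C_{\{g\}}|J|$ with the uniformity of $C_{\{g\}}$ over the $\Diff^2$-bounded set $\mathcal{G}$. The paper gives no further argument (it simply states that the two preceding facts "immediately imply" the proposition), and your induction is precisely the telescoping they have in mind.
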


\vspace{0.1cm}

An almost direct consequence of this proposition is the next

\vspace{0.1cm}

\begin{corollary}\label{cor:estimates}
Under the assumptions of Proposition~\ref{p:sum}, let us fix a point $x_0 \in I$,
and let us denote $f_i := g_i \circ \cdots \circ g_1$, $I_i := f_i(I)$, and
$x_i := f_i(x_0).$ Then the following inequalities hold:
\begin{equation}
\exp \Big(\!-C_{\mathcal{G}}\sum_{j=0}^{i-1} |I_j| \Big) \cdot \frac{|I_i|}{|I|} \le f_i'(x_0)
\le \exp \Big( C_{\mathcal{G}}\sum_{j=0}^{i-1} |I_j| \Big) \cdot \frac{|I_i|}{|I|} ,
\end{equation}
\begin{equation}\label{eq:lsum}
\sum_{i=0}^n |I_i| \le  |I|
\exp \Big( C_{\mathcal{G}} \sum_{i=0}^{n-1} |I_i| \Big) \sum_{i=0}^{n} f_i'(x_0).
\end{equation}
\end{corollary}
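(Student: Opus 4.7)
The plan is to derive both inequalities directly from the distortion estimate supplied by Proposition~\ref{p:sum}, i.e.\ from
$$
\varkappa(f_i;I) \;=\; \varkappa(g_i\circ\cdots\circ g_1;I) \;\le\; C_{\mathcal{G}}\sum_{j=0}^{i-1}|I_j|.
$$
By the very definition of the distortion coefficient, this bound means that for any two points $x,y\in I$ one has $e^{-\varkappa(f_i;I)}\le f_i'(x)/f_i'(y)\le e^{\varkappa(f_i;I)}$.

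For the first inequality, I would apply the mean value theorem to $f_i$ on the interval $I$: there exists some $\xi_i\in I$ such that $f_i'(\xi_i)=|I_i|/|I|$. Writing
$$
f_i'(x_0) \;=\; \frac{f_i'(x_0)}{f_i'(\xi_i)}\cdot \frac{|I_i|}{|I|},
$$
and sandwiching the ratio between $\exp(-C_{\mathcal{G}}\sum_{j=0}^{i-1}|I_j|)$ and $\exp(C_{\mathcal{G}}\sum_{j=0}^{i-1}|I_j|)$ by means of the distortion bound, yields the displayed two-sided estimate for $f_i'(x_0)$. (For $i=0$ the convention that $f_0=\mathrm{id}$ makes both inequalities the trivial identity $|I|/|I|=1$.)

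For the second inequality, I would use the upper half of the first estimate, rewritten as
$$
|I_i| \;\le\; |I|\,\exp\!\Big(C_{\mathcal{G}}\sum_{j=0}^{i-1}|I_j|\Big)\,f_i'(x_0).
$$
Since the partial sum $\sum_{j=0}^{i-1}|I_j|$ is nondecreasing in $i$ and is bounded above by $\sum_{j=0}^{n-1}|I_j|$ for every $i\le n$, the exponential factor can be replaced by $\exp(C_{\mathcal{G}}\sum_{j=0}^{n-1}|I_j|)$, which is the same for all $i$. Summing the resulting inequalities over $i=0,\dots,n$ and pulling out the common factor then gives exactly \eqref{eq:lsum}.

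There is no real obstacle here: once Proposition~\ref{p:sum} is in hand, the only substantive ingredient is the mean value theorem, and the rest is a line of algebra together with the elementary observation that the partial sums $\sum_{j=0}^{i-1}|I_j|$ are monotone in $i$. The only point requiring a moment of care is the treatment of the index $i=0$ (where the summation is empty and $f_0$ is understood as the identity), which is handled by the convention stated in Proposition~\ref{p:sum}.
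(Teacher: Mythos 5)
Your argument is correct and is precisely the routine derivation that the paper leaves implicit (the paper states the corollary as ``an almost direct consequence'' of Proposition~\ref{p:sum} and gives no written proof). The mean value theorem plus the distortion bound yields the two-sided estimate, and the monotonicity of the partial sums $\sum_{j=0}^{i-1}|I_j|$ in $i$ lets one pull out a single exponential factor before summing; all of that is exactly right. One small terminological slip: the rearranged inequality
$|I_i|\le |I|\exp\!\big(C_{\mathcal{G}}\sum_{j=0}^{i-1}|I_j|\big)f_i'(x_0)$
comes from the \emph{lower} bound $f_i'(x_0)\ge \exp(-\cdots)\,|I_i|/|I|$ in the first display, not from the upper bound as you wrote; the inequality itself and the rest of the argument are correct as stated.
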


\vspace{0.1cm}

Using this corollary, an inductive argument allows showing the following important 

\vspace{0.1cm}

\begin{proposition}\label{bound}
Under the assumptions of Proposition~\ref{p:sum}, given a point
$x_0\in \Sc$, let us denote \, $S := \sum_{i=0}^{n-1}
f_i'(x_0)$. \, Then for every \, $\delta \leq \log(2) / 2 C_{\mathcal{G}} S $, \, 
one has \,
$$\varkappa(f_n,U_{\delta/2}(x_0)) \le 2 C_{\mathcal{G}} S \delta,$$
where $U_{\delta/2}(x_0)$ denotes the $\delta/2$-neighborhood of $x_0$.
\end{proposition}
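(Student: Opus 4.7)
The plan is to reduce the distortion estimate to a length estimate on the intermediate images and then run a short bootstrap induction. Set $I:=U_{\delta/2}(x_0)$, so $|I|=\delta$, and write $f_n = g_n\circ\cdots\circ g_1$ with each $g_i\in \mathcal{G}$. Denote as usual $f_i:=g_i\circ\cdots\circ g_1$ and $I_i:=f_i(I)$. By Proposition~\ref{p:sum},
$$\varkappa(f_n,I)\le C_{\mathcal{G}}\sum_{i=0}^{n-1}|I_i|,$$
so it suffices to prove the length bound $\sum_{i=0}^{n-1}|I_i|\le 2\delta S$, after which the desired distortion estimate $\varkappa(f_n,I)\le 2C_{\mathcal{G}} S\delta$ follows at once.

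I would establish the length bound by induction on $m\in\{0,1,\dots,n\}$, proving the slightly stronger partial-sum statement
$$\sum_{i=0}^{m-1}|I_i|\;\le\; 2\delta\sum_{i=0}^{m-1} f_i'(x_0).$$
The base case $m=0$ is vacuous. For the inductive step, combining the inductive hypothesis with the standing assumption $2C_{\mathcal{G}} S\delta\le\log 2$ gives
$$C_{\mathcal{G}}\sum_{i=0}^{m-1}|I_i|\;\le\;2C_{\mathcal{G}}\delta\sum_{i=0}^{m-1}f_i'(x_0)\;\le\;2C_{\mathcal{G}}\delta S\;\le\;\log 2,$$
so $\exp\!\bigl(C_{\mathcal{G}}\sum_{i<m}|I_i|\bigr)\le 2$. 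Substituting this into the left-hand inequality of Corollary~\ref{cor:estimates}, rewritten as $|I_m|\le |I|\,\exp\!\bigl(C_{\mathcal{G}}\sum_{i<m}|I_i|\bigr)\,f_m'(x_0)$, yields $|I_m|\le 2\delta f_m'(x_0)$. Adding this to the inductive hypothesis closes the induction; specializing to $m=n$ produces $\sum_{i=0}^{n-1}|I_i|\le 2\delta S$, as required.

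The only delicate point is that the hypothesis $\delta\le\log(2)/(2C_{\mathcal{G}} S)$ is precisely calibrated to keep the distortion factor $\exp\!\bigl(C_{\mathcal{G}}\sum_{i<m}|I_i|\bigr)$ bounded by $2$ at every stage of the bootstrap, so that the induction does not diverge. Apart from this calibration, the proof is a mechanical consequence of Proposition~\ref{p:sum} and Corollary~\ref{cor:estimates}, and in particular no additional hypothesis on the $C^2$-norm beyond that already built into $C_{\mathcal{G}}$ is required.
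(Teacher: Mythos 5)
Your proof is correct and follows essentially the same bootstrap as the paper's: reduce via Proposition~\ref{p:sum} to a length estimate, then run an induction whose inductive hypothesis keeps $\exp\bigl(C_{\mathcal{G}}\sum_{i<m}|I_i|\bigr)\le 2$ thanks to the choice of $\delta$. The only cosmetic difference is that you bound each $|I_m|$ directly from the first inequality of Corollary~\ref{cor:estimates} and then sum, proving the slightly sharper partial-sum statement $\sum_{i<m}|I_i|\le 2\delta\sum_{i<m}f_i'(x_0)$, whereas the paper invokes the already-summed inequality~\eqref{eq:lsum}; both are instances of the same corollary, and the calibration argument is identical.
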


\begin{proof} 
By Proposition~\ref{p:sum}, it suffices to check that
$$
\sum_{i=0}^{n-1} |f_i(I)| \le 2\delta S,
$$
where $I := U_{\delta/2} (x_0)$. To do this, we proceed by induction on~$n$.
As $f_0$ is the identity and $|I|=\delta$, for 
$n = 1$ we have $S = 1$ and 
$$
|I|=\delta\le 2\delta S,
$$
hence the claim holds. Assume it holds for some $n$, and let us check it for $n+1$. 
Then by \eqref{eq:lsum} we have
\begin{multline*}
\sum_{i=0}^n |f_i(I)| \le \delta \cdot \exp \left( C_{\mathcal{G}}\cdot  \sum_{i=0}^{n-1} |f_i(I)| \right) \cdot \sum_{i=0}^n f_i'(x_0) \le \\ \le  \exp (2C_{\mathcal{G}} S \delta) \cdot \delta S \le \exp(\log 2) \cdot \delta S = 2\delta S,
\end{multline*}
where the last inequality is a consequence of the choice of~$\delta$. This closes the proof.
\end{proof}

\vspace{0.1cm}

As an application of the previous discussion, \emr{we show}

\begin{proposition}\label{p:geodesic-free}
Let $G$ be a finitely-generated free group of $C^2$ circle diffeomorphisms acting minimally, and let $\mathcal{G}$ be the set of its (standard) generators together with their inverses. Then for every point~$x$ of the circle, one can find elements (finite geodesics)  
$g = \gamma_n\cdots\gamma_1$, with $\gamma_i\in \mathcal{G}$ and $\gamma_i\neq \gamma_{i+1}^{-1}$ 
for each $i < n$, with arbitrarily large sum of intermediate derivatives at~$x$.
\end{proposition}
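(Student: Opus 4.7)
The plan is to argue by contradiction. Suppose there exist $x \in \Sc$ and a constant $S < \infty$ such that every finite geodesic $g = \gamma_n \cdots \gamma_1$ in $G$ satisfies $\sum_{i=0}^n f_i'(x) \le S$, where $f_i = \gamma_i \cdots \gamma_1$. Since each element of the free group $G$ is realized as the endpoint of its unique reduced representative, this in particular forces $g'(x) \le S$ for every $g \in G$. Applying Proposition~\ref{bound}, I obtain $\delta > 0$ such that every $g \in G$ satisfies $\varkappa(g, U_{\delta/2}(x)) \le \log 2$, which yields the two-sided control $|g(U_{\delta/2}(x))| \in [\tfrac12 g'(x)\delta,\; 2 g'(x)\delta]$ for all $g$.

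The next step is to locate a \emph{hyperbolic} element $g_0 \in G$, meaning $g_0(p)=p$ with $\lambda := g_0'(p) > 1$ (after possibly replacing $g_0$ by $g_0^{-1}$). When $G$ is free of rank~$\ge 2$, no $G$-invariant probability measure on $\Sc$ can exist: such a measure, having full support by minimality, would conjugate the action to a group of rotations, forcing $G$ to be abelian. A Sacksteder-type theorem for minimal actions without invariant probability measure --- the version proved in~\cite{DKN} --- then delivers such a $g_0$. (The case of rank~$1$ is handled separately: minimality and Denjoy's theorem realize $G$ as a topological conjugate of an irrational rotation, and the conclusion follows from a direct ergodic-theoretic argument.) Up to replacing $g_0$ by a conjugate in~$G$, I may assume $g_0$ is cyclically reduced, so that the word representing $g_0^n$ is itself reduced of length $n|g_0|$ for every $n$.

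By minimality, I choose $h \in G$ with $y := h(x)$ arbitrarily close to $p$; by appending one generator if needed (which moves $y$ only slightly), I arrange that the concatenation $g_0^n h$ is a reduced word, hence a genuine geodesic. Among the intermediate partial products of this geodesic are $g_0^i h$ for $0 \le i \le n$, whose derivatives at $x$ equal $(g_0^i h)'(x) = (g_0^i)'(y)\, h'(x)$. For $y$ chosen close enough to $p$ (depending on $n$), uniform continuity of the $(g_0^i)'$ near the fixed point gives $(g_0^i)'(y) \ge \lambda^i/2$ throughout $0 \le i \le n$, so
\[
\sum_{i=0}^n (g_0^i h)'(x) \;\ge\; \frac{h'(x)}{2}\cdot \frac{\lambda^{n+1}-1}{\lambda - 1} \;\xrightarrow[n\to\infty]{}\; \infty,
\]
contradicting the assumption that the geodesic sum is bounded by $S$.

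The main obstacle is the second step: establishing the existence of a hyperbolic element for an arbitrary minimal action of a finitely-generated free group. This is classical when an exceptional minimal set is present (the original Sacksteder theorem), but extending it to general minimal actions --- where no Cantor set structure is available --- requires the strengthening given in~\cite{DKN}. The remaining pieces of the argument (distortion control via Proposition~\ref{bound}, cyclic reduction of $g_0$, the combinatorial adjustment of $h$ to keep $g_0^n h$ reduced, and the exponential growth of the derivatives along the orbit of $p$) are routine.
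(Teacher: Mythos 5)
Your opening steps match the paper's: assume bounded geodesic sums at $x$, invoke Proposition~\ref{bound} to get distortion control on a fixed neighborhood $U_{\delta/2}(x)$, observe that minimality plus freeness of rank $\geq 2$ forbids an invariant probability measure, and then invoke the extension of Sacksteder's theorem from \cite{DKN} to get a hyperbolic element. From that point on you diverge from the paper, and the divergence introduces a real gap.

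The gap is in the final limit. To make $(g_0^i)'(y)\ge \lambda^i/2$ hold for all $0\le i\le n$, you must take $y=h(x)$ closer and closer to $p$ as $n$ grows, since for any \emph{fixed} $y\ne p$ the iterates $g_0^i(y)$ eventually escape the linearizing neighborhood of the repelling point and $(g_0^i)'(y)$ stops growing geometrically (indeed $\sum_i (g_0^i)'(y)$ is finite for fixed $y\neq p$ once the orbit heads toward the attracting set). So $h$ must be replaced by a sequence $h_n$ with $h_n(x)\to p$. But then nothing in your argument gives a lower bound on $h_n'(x)$; you only have the \emph{upper} bound $h_n'(x)\le S$ inherited from the hypothesis. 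The displayed quantity $\tfrac{h_n'(x)}{2}\cdot\tfrac{\lambda^{n+1}-1}{\lambda-1}$ may therefore stay bounded, and the claimed contradiction does not follow. (A secondary, smaller concern: you say you can ``append one generator'' to $h$ to make $g_0^n h$ reduced while moving $y$ ``only slightly'' --- composing with a generator can move $y$ by a macroscopic amount, so this step also needs justification.) Notably, the distortion estimate you establish at the start is never used afterward; it is exactly the ingredient that would let one sidestep the difficulty.

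The paper avoids both problems by a simpler route: instead of moving $x$ toward the hyperbolic fixed point, it places the fixed point next to $x$. By minimality, the hyperbolic element $f$ from \cite{DKN} can be chosen (after conjugation) so that its repelling fixed point $y_0$ lies inside $U_{\delta/2}(x)$. Replacing $f$ by a high iterate makes $f'(y_0)$ as large as desired; the uniform distortion bound $\varkappa(f,U_{\delta/2}(x))\le C$ (which you had, but did not use) then propagates this to $f'>1/\delta$ everywhere on $U_{\delta/2}(x)$, whence $|f(U_{\delta/2}(x))|>1$, impossible on the circle. This needs no auxiliary map $h$, no tracking of geodesic word structure, and no lower bound on any derivative other than the one supplied by Sacksteder. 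If you prefer to keep your structure, the fix is the same: choose $p\in U_{\delta/2}(x)$ directly rather than transporting $x$ to $p$ via a sequence of uncontrolled elements.

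Two minor remarks. First, the reduction you perform (cyclically reducing $g_0$) changes the location of the fixed point by the conjugating word, so if one does want $p$ near $x$, the cyclic reduction must be done before locating $p$, not after. Second, your parenthetical treatment of the rank-one case is consistent with the fact that the paper's Sacksteder-based argument also does not cover rank one (an abelian group can preserve a measure); the proposition is only used in the paper for rank $\geq 2$, where $\NE\ne\emptyset$ is possible.
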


\begin{proof} Otherwise, due to Proposition~\ref{bound}, for a sufficiently small $\delta > 0$, the distortion on the 
neighborhood $U_{\delta/2}(x)$ of all elements in the group would be uniformly bounded, say by a constant $C > 0$. 
Now since the action of $G$ cannot preserve a probability measure (otherwise the group would be conjugate to a 
group of rotations, hence Abelian; see \cite[Lemma 4.1.8]{Navas-Es}), the extension of Sacksteder's 
theorem of \cite{DKN} yields an element $f \in G$ with an hyperbolically repelling fixed point $y_0 \in U_{\delta/2}$. 
Up to changing $f$ by some iterate if necessary, we may assume that $f'(y_0) > \frac{1}{\delta e^{C}}$. The upper 
bound for the distortion then yields $f'(y) > \frac{1}{\delta}$ for all $y \in U_{\delta/2}$. However, this is 
impossible, as it would imply that the image of $U_{\delta/2}$ under $f$ is larger than the whole circle.
\end{proof}

We will also need a ``complex version'' of Proposition~\ref{bound} (with the obvious extensions 
of definitions), the proof of which is analogous to that of the classical one and is left to the reader. 

\vspace{0.1cm}

\begin{proposition}\label{bound-complex}
Suppose that $\mathcal{G}$ is a finite subset of $\mathrm{Diff}_+^{\omega} (\mathbb{S}^1)$. 
Then there exists a constant $\rho > 0$ depending only on $\mathcal{G}$ such that the statement 
of Proposition \ref{bound} holds provided we add the condition $\delta \leq \rho$. More precisely, 
for a certain  constant $C_{\mathcal{G}} > 0$ and any point $x_0\in \Sc$, if we denote 
$$S := \sum_{i=0}^{n-1} f_i'(x_0),$$ 
where $f_k := g_k \circ \cdots \circ g_1$, $g_i \in \mathcal{G}$, then for every \, 
$\delta \leq \min \{ \log(2) / 2 C_{\mathcal{G}} S , \rho \}$, \, one has \,
$$\varkappa \big( f_n,U_{\delta/2}^{\mathbb{C}}(x_0) \big) \le 2 C_{\mathcal{G}} S \delta,$$
where $U_{\delta/2}^{\mathbb{C}}(x_0)$ denotes the complex $\delta/2$-neighborhood of $x_0$.
\end{proposition}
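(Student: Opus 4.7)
The plan is to adapt the inductive proof of Proposition~\ref{bound} to the complex setting. The overall structure (subadditivity of distortion under composition, inequality \eqref{eq:lsum}, induction on $n$) carries over essentially verbatim once the holomorphic extensions are set up; the only genuinely new task is to keep the images $f_i(U_{\delta/2}^{\mathbb{C}}(x_0))$ inside a complex strip where compositions are defined, and this runs concurrently with the distortion estimate as a simultaneous induction.

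First I would fix the complex framework. Since $\mathcal{G}$ is a finite set of real-analytic circle diffeomorphisms, every $g \in \mathcal{G}$ extends to a univalent holomorphic map on some complex neighborhood of $\Sc$; by finiteness of $\mathcal{G}$ there is a common $\rho_0 > 0$ and a common neighborhood $U_{2\rho_0}^{\mathbb{C}}$ on which all extensions are defined and uniformly bounded in $C^1$-norm. In particular, $|(\log g')'|$ is uniformly bounded on $U_{\rho_0}^{\mathbb{C}}$, which gives a complex constant $C'_{\mathcal{G}}$ such that $\varkappa(g,V) \le C'_{\mathcal{G}}\cdot\mathrm{diam}(V)$ for any connected $V \subset U_{\rho_0}^{\mathbb{C}}$ (the complex distortion $\varkappa$ being defined via a consistent branch of $\log(g'(z)/g'(w))$, which is unambiguous once $V$ is small enough that $g'$ stays in a cut plane). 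Both the subadditivity of $\varkappa$ under composition and the complex analogue of Corollary~\ref{cor:estimates} then follow formally, provided that each intermediate image lies in $U_{\rho_0}^{\mathbb{C}}$.

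Next I would set the constants. Choose $\rho := \rho_0$ (so that the initial disk $U_{\delta/2}^{\mathbb{C}}(x_0) \subset U_{\rho_0}^{\mathbb{C}}$ whenever $\delta \le \rho$), and take $C_{\mathcal{G}} := \max\{C'_{\mathcal{G}},\, \log 2/\rho_0\}$. Then I would run the same induction on $n$ as in the real case, but proving the two statements simultaneously:
\begin{enumerate}
\item[(i)] $\sum_{i=0}^{n-1} |f_i(U_{\delta/2}^{\mathbb{C}}(x_0))| \le 2\delta S$, and
\item[(ii)] $f_i(U_{\delta/2}^{\mathbb{C}}(x_0)) \subset U_{\rho_0}^{\mathbb{C}}$ for every $i \le n$.
\end{enumerate}
The step (i) is proved from Proposition~\ref{p:sum} and inequality \eqref{eq:lsum} exactly as in the real case, using (ii) to justify the compositions. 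For (ii), bounded distortion on $U_{\delta/2}^{\mathbb{C}}(x_0)$ (say $\varkappa \le \log 2$, as guaranteed by (i)) gives $|f_i'(z)| \le 2 f_i'(x_0)$ on this disk, hence $|f_i(z) - f_i(x_0)| \le 2 f_i'(x_0)\cdot\delta/2 = f_i'(x_0)\delta$. Combining $f_i'(x_0) \le S$ with the hypothesis $\delta \le \log 2/(2 C_{\mathcal{G}} S)$ yields $f_i'(x_0)\delta \le \log 2/(2 C_{\mathcal{G}}) \le \rho_0/2$, so that $f_i(U_{\delta/2}^{\mathbb{C}}(x_0))$ is contained in a complex disk of radius $\rho_0/2$ centered at the real point $f_i(x_0) \in \Sc$, and therefore lies in $U_{\rho_0}^{\mathbb{C}}$.

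The main obstacle, as indicated above, is the circularity of the simultaneous induction: the distortion estimate (i) uses subadditivity of $\varkappa$ under composition, which requires each $f_i(U_{\delta/2}^{\mathbb{C}}(x_0))$ to lie in the strip of analyticity, while (ii) in turn depends on the bounded distortion furnished by (i). Dissolving this circularity requires carefully interleaving the two steps — at each $i$, one first uses the bound (i) up to index $i-1$ to derive the distortion bound and then (ii) at index $i$, before applying $g_{i+1}$ and advancing the induction — and this is exactly where the choice $C_{\mathcal{G}} \ge \log 2/\rho_0$ becomes essential to absorb the containment condition into the same hypothesis $\delta \le \log 2/(2 C_{\mathcal{G}} S)$ as in the real statement.
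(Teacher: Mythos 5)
Your proof is correct and carries out precisely the program the paper indicates when it says the proof is ``analogous to that of the classical one and is left to the reader'': you run the same induction as in Proposition~\ref{bound}, and you correctly identify that the one genuinely new point is keeping the intermediate images inside the strip of analyticity, which you handle by the simultaneous induction on (i) and (ii) together with the choice $C_{\mathcal{G}} \ge \log 2/\rho_0$ (note that this inflation is actually what does the work, since the $\rho$-cap alone cannot control $f_i'(x_0)\delta$ when $S$ is large). One small remark: in the containment step it is cleanest to note that $\varkappa(f_i,U_{\delta/2}^{\mathbb{C}}(x_0))\le\log 2$ bounds $\bigl|\log|f_i'(z)/f_i'(x_0)|\bigr|$ (taking real parts), giving $|f_i'(z)|\le 2f_i'(x_0)$ on the disk, and then integrating along the radius; with that spelled out the argument is complete.
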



\subsection{Commutators and the vector fields technique}
\label{s:prelim-2}

The next two results will be crucial to deal with maps that behave like translations on some intervals.

\begin{proposition}[Ghys~\mbox{\cite[Prop.~2.7]{Ghys}}]\label{p:to-id}
There exists $\varepsilon_0>0$ with the following property. Assume that the analytic local diffeomorphisms  
$f_1,f_2:U_1^{\bbC}(0)\to \bbC$ are $\varepsilon_0$-close (in the $C^0$ topology) to the identity, 
and let the sequence $f_k$ be defined by the recurrence relation 
$$
f_{k+2}=[f_k,f_{k+1}], \quad k=1,2,3,\dots 
$$
Then all the maps $f_k$ are defined on the disc $U_{1/2}^{\bbC}(0)$ of radius~$1/2$, 
and $f_k$ converges to the identity in the $C^1$ topology on $U_{1/2}^{\bbC}(0)$.
\end{proposition}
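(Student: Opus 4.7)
The plan is to exploit the classical fact that the commutator of two holomorphic maps close to the identity is quadratically smaller than either, and to iterate this along the Fibonacci-type recursion. I would write $f_k = \id + \varphi_k$ and track $\varepsilon_k := \sup_{z \in U_{R_k}^{\bbC}(0)} |\varphi_k(z)|$ for a carefully chosen decreasing sequence of radii $R_k \searrow 1/2$. The two essential tools are Cauchy's integral formula, which gives $\|\varphi'\|_{U_{R'}^{\bbC}(0)} \leq C \|\varphi\|_{U_R^{\bbC}(0)}/(R-R')^2$ whenever $R'<R$, and the Taylor expansion of the commutator.

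The heart of the argument is the following local estimate: if $f = \id + \varphi$ and $g = \id + \psi$ are holomorphic on $U_R^{\bbC}(0)$ with $\|\varphi\|_R$ and $\|\psi\|_R$ both bounded by some small $\eta$ (small enough that compositions map $U_{R'}^{\bbC}(0)$ back into $U_R^{\bbC}(0)$ and inverses exist), then
$$
\sup_{z \in U_{R'}^{\bbC}(0)} \bigl|[f,g](z) - z\bigr| \;\leq\; \frac{C}{(R-R')^2} \, \|\varphi\|_R \, \|\psi\|_R.
$$
To derive this I would invert $f^{-1}(z) = z - \varphi(z) + O(\|\varphi\|^2)$ (and similarly $g^{-1}$) on a slightly smaller disk, then expand $f(g(f^{-1}(g^{-1}(z)))) - z$ term by term. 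The first-order contributions $\pm\varphi$ and $\pm\psi$ cancel in pairs, and the leading surviving term is the Lie-bracket-type quantity $\varphi'(z)\psi(z) - \psi'(z)\varphi(z)$, which is bounded by the right-hand side via the Cauchy derivative estimate; the remaining terms are cubic or higher in $\varphi,\psi$ and are absorbed into the constant.

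With this estimate in hand, I would set $R_k := 1/2 + 2^{-k}$, so that consecutive gaps $R_{k+1} - R_{k+2}$ are of order $2^{-k}$. Feeding the recursion $f_{k+2} = [f_k, f_{k+1}]$ into the commutator estimate yields $\varepsilon_{k+2} \leq C \cdot 4^{k} \, \varepsilon_k \, \varepsilon_{k+1}$, and taking $\beta_k := -\log\varepsilon_k$ turns this into $\beta_{k+2} \geq \beta_k + \beta_{k+1} - O(k)$; hence $\beta_k$ dominates a shifted Fibonacci sequence, the polynomial loss $O(k)$ being negligible against exponential growth $\beta_k \sim \phi^k$. Thus, once $\varepsilon_0$ is chosen smaller than some explicit absolute constant, all iterates remain well-defined and $\varepsilon_k \to 0$ super-exponentially fast. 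The $C^0$ convergence on $U_{R_k}^{\bbC}(0)$ upgrades to $C^1$ convergence on $U_{1/2}^{\bbC}(0)$ by one final Cauchy estimate. The main obstacle I expect is precisely the domain bookkeeping: each commutator costs some of the disk both through composition (one needs the image of $g^{-1}$ to lie in the domain of $f^{-1}$, etc.) and through the Cauchy derivative step, and one must verify that the cumulative loss remains strictly below the $1/2$ margin --- which is exactly what forces the summable choice $R_k - R_{k+1} = 2^{-(k+1)}$.
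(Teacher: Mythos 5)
The paper does not prove this proposition; it is imported verbatim from Ghys (Proposition~2.7 of~\cite{Ghys}), so there is no in-paper argument to compare against. Your sketch is a faithful reconstruction of Ghys's original argument --- quadratic gain from the commutator because the linear terms cancel and the leading term is the Lie bracket $\varphi'\psi - \psi'\varphi$, a summable sequence of shrinking radii $R_k = 1/2 + 2^{-k}$, and Fibonacci-type domination of $-\log\varepsilon_k$ over the polynomial loss coming from the Cauchy estimates --- so your approach is the standard one. One cosmetic slip: Cauchy's inequality for the \emph{first} derivative gives $\|\varphi'\|_{R'} \le \|\varphi\|_{R}/(R-R')$, a single power of the gap rather than the square you wrote; this merely replaces $4^k$ by $2^k$ in your recursion and does not affect the conclusion, since any exponential-in-$k$ loss is still negligible against the double-exponential decay of $\varepsilon_k$.
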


The following proposition is in the spirit of results by A. Shcherbakov \cite{EISV}, I. Nakai~\cite{Nakai}, 
J. Rebelo~\cite{Rebelo3,Rebelo2}, and F. Loray and J. Rebelo~\cite{Loray-Rebelo}. However, though it 
seems to be well-known to specialists, the statement doesn't appear in the form below in the literature. 
For the reader's convenience, we provide a proof plus a short discussion. 

\vspace{0.1cm}

\begin{proposition}
\label{p:affine}
Let~$I$ be an interval on which certain real-analytic nontrivial diffeomorphisms $f_k\in G$ are defined. 
Suppose that $f_k$ converges to the identity in the $C^{\omega}$ topology on $I$, and let $f$ be 
another $C^{\omega}$ 
diffeomorphism having an hyperbolic fixed point on $I$. Then there exists a (local) $C^1$ 
change of coordinates $\varphi:I_0 \to [-1,2]$ on some subinterval $I_0\subset I$ after which the 
pseudo-group $G$ generated by the $f_k$'s and $f$ contains in its $C^1([0,1],[-1,2])$-closure 
a (local) translation subgroup:
$$\overline{ \{\varphi \circ g \circ \varphi^{-1}|_{[0,1]} \mid g\in G \} } 
\supset \{ x\mapsto x+s \mid s\in [-1,1]\}.$$
\end{proposition}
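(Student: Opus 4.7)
The plan is to follow the classical Shcherbakov--Nakai--Rebelo--Loray ``vector field'' strategy: after linearizing $f$ at its hyperbolic fixed point, large negative iterates of $f$ uniformly expand a tiny neighborhood of that point onto a fixed-size interval, and conjugation by these expansions turns the vanishing perturbations $f_k - \id$ into maps close to translations of a definite size. Concretely, let $p \in I$ denote the hyperbolic fixed point of $f$ and $\lambda := f'(p)$; after possibly replacing $f$ by $f^{-1}$, I may assume $0 < |\lambda| < 1$. By Koenigs' linearization theorem (valid in the real-analytic category), there is a real-analytic change of coordinates $\varphi_0$ on a neighborhood of $p$ conjugating $f$ to $z \mapsto \lambda z$. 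I fix $\delta > 0$ small enough that this linearization is valid on a disc much larger than $[-2\delta, 2\delta]$, and set $\varphi(x) := \varphi_0(x)/\delta$ on $I_0 := \varphi_0^{-1}([-\delta, 2\delta])$, so that $\varphi \colon I_0 \to [-1, 2]$ is a real-analytic (and in particular $C^1$) change of coordinates. In the new coordinate $z$, the map $f$ still acts as $z \mapsto \lambda z$ on $[-1, 2]$, while each $f_k$ acts as $z \mapsto z + h_k(z)$ with $h_k \to 0$ in $C^\omega$ on $[-1, 2]$.

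The central computation is that, in these coordinates, the conjugate $F_{k,n} := f^{-n} \circ f_k \circ f^n$ reads
$$F_{k,n}(z) \;=\; z + \lambda^{-n} h_k(\lambda^n z) \;=\; z + \lambda^{-n} c_k + a_k z + O_k(\lambda^n),$$
uniformly for $z \in [-1, 2]$, where $c_k := h_k(0)$ and $a_k := h_k'(0)$ both tend to $0$ as $k \to \infty$. The crucial point is that the ``shift'' $c_k$ gets multiplied by the huge factor $\lambda^{-n}$, while the ``dilation'' $a_k$ and the quadratic and higher errors are either left unchanged or suppressed by $\lambda^n$. Hence for any target $s \in [-1, 1]$, if I can pick sequences $k_j, n_j \to \infty$ with $\lambda^{-n_j} c_{k_j} \to s$, then $F_{k_j, n_j}$ converges to $z \mapsto z + s$ in the $C^1([0, 1], [-1, 2])$-topology, placing this translation in the required $C^1$-closure.

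To realize every $s \in [-1, 1]$ this way, I would use that the set of achievable $s$ is automatically $C^1$-closed, so density in $[-1, 1]$ suffices: the values $\{|\lambda|^{-n}|c_k|\}_{k, n}$ form geometric progressions of ratio $|\lambda|^{-1} > 1$ whose starting points tend to $0$, and under mild genericity of $(c_k)$ they fill $(0, \infty)$ densely; the sign is handled by using $f_k^{-1}$ in place of $f_k$. The principal obstacle is the degenerate case in which all the $f_k$ fix $p$, so $c_k \equiv 0$ and the amplification trivializes. In that case I would substitute the sequence $(f_k)$ by commutators such as $[f_k, f]$, or by the iterated-commutator sequence of Proposition \ref{p:to-id} applied to a non-commuting pair from the original sequence: these still converge to the identity but generically do not fix $p$, restoring a non-trivial $c_k$. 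The automatic vanishing of the dilation term $a_k z$ in the $C^1$-limit, by contrast, follows immediately from the $C^1$-convergence $f_k \to \id$ and causes no additional difficulty.
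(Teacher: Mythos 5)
Your treatment of the case where infinitely many $f_k$ move $p$ is essentially the Shcherbakov--Nakai--Rebelo--Loray argument, which is what the paper invokes (by citing \cite{Rebelo3}), so that part is fine; and the ``genericity'' language is a red herring --- one does not need it, since for each small target $s>0$ one can force $\lambda^{-n_k}c_k$ into $[\,|\lambda|s,s\,]$, and the set of achievable translations, being closed under composition and conjugation by $f$, then forces density --- but this is only a cosmetic issue.

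The genuine gap is in your handling of the degenerate case where $f_k(p)=p$ for all large $k$. You propose to escape it by replacing $(f_k)$ with the commutators $[f_k,f]$ (or the iterated commutators of Proposition~\ref{p:to-id} applied to a pair of the $f_k$'s). But this cannot work: if $f_k(p)=p$ and $f(p)=p$, then $[f_k,f](p)=p$ as well, and similarly any iterated commutator built from maps fixing $p$ still fixes $p$. So the new sequence has $c_k\equiv 0$ just as the old one did, and the amplification trick produces nothing. What is actually needed in that case is a different argument. The paper splits it further: if $f$ fails to commute with infinitely many $f_k$, one appeals to Nakai's theorem~\cite{Nakai} on the closure of non-solvable germ groups; if $f$ commutes with all but finitely many $f_k$, then (in the Koenigs coordinate) each such $f_k$ commutes with $z\mapsto\lambda z$ and fixes $0$, hence is linear, $f_k(z)=\mu_k z$ with $\mu_k\to 1$, and passing to logarithmic coordinates turns these into translations by $\log\mu_k\to 0$, which together with the translation by $\log\lambda$ coming from $f$ give the desired dense translation flow. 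Your proof as written never reaches either of these two sub-arguments, and the commutator substitution you suggest in their place provably fails to move $p$.
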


\begin{proof} By the Poincar\'e linearization theorem, we may assume that 
$f$ is affine on a  neighborhood of the hyperbolic fixed point $p$. If $f_k (p) \neq p$ 
for infinitely many $k$, then the claim follows from \cite[Proposition 3.1]{Rebelo3}. 
Assume $f_k (p) = p$ for all but finitely many~$k$. If $f$ does not commute with 
infinitely many $f_k$, then the claim follows from \cite[Section 3]{Nakai}. Otherwise, 
for all but finitely many $k$ we have that $f_k$ is affine around $p$, with multiplier 
converging to 1. The corresponding affine flow is hence contained in the closure of 
$G$ when restricted to the neighborhood; taking logarithmic coordinates, this 
becomes the desired translation flow.
\end{proof}

\begin{remark}\label{r:C1}
In the proposition above, one may relax the convergence of $f_k$ to the identity to hold 
only in class $C^1$. We sketch the proof for this case since it will have some relevance 
further on (see Remark~\ref{r:NE-C1}).  
\end{remark}

\begin{proof}[Sketch of the proof] 
Again, we will assume that $f$ is affine on a $\varepsilon$-neighborhood of the hyperbolic fixed 
point $p = 0$, say $f(x) = \lambda x$ for $x \in [-\varepsilon,\varepsilon]$, with $\lambda > 1$.  

If $f_k (0) = 0$ holds for infinitely many $k$, then we can still apply the above arguments 
(namely, Nakai's result \cite{Nakai} in case of noncommuting $f,f_k$, and the affine flow 
argument in case of commuting elements). Otherwise, we may follow an argument 
of~\cite{Rebelo3}. Namely,  fix a positive 
$\delta < \varepsilon / \lambda$, 
and let $g_k := f^{n_k} f_k f^{-n_k}$, where $n_k$ is to be defined. Since $f$ 
is linear on $[-\varepsilon,\varepsilon]$, for large-enough $k$ we have 
$$\sup \{ |g_k' (x) - 1| \!: x \in [-\varepsilon,\varepsilon] \} 
= \sup \{ |f_k' - 1| \!: x \in  [-\varepsilon/\lambda^{n_k},\varepsilon/\lambda^{n_k}] \}.$$
Now, for the choice $n_k := \left[ \log_{\lambda} \left( \frac{\delta}{|f_k (0)|} \right) \right]$, 
for all $k$ we have 
$\frac{\delta}{\lambda} < \big| g_k(0) \big| \leq \delta.$ 
Hence, for a certain subsequence of $g_k$, we get the $C^1$ convergence to a translation 
(by a certain $\pm t$, where $t \in [\delta/\lambda,\delta]$). 
As $\delta$ can be chosen arbitrarily small, the $C^1$ local closure of the group contains 
arbitrarily small translations.
\end{proof}


\section{Proof of the theorem}

\subsection{Exponential growth estimates}\label{s:growth}

In all what follows, unless otherwise explicitly stated, we assume 
that $G \subset \mathrm{Diff}_+^{\omega}(\mathbb{S}^1)$ is a free group in finitely many 
(though at least two) generators. Let us denote by $\mathcal{G}$ its standard generating system 
(in which we include the inverses of all the generators). For simplicity, whenever 
we write an element $g \in G$ in the form $g = \gamma_n \cdots \gamma_1$, we will implicitly assume that this is 
its reduced expression, that is, each $\gamma_i$ belongs to $\mathcal{G}$ and $\gamma_{i+1} \neq \gamma_i^{-1}$. 
We will also think of these expressions as {\em reduced words} or {\em geodesics}. Since generators are composed 
(multiplied) from right to left, we will call a {\em suffix} of $g$ an expression of the form $\gamma_n \cdots 
\gamma_k$, $1 \leq k \leq n$, while an expression like $\gamma_{k} \cdots \gamma_1$ will be a {\em prefix} of $g$.

Given $g = \gamma_n \cdots \gamma_1$ in $G$, 
we will call the {\em cone based at $g$} the set $\mC_g$ of elements of the 
form $\overline{g} g$, where $\overline{g} = \overline{\gamma}_{m} \cdots \overline{\gamma}_1$ 
satisfies $\overline{\gamma}_1 \neq \gamma_n^{-1}$. (Notice that $g$ does not belong to $\mC_g$.)

This paragraph is devoted to the proof of the exponential growth for the sum of the derivatives:

\begin{theorem}\label{t:exp}
Let $G \subset \Diff^{\omega}_+(\Sc)$ be a finitely-generated free group acting minimally. 
Then $G$ satisfies property \smin, or there exist positive constants $c, \lambda$ 
such that for all $x\in\Sc$ and all $n \geq 1$, 
\begin{equation}\label{exp-sums}
\sum_{g\in B(n)} g'(x)\ge c \hspace{0.03cm}e^{\lambda n}.
\end{equation}
\end{theorem}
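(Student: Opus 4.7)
The plan is to reduce Theorem~\ref{t:exp} to a technical Proposition~\ref{p:geod} whose statement and proof I defer to~\S\ref{s:cones}. Morally, Proposition~\ref{p:geod} asserts that, in the cone structure of the free group, derivatives at any point acquire a uniform multiplicative gain after extensions of bounded length: there exist constants $\ell \in \bbN$ and $\beta > 1$ such that, for every $x \in \Sc$ and every $g \in G$,
$$
\sum_{\substack{h \in \mathcal{C}_g \\ |h| = |g| + \ell}} h'(x) \;\geq\; \beta \, g'(x).
$$
Writing $h = \bar g\, g$ with $|\bar g| = \ell$ and using the chain rule, this is equivalent to a uniform lower bound of the form $\sum_{\bar g} \bar g'(g(x)) \geq \beta$ over the admissible initial segments $\bar g$. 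The delicate analytic point, addressed in~\S\ref{s:cones} via control of distortion and the commutators / parabolic expansion machinery of~\S\ref{s:prelim-2}, is that the failure of this conclusion produces a point $y \in \NE$ at which nontrivial elements fix $y$ in the appropriate one-sided way, therefore forcing property~\smin\ (and, as a bonus, the Markov partition described in Theorem~\ref{t:KF}).

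Granting the proposition, the derivation of the theorem is a clean tree-counting argument based on the fact that reduced words in a free group are unique. The sphere of radius $n+\ell$ decomposes disjointly as
$$
\{\, h \in G : |h| = n + \ell \,\} \;=\; \bigsqcup_{|g'| = n} \bigl\{\, h \in \mathcal{C}_{g'} : |h| = n + \ell \,\bigr\},
$$
since the rightmost $n$ letters of $h$ determine the base $g'$ and the left factor $\bar g = h g'^{-1}$ is forced to satisfy the non-cancellation condition defining $\mathcal{C}_{g'}$. Introducing the sphere sum $T_n(x) := \sum_{|g|=n} g'(x)$ and applying Proposition~\ref{p:geod} at each $g'$ of length~$n$ yields
$$
T_{n+\ell}(x) \;=\; \sum_{|g'|=n} \sum_{\substack{h \in \mathcal{C}_{g'} \\ |h| = n + \ell}} h'(x) \;\geq\; \beta \sum_{|g'|=n} g'(x) \;=\; \beta\, T_n(x).
$$
Iterating from $T_0(x) = 1$ gives $T_{k\ell}(x) \geq \beta^{k}$, hence $\sum_{g \in B(k\ell)} g'(x) \geq T_{k\ell}(x) \geq \beta^{k}$. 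Interpolating between consecutive multiples of~$\ell$ (using that $T_{n+1}(x)$ is bounded below by a constant times $T_n(x)$, by an elementary distortion argument) yields the announced estimate with $\lambda := \tfrac{\log \beta}{\ell}$ and a suitable~$c$.

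The main obstacle is, of course, the extraction of the multiplicative factor $\beta > 1$, which is where the free-group hypothesis really bites: the cone $\mathcal{C}_g \cap B(|g|+\ell)$ contains on the order of $(2d-1)^{\ell-1}(2d-2)$ reduced extensions, and one must show that their derivatives at $g(x)$ cannot all conspire to remain small unless the local dynamics around $g(x)$ admits a parabolic-like fixed point of a nontrivial element — which is precisely the \smin\ alternative. The combinatorial arguments needed to make this dichotomy quantitative, together with the distortion and complex-analytic tools recalled in~\S\ref{s:prelim}, constitute the technical heart of the paper and are carried out in~\S\ref{s:cones}; in the present section I take Proposition~\ref{p:geod} for granted and only record its combinatorial consequence for the sum of derivatives.
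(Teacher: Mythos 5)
The reduction to Proposition~\ref{p:geod} is the right idea, and your interpolation step (from $T_{k\ell}$ to the ball estimate~\eqref{exp-sums}) is sound. But there is a genuine gap in how you state and then invoke the technical proposition. Your ``moral'' reformulation asserts the existence of $\ell\in\bbN$ and $\beta>1$ with $\sum_{\bar g}\bar g'(y)\geq\beta$, the sum ranging over reduced words $\bar g$ of \emph{exactly} length $\ell$ admissible for the given direction --- a uniform lower bound on a \emph{sphere} sum at a fixed level. The actual Proposition~\ref{p:geod} proved in \S\ref{s:cones} is of a different nature: for each $x$ and each $\gamma\in\mathcal{G}$ it produces a \emph{single} geodesic $g=\gamma_k\cdots\gamma_1\in\mathcal{C}_\gamma$ with $\sum_{j=1}^k(\gamma_j\cdots\gamma_1)'(x)>2$, a sum of \emph{intermediate} derivatives along the prefixes of one reduced word, and these prefixes live at \emph{varying} depths $1,\dots,k$ (with $k$ bounded only after a compactness argument). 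This does not yield a lower bound on the level-$\ell$ sphere sum: to push a large prefix derivative at level $j<\ell$ up to level $\ell$ one must multiply by the derivative of the filler word, which can be as small as $c^{\ell-j}$ with $c=\min_{\gamma\in\mathcal{G}}\min_{\Sc}\gamma'<1$, and there is no way to absorb that loss. So your key inequality $T_{n+\ell}(x)\geq\beta\,T_n(x)$ rests on a statement that neither follows from Proposition~\ref{p:geod} nor is proved anywhere.

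The paper avoids precisely this difficulty by abandoning spheres and building a ``growing tree'' $\Gamma_m\subset B(mL)$ of \emph{variable} depth: to each $g\in\Gamma_m$ one appends \emph{all the prefixes} $\bar\gamma_j\cdots\bar\gamma_1$ of a suitable geodesic taken from the finite compactness set $\mathcal{F}$, the bound $\sum_j(\bar\gamma_j\cdots\bar\gamma_1)'(g(x))>2$ doubles the total weight, and disjointness of the designated cones guarantees no double counting. Since all that is controlled is $\Gamma_m\subset B(mL)$, one gets directly $\sum_{g\in B(n)}g'(x)\geq 2^{[n/L]}$, which is a ball bound rather than a sphere bound --- and a ball bound is all the theorem asks for. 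If you want to keep your cleaner ``sphere decomposition'' argument you would have to supply a proof of your strengthened proposition; as written, your derivation has a missing step at its core.
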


We will deduce this result from the following proposition, the proof of which we postpone untill \S{}\ref{s:cones}:

\begin{proposition}\label{p:geod}
Let $G \subset \Diff^{\omega}_+(\Sc)$ be a finitely-generated free group acting minimally. 
Then $G$ satisfies property~\smin, or for all $x\in\Sc$ and 
all $\gamma \in \mathcal{G}$, there exists $g=\gamma_k\cdots \gamma_1$ in $\mC_{\gamma}$ satisfying
$$
\sum_{j=1}^k (\gamma_j\cdots \gamma_1)'(x) > 2.
$$
\end{proposition}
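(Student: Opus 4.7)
I would proceed by contrapositive: assume there exist $x_0 \in \Sc$ and $\gamma_0 \in \mathcal{G}$ such that for every $g = \gamma_k \cdots \gamma_1 \in \mC_{\gamma_0}$ one has
$$\sum_{j=1}^k (\gamma_j \cdots \gamma_1)'(x_0) \leq 2,$$
and aim to deduce property~$\smin$.

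The first step is to extract analytic control. Proposition~\ref{bound} produces a uniform $\delta_0 > 0$ such that the distortion $\varkappa(g; U_{\delta_0/2}(x_0))$ is bounded by a universal constant for every $g \in \mC_{\gamma_0}$; Corollary~\ref{cor:estimates} then forces each image $g(U_{\delta_0/2}(x_0))$ to have length comparable to $g'(x_0) \cdot \delta_0$. The second step is to harvest fixed points from the cone. For any cyclically reduced word $w = w_m \cdots w_1$ with $w_1 = \gamma_0$, all positive powers $w^n$ lie in $\mC_{\gamma_0}$, and the prefix-sum bound yields $\sum_{n \geq 1}(w^n)'(x_0) \leq 2$. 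Summability of derivatives together with bounded distortion forces $w^n(x_0)$ to converge, and the limit $\alpha_w$ must, by real-analyticity, be an isolated fixed point of $w$ with $w'(\alpha_w) \leq 1$. In particular $\gamma_0^n(x_0) \to \alpha_{\gamma_0}$, a parabolic or attracting fixed point of $\gamma_0$.

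I would then argue that this rich family of fixed points, together with minimality, exhausts the non-expandable set. Since the $G$-action is minimal, the orbit of $\alpha_{\gamma_0}$ is dense, and each translate $h(\alpha_{\gamma_0})$ is a fixed point of $h \gamma_0 h^{-1}$; hence fixed points of nontrivial elements of $G$ are dense in $\Sc$. Combined with the uniform distortion control on the cone and a Sacksteder-type argument from~\cite{DKN} (as in the proof of Proposition~\ref{p:geodesic-free}), one can assemble a Markov partition of $\Sc$ in the sense of Theorem~\ref{t:KF}, whose fundamental intervals have endpoints that are either hyperbolic repellers---supplied from outside $\mC_{\gamma_0}$, i.e.\ from words beginning with $\gamma_0^{-1}$, where no boundedness is assumed---or fixed points of nontrivial elements of $G$. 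Any $x \in \NE$ must then coincide with a parabolic endpoint of this partition, since otherwise the associated expanding map would produce a sequence of elements in $B(n) \cdot x$ with derivatives growing without bound. Via Remark~\ref{r:Dw}, this is precisely the fixed-point property required for $\smin$.

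The main obstacle will be the combinatorial construction of the Markov partition. The cone $\mC_{\gamma_0}$ generates a tree of images in $\Sc$ that can overlap in intricate ways; one has to select a finite collection of fundamental intervals whose $\mC_{\gamma_0}$-images tile $\Sc$ with at most linearly growing multiplicity, and whose boundary behaviour matches the dichotomy (hyperbolic repeller versus parabolic fixed point) of Theorem~\ref{t:KF}. A secondary difficulty is manufacturing the hyperbolic repellers from the complementary cone $\mC_{\gamma_0^{-1}}$, where no boundedness is available: Sacksteder's theorem has to be invoked with care, ruling out the configuration in which every expanding orbit eventually reenters $\mC_{\gamma_0}$, which would contradict the uniform distortion bound obtained in the first step.
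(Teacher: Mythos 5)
Your setup is right (pass to the contrapositive, so that the cone-sums at some $x_0$ for some $\gamma_0$ are uniformly bounded, and extract uniform distortion control on the cone via Proposition~\ref{bound}), and your target picture --- a Markov-type expanding structure whose only non-expanded points are parabolic fixed points --- is indeed what the actual proof builds. But between these two ends there is a genuine gap, which you yourself flag as ``the main obstacle'': you never identify parabolic elements \emph{at} the non-expandable points. Density of fixed points (your translates $h(\alpha_{\gamma_0})$) buys nothing, since property~\smin{} requires every $x\in\NE$ to be itself fixed by a nontrivial element, and your points $\alpha_w=\lim_n w^n(x_0)$ are a priori unrelated to $\NE$. (A smaller issue: $w^n(x_0)$ need not converge, as $w$ may have nonzero rational rotation number; one must pass to $w^{qn}$.)

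The mechanism that fills this gap in the paper is absent from your sketch. One introduces, for each $\gamma\in\mathcal{G}$, the function $S_\gamma(y)$ (supremum of cone-sums at $y$) and the open sets $M_\gamma=\{S_\gamma<\infty\}$ and $\widetilde{M}_\gamma=\bigcap_{\widetilde\gamma\neq\gamma^{-1}}M_{\widetilde\gamma}$; your hypothesis makes one of them nonempty, and elementary cone combinatorics (Lemma~\ref{l:images}) makes all of them nonempty, pairwise disjoint and Markov-compatible. The crucial step is then Lemma~\ref{l:ends}: for a connected component $I$ of $\widetilde{M}_\gamma$, among the finitely many first-return words $g$ with $|g(I)|\geq \tfrac12 e^{-C_3}|I|$ there must be one fixing each endpoint of $I$. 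This is proved by an induction on word length showing that otherwise the cone-sum would be finite \emph{at the endpoint itself}, contradicting the maximality of $I$ as a component of an open set. That is precisely how parabolic elements ($g_\pm'(x_\pm)=1$, Lemma~\ref{l:der=1}) appear exactly where expansion fails; finiteness of the components (Lemma~\ref{l:finite}) and a first-exit modification of the Markov map near these parabolic endpoints then yield uniform expansion off a finite orbit $G(N)$, whence $\NE\subset G(N)$ and every point of $N$ is fixed. None of this --- in particular the boundary-point induction of Lemma~\ref{l:ends}, which is the heart of the matter --- is supplied by, or recoverable from, the ingredients you list (powers of cyclically reduced words plus Sacksteder), so the proposal as written does not close.
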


Notice that the conclusion of this proposition is not far away from that of Proposition~\ref{p:geodesic-free}. Indeed, from 
that proposition we know that for each point $x$, there are geodesics with arbitrarily big sums of derivatives along the 
compositions. Nevertheless, here we stress this property (in absence of property \smin) by asking for such geodesics 
in each of the cones $\mC_{\gamma}, \, \gamma \in \mathcal{G}$. 

Let us now deduce Theorem~\ref{t:exp} from Proposition \ref{p:geod}:

\vspace{0.3cm}

\begin{pfof}{Theorem~\ref{t:exp}}
Assume that $G$ satisfies the assumptions of Theorem~\ref{t:exp} but does not satisfy the property~\smin. 
Then, due to Proposition~\ref{p:geod}, for every point $x\in\Sc$ and for every cone $\mC_{\gamma}, \, 
\gamma \in \mathcal{G}$, we can find a geodesic $g=\gamma_k\cdots \gamma_1\in \mC_{\gamma}$ 
such that the sum of the intermediate derivatives along $\gamma$ exceeds~$2$:
\begin{equation}\label{eq:sg}
\sum_{j=1}^k (\gamma_j\cdots \gamma_1)'(x) > 2.
\end{equation}
Obviously, this inequality still holds in a small neighborhood of the initial point $x$. Hence, by the 
compactness of the circle, we can choose a finite set $\mathcal{F}$ of possible elements $g$. In 
particular, we can assume that the length of each of these $g$ does not exceed some constant~$L$. 

We claim that the following exponential lower bound holds for each $x \in \mathbb{S}^1$ and all~$n$:
\begin{equation}\label{eq:to-prove}
\sum_{g \in B(n)} g'(x) \ge 2^{[n/L]}.
\end{equation}
To prove this, we will actually prove a stronger statement. Namely, let $x \in \mathbb{S}^1$ be fixed. 
We will show that for every $m \geq 0$, there exist a subset $\Gamma_m \subset B(mL)$ and a map 
$\gamma \!: \Gamma_m \to \mathcal{G}$ 
(both depending on $x$~!), so that the 
associated cones $\mC_{\gamma(g)}g$, with $g \in \Gamma_m$, start at $g$ and satisfy:
\begin{enumerate}
\item For each $g \in \Gamma_m$, the cone $\mC_{\gamma(g)} g$ 
contains no point of $\Gamma_m$ (hence these cones are all mutually disjoint).
\item 
One has $\sum_{g \in \Gamma_m} g'(x) \ge 2^{m}$.
\end{enumerate}

The proof proceeds by induction on~$m$. For $m=0$, one can take $\Gamma_0 := \{\id\}$ and any function 
$\gamma : \Gamma_0 \to \mathcal{G}$. 
Assume that for some $m$, the set $\Gamma_m$ and the map $\gamma$ have been constructed, and let us construct 
them for $\widetilde{m}:=m+1$. To do this, for each $g \in \Gamma_m$,  
consider the point $y := g(x)$, and take 
$\overline{g}=\overline{\gamma}_k\cdots\overline{\gamma}_1$ in $\mC_{\gamma(g)} \cap \mathcal{F}$ 
(with~$k \leq L$) 
so that inequality 
(\ref{eq:sg}) holds for $\overline{g}$ at the point $y$. Next, take $\Gamma_{m+1}$ to be the set of elements 
of the form $\overline{\gamma}_j\cdots\overline{\gamma}_1 g$, where $g$ runs over all possible elements 
in $\Gamma_m$ and $1\le j \le k$, with $\overline{g}$ associated to $g$ 
as above. We then define the new map $\widetilde{\gamma}$ on $\Gamma_{m+1}$ by 
letting $\widetilde{\gamma}(\overline{\gamma}_j\cdots \overline{\gamma}_1 g)$ to be equal to any element 
of $\mathcal{G}$ different from $(\overline{\gamma}_j)^{-1}$ and (in case $j<k$) from $\overline{\gamma}_{j+1}$. 
(See Fig.~\ref{f:Gamma}.)

\begin{figure}
\includegraphics[scale=0.7]{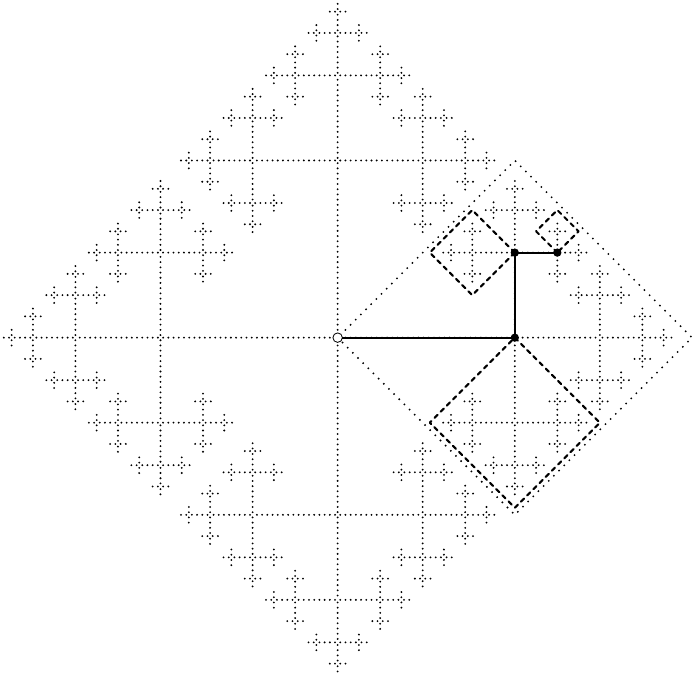}  \qquad
\includegraphics[scale=0.52]{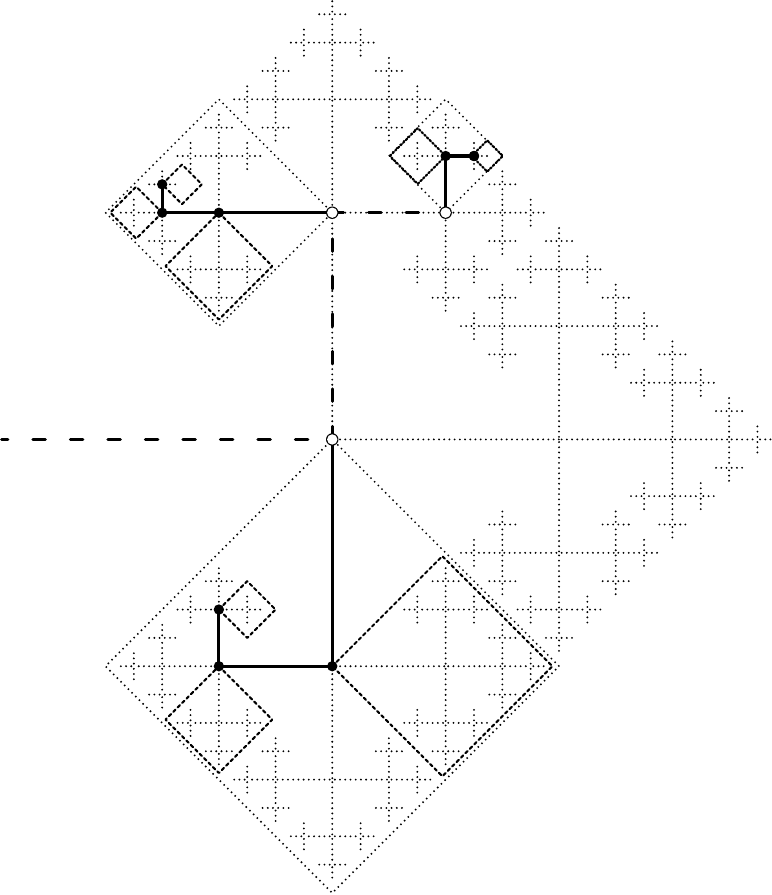}
\caption{On the left, the first step of the construction: 
$\Gamma_0=\{\id\}$, $\Gamma_1$ is shown by bold circles. 
On the right, the second step: $\Gamma_1$ is shown by empty circles, 
$\Gamma_2$ by bold ones.}\label{f:Gamma}
\end{figure}

By construction, $\Gamma_{m+1} \subset B_{L(m+1)}(e)$. Disjointness of cones also follows from the 
construction. Moreover, we have 
\begin{multline*}
\sum_{\overline{g} \in \Gamma_{m+1}} \overline{g} '(x) 
= 
\sum_{g \in \Gamma_m} \sum_{j=1}^k (\overline\gamma_j \cdots \overline\gamma_1 g)'(x) 
=  \sum_{g \in \Gamma_m} \sum_{j=1}^k (\overline\gamma_j \cdots \overline\gamma_1)' (g (x)) \cdot  g'(x) 
= \\
= \sum_{g \in \Gamma_m} g'(x) \sum_{j=1}^k (\overline\gamma_j \cdots \overline\gamma_1)' (g(x)) 
\ge 2\sum_{g \in \Gamma_m}  g'(x) \ge 2 \cdot 2^{m} = 2^{\widetilde{m}}.
\end{multline*}
This concludes the inductive proof. 

Finally,  
$$\sum_{g \in B(n)} g'(x) \ge \sum_{g \in \Gamma_{[n/L]}} g '(x) \ge 2^{[n/L]},$$
which shows (\ref{eq:to-prove}) and hence concludes the proof of the theorem. 
\end{pfof}

\vspace{0.25cm}

Roughly speaking, the sets $\Gamma$ constructed above are the sets of leaves of ``growing trees''.


\subsection{Proof of the Main Theorem for minimal actions}\label{s:proof}

In this section, we prove the statement of the Main Theorem for minimal actions. The proof relies on the dichotomy  
given by Theorem~\ref{t:exp}. More precisely, we will show that inequality (\ref{exp-sums}) forces property $(\star)$ to hold.  
Actually, we will show a much stronger fact, namely, property $(\star)$ holds provided 
the sum of derivatives along balls grows faster than linearly. We state this as a 
proposition for further reference.

\begin{proposition}\label{p:linear}
Let $G \subset \Diff^{\omega}_+(\Sc)$ be a finitely-generated free group acting minimally and 
having a non-expandable point $x_0$. Let $S_n (\cdot)$ be the function defined on the circle as 
$$S_n (x) := \sum_{g \in B(n)} g'(x),$$
and denote $S_n := S_n(x_0)$. If
\begin{equation}\label{e:condition}
\frac{S_n}{n} \longrightarrow \infty \quad \mbox{ as } \quad n \to \infty,
\end{equation}
then the stabilizer of $x_0$ is nontrivial.
\end{proposition}



We prove this by contradiction. Assume that the stabilizer of $x_0$ is trivial.
For each $n \geq 1$, let us consider the set $X_n:=\{g(x_0) \mid g\in B(n)\setminus \{\id\} \}$. 
Let $x_n$ be the point of $X_n$ that is closest to $x_0$ on the right. Then $x_n = f_{n}(x_0)$ 
for a unique $f_n \in B(n)$. Denote $I_n := [x_0,x_n)$.
We start with an elementary

\vspace{0.15cm}

\begin{lemma}\label{l:intersection}
The images $g(I_n)$, where $g\in B([n/2])$, are pairwise disjoint.
\end{lemma}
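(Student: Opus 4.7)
The plan is to argue by contradiction: assume two distinct elements $g_1, g_2 \in B([n/2])$ satisfy $g_1(I_n) \cap g_2(I_n) \neq \emptyset$, and then to derive a contradiction with the defining property of $x_n$. Applying $g_2^{-1}$ to the intersection, the hypothesis reduces to $I_n \cap h(I_n) \neq \emptyset$ for $h := g_2^{-1} g_1$. The word-length estimate $|h| \le |g_2| + |g_1| \le 2[n/2] \le n$ gives $h \in B(n)$, while freeness of $G$ and $g_1 \neq g_2$ force $h \neq \id$; the same bounds apply to $h^{-1}$, so both $h$ and $h^{-1}$ lie in $B(n) \setminus \{\id\}$.

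The key step is an elementary observation about half-open arcs on the circle: if $J_i = [a_i, b_i) \subset \mathbb{S}^1$, $i=1,2$, are two arcs at least one of which is a proper arc, and $J_1 \cap J_2 \neq \emptyset$, then $a_1 \in J_2$ or $a_2 \in J_1$. I would prove this by picking a common point $p \in J_1 \cap J_2$ and walking clockwise from $p$, using that each $a_i$ is contained in its own arc (by the left-closed convention), so whichever of $a_1, a_2$ is encountered first during the walk must still lie in the other arc. Since $x_n \notin I_n$, the arc $I_n = [x_0, x_n)$ is proper, and because all elements of $G$ preserve orientation, $h(I_n) = [h(x_0), h(x_n))$ is again a half-open arc of the same type. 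The lemma then yields $h(x_0) \in I_n$ or $x_0 \in h(I_n)$, the latter being equivalent to $h^{-1}(x_0) \in I_n$.

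To conclude, in either case I obtain some $\tilde h \in \{h, h^{-1}\} \subset B(n) \setminus \{\id\}$ with $\tilde h(x_0) \in [x_0, x_n)$. The Case~1 hypothesis that the stabilizer of $x_0$ in $G$ is trivial rules out $\tilde h(x_0) = x_0$, so $\tilde h(x_0)$ is a point of $X_n$ that lies in the open arc $(x_0, x_n)$, contradicting the defining property of $x_n$ as the closest point of $X_n$ to $x_0$ on the right. I do not foresee any serious obstacle in this argument; the only point requiring a touch of care is the half-open convention for $I_n$, since it is precisely what places the left endpoint $x_0$ itself inside each arc and drives the endpoint-comparison step that closes the proof.
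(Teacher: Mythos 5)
Your proof is correct and follows essentially the same route as the paper's: reduce the overlap of $g_1(I_n)$ and $g_2(I_n)$ to the observation that the left endpoint of one half-open arc lies in the other, pass to $h = g_1^{-1}g_2$ (or its inverse) in $B(n)\setminus\{\id\}$ via the word-length bound $2[n/2]\le n$, and contradict the minimality of $x_n$ using triviality of the stabilizer. You merely spell out the arc-intersection step and the length estimate a bit more explicitly than the paper does.
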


\begin{proof}
If two such intervals $g_1([x_0,x_n))$ and $g_2([x_0,x_n))$ intersect, then the left endpoint of one of them must 
belong to the other one, say $g_2(x_0) \in g_1([x_0,x_n))$. Therefore, $g_1^{-1}\circ g_2 (x_0) \!\in\! [x_0,x_n)$. 
If $g_1 \neq g_2$, then $g_1^{-1}\circ g_2 (x_0) $ cannot be equal to $x_0$, because of the hypothesis on the 
stabilizer of $x_0$. Nevertheless, this is impossible, because  $g_1^{-1}\circ g_2$ belongs to 
$B(n)\setminus \{\id\}$, and we defined $x_n$ to be the image of~$x_0$ under a map 
$g\in  B(n)\setminus \{\id\}$ that is the closest on the right among all such images.
\end{proof}

\begin{corollary}\label{c:control}
The distortion coefficients of all maps $g\in B([n/2])$ are uniformly bounded on $I_n$.
\end{corollary}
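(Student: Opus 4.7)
The plan is to combine Lemma \ref{l:intersection} with the subadditive distortion bound from Proposition \ref{p:sum}. The key observation is that when we expand $\varkappa(g; I_n)$ for an element $g = g_k \circ \cdots \circ g_1 \in B([n/2])$, the intermediate lengths appearing in the bound are themselves lengths of images of $I_n$ under elements of $B([n/2])$, so the disjointness of Lemma \ref{l:intersection} caps their total.

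Concretely, first I would fix an arbitrary $g = g_k \circ \cdots \circ g_1 \in B([n/2])$ (with $k \leq [n/2]$ being its word length). By Proposition \ref{p:sum},
\begin{equation*}
\varkappa(g; I_n) \;\leq\; C_{\mathcal{G}} \sum_{i=0}^{k-1} \bigl| g_i \circ \cdots \circ g_1 (I_n) \bigr|,
\end{equation*}
where we convene that $g_0 \circ \cdots \circ g_1 = \id$. Since every prefix $g_i \circ \cdots \circ g_1$ of $g$ has word length at most $k \leq [n/2]$, all the prefixes sit inside $B([n/2])$, and distinct prefixes are distinct group elements (the expression $g_k \cdots g_1$ being reduced). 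Therefore
\begin{equation*}
\sum_{i=0}^{k-1} \bigl| g_i \circ \cdots \circ g_1 (I_n) \bigr| \;\leq\; \sum_{h \in B([n/2])} |h(I_n)|.
\end{equation*}

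Second, by Lemma \ref{l:intersection} the family $\{ h(I_n) \}_{h \in B([n/2])}$ consists of pairwise disjoint subintervals of $\Sc$, so the right-hand side above is bounded by the total length of the circle. Consequently
\begin{equation*}
\varkappa(g; I_n) \;\leq\; C_{\mathcal{G}} \cdot |\Sc|,
\end{equation*}
which is a bound independent of both $g \in B([n/2])$ and $n$, proving the corollary.

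There is essentially no obstacle here: the only thing one needs to notice (and which drives the whole ``sum-of-derivatives'' philosophy recalled in \S\ref{s:prelim-1}) is that Proposition \ref{p:sum} packages the distortion estimate precisely in terms of the quantity that Lemma \ref{l:intersection} controls, namely the total length of the images of $I_n$ under the prefixes. The only minor subtlety worth stating explicitly is that prefixes of a reduced word in the free group are reduced, hence they are genuinely distinct elements of $B([n/2])$, so dominating the prefix sum by the full sum over $B([n/2])$ is legitimate.
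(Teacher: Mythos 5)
Your proof is correct and follows essentially the same route as the paper's: write $g$ as a reduced word, observe that the prefixes are distinct elements of $B([n/2])$ so their images of $I_n$ are pairwise disjoint by Lemma~\ref{l:intersection}, hence the sum in Proposition~\ref{p:sum} is bounded by the length of the circle. Your intermediate step of dominating the prefix-sum by the full sum over $B([n/2])$ is a harmless rephrasing of the same disjointness argument.
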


\begin{proof} 
Write $g = \gamma_k\cdots \gamma_1$, so that the intermediate images 
$\gamma_j\cdots \gamma_1 (I_n)$, $j \leq k$, are pairwise disjoint. Then 
the sum of their lengths does not exceed 1. A direct application of 
Proposition~\ref{p:sum} thus concludes the proof.
\end{proof}

\begin{lemma}\label{l:conv-int}
The length $|I_n|$ decreases to zero as $n$ tends to infinity. More precisely, 
there exists a positive constant $C_1$ such that for all $n \geq 1$, 
$$\quad |I_n|\le \frac{C_1}{S_{[n/2]}}.$$
\end{lemma}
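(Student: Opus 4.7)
The plan is to combine Lemma~\ref{l:intersection} (disjointness of the translates $g(I_n)$ for $g \in B([n/2])$) with Corollary~\ref{c:control} (uniform distortion bound on $I_n$) via a simple length-summing argument.

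First, let $\kappa > 0$ be the uniform distortion bound given by Corollary~\ref{c:control}, so that for every $g \in B([n/2])$ and every two points $y, z \in I_n$ one has $g'(y) \le e^{\kappa} g'(z)$. In particular, taking $z = x_0$ and integrating, one obtains the comparison
$$
|g(I_n)| \ge e^{-\kappa} \, g'(x_0) \, |I_n|.
$$
Next, by Lemma~\ref{l:intersection}, the intervals $\{g(I_n) : g \in B([n/2])\}$ are pairwise disjoint inside $\mathbb{S}^1$, so the sum of their lengths is bounded above by $1$ (the length of the circle, up to normalization). Summing the above inequality over all $g \in B([n/2])$ therefore gives
$$
e^{-\kappa} \, |I_n| \sum_{g \in B([n/2])} g'(x_0) \;\le\; 1,
$$
which is precisely the quantitative bound $|I_n| \le C_1 / S_{[n/2]}$ with $C_1 := e^{\kappa}$.

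The convergence $|I_n| \to 0$ then follows immediately: under the standing assumption (\ref{e:condition}) of this section, $S_n / n^2 \to \infty$, and in particular $S_{[n/2]} \to \infty$, so the right-hand side tends to zero. The only mildly delicate point is making sure $\kappa$ is genuinely independent of $n$, but this is exactly the content of Corollary~\ref{c:control}: the total length $\sum_j |\gamma_j \cdots \gamma_1(I_n)|$ of the intermediate images along any reduced word of length $\le [n/2]$ is bounded by $1$ by disjointness, and Proposition~\ref{p:sum} then provides a distortion bound depending only on the generating set $\mathcal{G}$, not on $n$ or on the particular $g$. No combinatorial subtlety specific to the free group is needed here; the argument is purely metric and would work identically in Case~2 once one has analogues of Lemma~\ref{l:intersection} and Corollary~\ref{c:control}.
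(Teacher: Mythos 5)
Your proof is correct and follows precisely the same strategy as the paper's: apply Corollary~\ref{c:control} to obtain $|g(I_n)| \ge c_1 g'(x_0)|I_n|$ with $c_1 = e^{-\kappa}$, sum over $g \in B([n/2])$, and bound the total by $1$ via the disjointness from Lemma~\ref{l:intersection}. The observations about the independence of the distortion constant from $n$ and the deduction of $|I_n|\to 0$ from condition~(\ref{e:condition}) are both accurate and exactly what the paper relies on.
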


\begin{proof}
By Corollary~\ref{c:control}, there exists a constant $c_1>0$ such that for all $g\in B([n/2])$, 
we have $|g(I_n)|\ge c_1 g'(x_0) |I_n|$. Hence,
$$
\sum_{g\in B([n/2])} |g(I_n)| \ge c_1 |I_n| \sum_{g\in B([n/2])} g'(x_0) = c_1 |I_n| S_{[n/2]}.
$$ 
On the other hand, since the intervals in the left-side expression are disjoint, 
the sum of their lengths does not exceed~$1$. Thus,
$$
|I_n| \le \frac{1}{c_1 S_{[n/2]}} ,
$$
which concludes the proof.
\end{proof}

Now, let us fix a sequence of positive numbers $r_n$ such that $r_n = o(1/n)$ and $1/S_{[n/2]}~=~o(r_n)$. For 
instance, $r_n := 1 / \sqrt{n S_{[n/2]}}$ will do, though we do not need to deal with any explicit formula. 
The following lemma says that for all sufficiently large~$n$, every map $g\in B(n)$ is ``almost 
affine'' in a complex neighborhood of $x_0$ of radius~$r_n$.

\begin{lemma}\label{l:dc2} There exist $C_2 >0$ and an integer $n_1$ such that for all $n\ge n_1$ 
and all $g\in B(n)$, one has 
$$\varkappa \Big( g, U_{r_n}^{\bbC}(x_0) \Big) \le C_2 n r_n \xrightarrow[n\to\infty]{} 0.$$
\end{lemma}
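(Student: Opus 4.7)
The plan is to apply the complex distortion estimate (Proposition~\ref{bound-complex}) to each reduced word $g=\gamma_k\cdots \gamma_1 \in B(n)$, with $k \le n$, and to exploit the non-expandability of $x_0$ to replace what is a priori an exponentially growing sum of intermediate derivatives by a linear bound in $n$. Concretely, for such $g$, I would denote
$$
S(g):=\sum_{i=0}^{k-1}(\gamma_i\cdots\gamma_1)'(x_0),
$$
with the convention that the empty product is the identity.

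The key observation is that, since $x_0\in\NE$, every element $h \in G$ satisfies $h'(x_0)\le 1$. Each intermediate composition $\gamma_i\cdots\gamma_1$ lies in $G$, so $(\gamma_i\cdots\gamma_1)'(x_0) \le 1$, and hence
$$
S(g)\le k \le n.
$$
This uniform linear bound (valid simultaneously for all $g \in B(n)$) is what makes the argument work and is the one place where the hypothesis $x_0 \in \NE$ is used.

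With this in hand I would check the hypotheses of Proposition~\ref{bound-complex} at scale $\delta := 2r_n$. The required condition $\delta \le \min\{\log 2/(2C_{\mathcal{G}}S(g)),\rho\}$ is implied by the $g$-independent inequality $2r_n \le \min\{\log 2/(2C_{\mathcal{G}} n),\rho\}$, which holds for all sufficiently large $n$, say $n\ge n_1$, because $r_n=o(1/n)$ and $r_n\to 0$. The proposition then gives
$$
\varkappa\bigl(g,U_{r_n}^{\bbC}(x_0)\bigr)\le 2C_{\mathcal{G}}\,S(g)\cdot 2r_n \le 4C_{\mathcal{G}}\, n r_n,
$$
so the constant $C_2:=4C_{\mathcal{G}}$ works. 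Convergence $C_2 n r_n \to 0$ is immediate from $r_n=o(1/n)$.

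There is no real obstacle internal to this lemma: the argument is a one-shot application of the complex distortion bound. The real content sits in the \emph{setup} around it, which I would regard as the delicate point: the choice of $r_n$ must simultaneously satisfy $r_n=o(1/n)$ (used here, to kill distortion on a complex neighborhood of $x_0$) and $1/S_{[n/2]}=o(r_n)$ (used elsewhere, so that the orbit-closest point $x_n$ sits well inside $U_{r_n}^{\bbC}(x_0)$). The fact that the distortion sum for any single word is bounded by $n$ rather than by something that grows with $S_n$ — a feature entirely due to non-expandability at $x_0$ — is what allows such an intermediate scale $r_n$ to exist.
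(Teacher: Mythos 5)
Your proof is correct and takes essentially the same approach as the paper: use non-expandability of $x_0$ to bound the sum of intermediate derivatives along any reduced word in $B(n)$ by $n$, then invoke Proposition~\ref{bound-complex} at scale $\delta=2r_n$, which is admissible for large $n$ because $r_n=o(1/n)$. The paper states this more tersely, leaving the choice of $\delta$ and the verification of the admissibility condition implicit, but the content is identical.
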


\begin{proof}
As the point $x_0$ is non-expandable,  
writing each $g \in B(n)$ in its reduced form $g~\!=\!~\gamma_k\cdots \gamma_1$, 
$k\le n$, we obtain the following upper bound for the sum of the intermediate derivatives:
$$
\sum_{j=0}^{k-1} (\gamma_j\cdots \gamma_1)'(x_0) \le \sum_{j=0}^{k-1} 1 = k \le n.
$$
A direct application of Proposition~\ref{bound-complex} then shows the lemma.
\end{proof}

The next two lemmas are devoted to show that the maps $f_n$ are in 
fact close to the identity on the $r_n$-neighborhood of~$x_0$.

\begin{lemma}\label{l:id}
The maps $\widetilde{f}_n(y) := \frac{1}{r_n} \Big( f_n \big( x_0+r_ny \big) - x_0 \Big)$ 
converge to the identity in the~$C^1$ topology on~$U_1^{\bbC}(0)$.
\end{lemma}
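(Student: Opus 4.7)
The claim is that $\widetilde{f}_n \to \mathrm{id}$ in $C^1$ on $U_1^{\bbC}(0)$, i.e.\ both $\widetilde{f}_n(y)\to y$ and $\widetilde{f}_n'(y)\to 1$ uniformly. Writing $a_n := f_n'(x_0) \le 1$ (using non-expandability of $x_0$), Lemma~\ref{l:dc2} immediately gives
\[
\widetilde{f}_n'(y) \;=\; f_n'(x_0 + r_n y) \;=\; a_n\bigl(1+o(1)\bigr)
\]
uniformly on $U_1^{\bbC}(0)$. Moreover $\widetilde{f}_n(0) = |I_n|/r_n$, and since $|I_n|\le C_1/S_{[n/2]}$ by Lemma~\ref{l:conv-int} while $r_n = 1/\sqrt{n S_{[n/2]}}$, this ratio is at most $C_1\sqrt{n/S_{[n/2]}}$, which tends to~$0$ by~\eqref{e:condition}. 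Hence once I establish $a_n \to 1$, uniform convergence $\widetilde{f}_n'(y) \to 1$ is automatic, and integration of $\widetilde{f}_n'$ along the segment from $0$ to $y$ gives $\widetilde{f}_n(y) \to y$. The whole content of the lemma is therefore the statement $a_n \to 1$.

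I would prove this by contradiction, extracting a subsequence with $a_n \le 1-\varepsilon$ and exploiting non-expandability through the inverse element $f_n^{-1} \in G$, which satisfies $(f_n^{-1})'(x_0) \le 1$. Along such a subsequence, Lemma~\ref{l:dc2} together with $|I_n| = o(r_n)$ shows that $f_n$ maps the real interval $(x_0 - r_n,\,x_0 + r_n)$ strictly into itself: its slope is at most $a_n\,e^{o(1)} < 1 - \varepsilon/2$, while $x_0$ is displaced by only $|I_n| \ll r_n$. The intermediate value theorem then yields a fixed point $p_n \in (x_0, x_0 + r_n)$, and a direct estimate in the almost-affine regime places it at distance $\approx |I_n|/(1 - a_n) = o(r_n)$ from $x_0$. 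Applying Lemma~\ref{l:dc2} at $p_n$ gives $f_n'(p_n) = a_n(1+o(1)) \le 1 - \varepsilon/3$ for large $n$, and hence $(f_n^{-1})'(p_n) = 1/f_n'(p_n) \ge (1-\varepsilon/3)^{-1} > 1$.

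The final step is to transfer this expansion of $f_n^{-1}$ from $p_n$ to $x_0$. Using identity~\eqref{eq:inverses}, the distortion of $f_n^{-1}$ on $W_n := f_n(U_{r_n}^{\bbC}(x_0))$ equals that of $f_n$ on $U_{r_n}^{\bbC}(x_0)$, so it is $o(1)$; and both $p_n = f_n(p_n)$ and $x_n = f_n(x_0)$ lie in $W_n$. Provided the preimage $y_n := f_n^{-1}(x_0)$ also sits in $U_{r_n}^{\bbC}(x_0)$ (so that $x_0 \in W_n$), a short distortion chain in $W_n$ yields
\[
(f_n^{-1})'(x_0) \;=\; (f_n^{-1})'(p_n)\bigl(1 + o(1)\bigr) \;>\; 1
\]
for large $n$, contradicting $(f_n^{-1})'(x_0) \le 1$. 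The main obstacle is precisely this last verification: one must show that the degenerate possibility $a_n \to 0$ (in which $y_n \approx x_0 - |I_n|/a_n$ could escape $U_{r_n}^{\bbC}(x_0)$) does not disrupt the distortion comparison. In that regime, however, the repelling multiplier $(f_n^{-1})'(p_n) = 1/a_n$ blows up while $p_n \to x_0 \in \mathrm{NE}$, so working on a slightly enlarged complex neighborhood on which Proposition~\ref{bound-complex} still provides bounded (not necessarily $o(1)$) distortion should yield an even sharper contradiction with non-expandability. Apart from this bookkeeping, the rest of the argument is a mechanical application of the distortion estimates of~\S\ref{s:prelim-1}.
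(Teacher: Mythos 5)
Your reduction to showing $f_n'(x_0)\to 1$, and your instinct to look at $f_n^{-1}$ and exploit non-expandability at $x_0$, are exactly the right ingredients. But the ``main obstacle'' you flag dissolves immediately, and once it does, the fixed-point construction and the whole proof-by-contradiction become unnecessary.

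The key point you are missing is that $f_n^{-1}$ is itself an element of $B(n)$ (the set $\mathcal{G}$ is symmetric), so Lemma~\ref{l:dc2} applies to $f_n^{-1}$ directly and gives $\varkappa\big(f_n^{-1}, U_{r_n}^{\bbC}(x_0)\big)\le C_2\,n\,r_n = o(1)$. To compare $(f_n^{-1})'$ at two points, you only need both points to lie in $U_{r_n}^{\bbC}(x_0)$; nothing about the preimage $y_n = f_n^{-1}(x_0)$ enters. Working instead on $W_n = f_n(U_{r_n}^{\bbC}(x_0))$ via \eqref{eq:inverses} is what creates the spurious constraint $y_n\in U_{r_n}^{\bbC}(x_0)$, and hence the worry about $a_n\to 0$.

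Once this is clear, the fixed point $p_n$ is superfluous and the paper's argument emerges. One simply takes the already-available point $x_n = f_n(x_0)$, which satisfies $|x_n - x_0| = O(1/S_{[n/2]}) = o(r_n)$, so $x_n\in U_{r_n}(x_0)$ for large $n$. Non-expandability of $x_0$ then yields two complementary bounds simultaneously: $(f_n^{-1})'(x_0)\le 1$ on the one hand, and $(f_n^{-1})'(x_n) = 1/f_n'(x_0)\ge 1$ on the other. Since $\varkappa\big(f_n^{-1},U_{r_n}^{\bbC}(x_0)\big) = o(1)$ and both $x_0,x_n$ lie in that disk, these two derivatives agree up to a factor $1+o(1)$, hence both tend to $1$, and so does $f_n'(x_0)$. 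This is a direct squeeze: no subsequence, no intermediate value theorem, no contradiction. Your route is salvageable with the bookkeeping above, but it essentially reconstructs $x_n$ (as a nearby fixed point) instead of simply using $x_n$ itself, and it reaches the same comparison by a longer path.
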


\begin{proof}
By Lemma \ref{l:conv-int}, we have that 
$$\widetilde{f}_n(0) = \frac{1}{r_n} (x_n-x_0) = O \Big( \frac{1}{r_n S_{[n/2]}} \Big) 
\xrightarrow[n \to \infty]{} 0.$$ 
Hence, it suffices to check that the derivatives $\widetilde{f}_n'$ tend to~$1$. Moreover, due to the control 
of distortion guaranteed by Lemma~\ref{l:dc2}, it suffices to check such a convergence at a single point.

To do this, let us consider the map $f_n^{-1}\in B(n)$. On the one hand, 
$(f_n^{-1})'(x_0)\le 1$, as $x_0$ is a non-expandable point. On the other hand, 
$$
(f_n^{-1})'(x_n)= (f_n^{-1})'(f_n(x_0)) = \frac{1}{f_n'(x_0)} \ge 1.
$$
Since $|[x_0,x_n]| = O(1 / S_{[n/2]}) = o(r_n)$, for all $n$ sufficiently large, we have 
$x_n\in U_{r_n}(x_0)$. Control of distortion then implies that these 
two derivatives are close to each other:
$$
\left| \log \frac{(f_n^{-1})'(x_n)}{(f_n^{-1})'(x_0)} \right| \le C_2 n r_n.
$$
Thus, both derivatives are $o(1)$-close to~$1$, and hence the same holds for $f_n'(x_0)$. 
\end{proof}

Next, in order to apply Ghys' commutators technique, we need to find \emph{two} elements in $G$ 
that generate a non-solvable group and are close enough to the identity on some interval. 
This is done by the following

\begin{lemma}\label{l:free-gen}
The sequence $f_n$ contains an infinite subsequence $f_{n_i}$ such that for 
each~$i$, the elements $f_{n_i}$ and $f_{n_{i} +1}$ generate a free group.
\end{lemma}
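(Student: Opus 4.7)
The plan is to proceed by contradiction: assume that for every sufficiently large $n$, the elements $f_n$ and $f_{n+1}$ commute in $G$. The key structural input is that in a free group the centralizer of any nontrivial element is a unique maximal cyclic subgroup; in particular, any two commuting nontrivial elements lie in a common maximal cyclic subgroup. Each $f_n$ is nontrivial because $f_n(x_0)=x_n\neq x_0$ and the stabiliser of $x_0$ is trivial in Case~1. Applying this observation iteratively to the pairs $(f_n,f_{n+1})$ with $n\ge N$ (and noting that consecutive pairs share the nontrivial element $f_{n+1}$, so the cyclic subgroups they span coincide), I obtain a single element $h_0\in G$ with $f_n=h_0^{k_n}$ for every $n\ge N$ and with nonzero integers $k_n$. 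Triviality of the stabiliser makes $k\mapsto h_0^k(x_0)$ injective on $\bbZ$, so the fact that $\{x_n\}$ strictly decreases to $x_0$ (Lemma~\ref{l:conv-int}) forces $|k_n|\to\infty$.

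Next, I would analyse the dynamics of $h_0\in\Diff^\omega_+(\Sc)$. First, $h_0(x_0)\neq x_0$, again since the stabiliser of $x_0$ is trivial. Second, if $h_0$ admitted any other fixed point, then its rotation number would be zero, and standard Denjoy theory for $C^2$ circle diffeomorphisms would force the orbit of the non-fixed point $x_0$ to accumulate only on the fixed-point set of $h_0$; but $x_0$ is not itself fixed, which contradicts $h_0^{k_n}(x_0)=x_n\to x_0$. Hence $h_0$ must act freely on $\Sc$, and by Denjoy's theorem it is topologically conjugate to a rotation with some irrational rotation number $\rho$.

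The remaining case is where I expect the main obstacle. Combining the exponential decay $|I_n|\le C_1/S_{[n/2]}=O(e^{-\lambda n})$ from Lemma~\ref{l:conv-int} together with Theorem~\ref{t:exp}, and the linear bound $|k_n|\le n/|h_0|$, I obtain a super-polynomially fast recurrence $|h_0^{k_n}(x_0)-x_0|=O(e^{-\lambda'|k_n|})$ of an orbit of the real-analytic diffeomorphism~$h_0$. Moreover, Lemma~\ref{l:dc2} provides the companion estimate $(h_0^{k_n})'(x_0)\to 1$. To derive the contradiction I would exploit the rigidity of real-analytic circle dynamics with irrational rotation number, combined with the observation that $G$ preserves no probability measure on $\Sc$ (by Lemma~4.1.8 of~\cite{Navas-Es}, since $G$ is free of rank $\ge 2$): such exponentially fast returns cannot coexist with the bounded-distortion control on iterates of $h_0$ inherited from Denjoy's theory and the analyticity of $h_0$. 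Once this contradiction is reached, there must be infinitely many $n$ for which $f_n$ and $f_{n+1}$ fail to commute; each such pair then generates a non-cyclic subgroup of the free group $G$, which is therefore free of rank~$2$, yielding the desired subsequence.
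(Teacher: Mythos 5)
Your reduction to the dichotomy for free groups, your observation that a failure of the conclusion forces $f_n = h^{k_n}$ for a single $h$ with $|k_n|\le n$, and your observation that $h$ then has irrational rotation number, all match the paper's argument closely. The genuine gap is in the final step, which you yourself flag as the obstacle. Saying that ``exponentially fast returns cannot coexist with bounded-distortion control and analyticity'' is not a theorem: a real-analytic circle diffeomorphism with a Liouville rotation number can exhibit arbitrarily fast recurrence at every point, and unique ergodicity of $h$ (which makes your appeal to the non-existence of an invariant measure for $G$ moot here, since the argument is now entirely about the single map $h$) does not rule this out. What is needed, and what the paper actually does, is a quantitative covering argument exploiting non-expandability of $x_0$: since $x_0\in\NE$, one has $(h^j)'(x_0)\le 1$ for all $j$, and by control of distortion the images $h^j(\widetilde I_n)$ have length $\le C|\widetilde I_n|\le CC_1/S_{[n/2]}$. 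Choosing $n=q_i-1$ (with $p_i/q_i$ the good rational approximations of $\tau(h)$), the $2n+1$ intervals $h^j(\widetilde I_n)$, $0\le j\le 2n$, cover the circle with multiplicity at most two, so their total length is $\ge 1$; but the above estimate bounds this total length by $O(n/S_{[n/2]})\to 0$ under assumption (\ref{e:condition}). That is the contradiction. Your proposal misses both ingredients: the derivative bound coming from non-expandability of $x_0$ (you only extract $(f_n)'(x_0)\to 1$, which concerns the specific powers $k_n$, not all $j$), and the choice of the special times $n=q_i-1$ where the orbit segment covers the circle with controlled multiplicity. Without these, the claimed contradiction does not follow.

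Two smaller remarks. First, your argument that $h$ is fixed-point-free is fine but slightly more elaborate than necessary: the paper simply notes that $x_0$ is recurrent and non-fixed under $h$, which already forces $\tau(h)$ irrational. Second, you invoke the exponential bound from Theorem~\ref{t:exp}, but the lemma is stated and proved in the paper under the weaker standing assumption $S_n/n^2\to\infty$ of (\ref{e:condition}); the covering argument only needs $n/S_{[n/2]}\to 0$, so the exponential growth is not required here.
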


\begin{proof}
It is well known that any two elements of a free group either generate a free subgroup, or belong to the same 
cyclic subgroup. Thus, if the conclusion of this lemma didn't hold, all the maps $f_n$ with sufficiently 
large $n$ would be powers of some fixed $h \in G$, say $f_n = h^{k(n)}$ for all $n \geq m$. 
Since $f_n$ belongs to $B(n)$, we must necessarily have $|k(n)|\le n$.

Now for each $n\ge m$, let us consider the segment of $h$-orbit 
$Y_n:=\{h^j(x_0) \mid j=-n,\dots,n\}$. Let $\widetilde{x}_n$ be the point of $Y_n\setminus \{x_0\}$ 
that is closest to $x_0$ on the right. The interval $\widetilde{I}_n := [x_0,\widetilde{x}_n)$ is contained 
in $I_n$. As a consequence, $x_0$ is (nonperiodic and) recurrent under the action of $h$, hence the 
rotation number $\tau(h)$ must be irrational.

Next, notice that there are arbitrarily large values of $n$ with the following property: the 
intervals $h^j (\widetilde{I}_n)$, $j = 0,1,\ldots,2n$, cover the whole circle with multiplicity 
at most two. Indeed, such a property is invariant under topological 
conjugacy, thus it suffices to check it for the Euclidean rotation of angle 
$\tau(h)$. Now, for this particular case, it can be easily verifed for each integer of the form 
$n=q_{i}-1$, where $\frac{p_i}{q_i}$ is the sequence of good rational approximations of $\tau(h)$.

Finally, control of distorsion shows that for each of the integers $n$ above and $0 \leq j \leq 2n$,
$$|h^j(\widetilde I_n)| \le C (h^j)'(x_0) |\widetilde{I}_n| \le C |\widetilde{I}_n| \le \frac{CC_1}{S_{[n/2]}},$$
where the second inequality comes from that $x_0$ is non-expandable and the last one from Lemma 
\ref{l:conv-int}. As a consequence, the sum of lengths of the intervals $h^j (\widetilde{I}_n)$, 
$j = 0,1,\ldots,2n$, is $O(n/S_{[n/2]})$. It is hence smaller than~$1$ for $n$ 
large enough, which contradicts the fact that these intervals cover the circle. 
\end{proof}

Together with Lemma~\ref{l:id}, the preceding lemma implies that there exists $n$ such that the maps 
$f_n$, $f_{n+1}$ generate a free group and are simultaneously close to the identity 
on $U_{r_n}^{\bbC}(x_0)$. By Proposition~\ref{p:to-id}, the sequence 
of their commutators tends to the identity on~$U_{r_n/2}^{\bbC}(x_0)$. 
Using Proposition~\ref{p:affine}, we conclude that there exist intervals $J\subset I\subset \Sc$ 
and a change of variables  $\varphi:I\to [-1,2]$ after which the $C^1([0,1], [-1,2])$-closure 
of the set of restrictions $\{\varphi \circ g|_I \circ \varphi^{-1} \mid g\in G\}$ contains all 
the translations $T_s:y\mapsto y+s$ with $s\in [-1,1]$. 

We claim that the last property above yields a contradiction. To see 
this, first recall that Sacksteder's theorem is valable in our context: there exists $f \in G$ with a 
hyperbolic repelling fixed point $y_0$ (see \cite{DKN} for a discussion on this). By minimality, 
$y_0$ can be chosen to belong to any small open subinterval (up to changing $f$ by 
an appropriate conjugate). Now, 
the existence of translations in the local $C^1$ closure of the group implies that $x_0$ can 
be mapped arbitrarily close to $y_0$ keeping the derivative bounded away from zero: 
$$
\exists C_3>0, \, h_n \in G : \quad h_n (x_0) \to y_0, \quad  \quad h_n'(x_0) \ge C_3 \quad \forall n. 
$$
Take $k$ such that $f'(y_0)^k > \frac{1}{C_3}$, and consider the element 
$g_n := f^k \circ h_n$. We have 
$$\limsup_{n \to \infty} g_n' (x_0) = 
\limsup_{n\to \infty} (f^k \circ h_n)'(x_0) \ge C_3 \cdot f'(y_0)^k >1,
$$
which contradicts the fact that $x_0$ is non-expandable. 
This closes the proof of Proposition~\ref{p:linear}, hence that of the Main Theorem assuming Theorem~\ref{t:exp}.

\vspace{0.1cm}

\begin{remark}\label{r:NE-C1}
The final arguments of this proof, together with Remark~\ref{r:C1}, imply that 
whenever $\NE \neq \emptyset$, the group $G$ must be $C^1$-locally discrete.  
\end{remark}

\begin{remark} In the same spirit of the preceding remark, there is an alternative end of proof for Lemma 
\ref{l:free-gen}. Namely, after having detected the element $h \in G$ of irrational rotation number, 
we may invoke a theorem of Herman from \cite{Herman}, according to which the sequence of diffeomorphisms 
$h^{q_i}$ converges to the identity in the $C^1$ topology (see \cite{navas-triestino} for a much shorter proof of 
this fact). Now, using this sequence and Remark~\ref{r:C1}, we may conclude as above that there is a sequence 
of elements $g_n \in G$ for which $\lim_{n \to \infty} g_n' (x_0) > 1$, which is a contradiction.
\end{remark}


\subsection{A complementary remark}

\emr{For further use, we next deal with the case of a point $x_0 \! \in \! \NE$ having nontrivial stabilizer. We will show that, in 
this case, the sum of the derivatives at $x_0$ of elements in a ball of radius $n$ cannot grow faster than quadratically on $n$. }

\emr{Formally speaking, we do not need such a consideration for the proof of our Main Theorem: if all the 
points in NE have nontrivial stabilizers, we already have the property ($\star$). However, this consideration 
is interesting by itself. For instance, together with Example~\ref{ex:psl} (this shows that quadratic growth is critical 
for this phenomenon), it answers the question ``how fast such sums may grow?''. Besides, it turns out to be quite 
useful. Namely, as the reader will see in Remark~\ref{r:rare} below, it allows to construct a Markov partition for an 
action that uses only the generators of the group (this argument is one of the key points of~\cite{A-etal}).
}

\begin{proposition}\label{p:quadratic}
Let $G \subset \Diff^{\omega}_+(\Sc)$ be a finitely-generated free group acting minimally and 
having a non-expandable point $x_0$. As before, denote 
$$S_n := S_n (x_0) = \sum_{g \in B(n)} g'(x_0).$$
If $x_0$ has nontrivial stabilizer, then the following growth estimate cannot hold:
\begin{equation}\label{e:condition-q}
\frac{S_n}{n^2} \longrightarrow \infty \quad \mbox{ as } \quad n \to \infty.
\end{equation}
\end{proposition}

For the proof, we first need some information on the stabilizer of $x_0$. 

\vspace{0.1cm}

\begin{lemma} The stabilizer of $x_0$ in $G$ is infinite cyclic.
\end{lemma}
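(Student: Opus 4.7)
My plan is to observe that $H := \mathrm{Stab}_G(x_0)$ is free by Nielsen--Schreier (as a subgroup of the free group $G$), and then to prove that $H$ is abelian; since a free group that happens to be abelian is either trivial or isomorphic to $\mathbb{Z}$, the lemma then follows from the Case 2 standing assumption that $H$ is non-trivial.

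A key preliminary is that every $g \in H$ satisfies $g'(x_0) = 1$: non-expandability of $x_0$ gives $g'(x_0) \le 1$, and applying this to $g^{-1} \in H$ yields $(g^{-1})'(x_0) = 1/g'(x_0) \le 1$. Hence every non-trivial $g \in H$ is an analytic diffeomorphism fixing $x_0$ and tangent to the identity there; denote by $\nu(g) \ge 2$ the order of tangency to the identity at $x_0$ (the smallest $k$ with $g^{(k)}(x_0) \ne 0$), which is finite by analyticity. A direct computation with formal vector fields (writing $f = \exp(X_f)$, $g = \exp(X_g)$ and computing the bracket to leading order) gives the inequality
$$
\nu([f,g]) \;\ge\; \nu(f) + \nu(g) - 1 \;>\; \max\bigl(\nu(f), \nu(g)\bigr),
$$
valid whenever $[f,g] \ne \id$, since $\nu(f), \nu(g) \ge 2$.

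Suppose for contradiction that $H$ is non-abelian. Since $H$ is free, it contains two elements $g_1, g_2$ that generate a free subgroup of rank two. Set $h_0 := g_2$ and $h_{n+1} := [g_1, h_n]$. A standard induction in the free group $G$ (using that centralizers of non-trivial elements in a free group are cyclic) shows that $h_n \notin \langle g_1 \rangle$ for every $n$, so each $h_n$ is non-trivial in $G$ and each pair $h_n, h_{n+1}$ fails to commute. In particular every $h_n$ is a non-trivial analytic diffeomorphism of $\Sc$, and the orders of tangency $\nu(h_n)$ are strictly increasing, hence $\nu(h_n) \to \infty$.

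To reach a contradiction, I would port the endgame of Case 1 to the sequence $(h_n)$. The word-length of $h_n$ in the generators of $G$ grows at most geometrically; since $h_n'(x_0) = 1$ and $x_0 \in \NE$, the sum of intermediate derivatives at $x_0$ along any reduced expression of $h_n$ is bounded by this length. Proposition~\ref{bound-complex} then allows one to choose $r_n \to 0$ inversely proportional to the word-length so that each $h_n$ is analytic with bounded distortion and sup norm on $U_{r_n}^{\mathbb{C}}(x_0)$. Applying Cauchy estimates to the rescaled maps
$$
\widetilde h_n(y) := \tfrac{1}{r_n}\bigl(h_n(x_0 + r_n y) - x_0\bigr)
$$
on the disc $U_1^{\mathbb{C}}(0)$, the fact that $\nu(h_n) \to \infty$ forces $\widetilde h_n \to \id$ uniformly on $U_{1/2}^{\mathbb{C}}(0)$, with the tail of the Taylor series from order $\nu(h_n)$ onwards controlled by a constant times $(1/2)^{\nu(h_n)}$. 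The pair $(\widetilde h_n, \widetilde h_{n+1})$ still generates a free rank-two group, so Proposition~\ref{p:to-id} (after a harmless further rescaling to bring the disc back to radius one) produces iterated commutators of these maps converging to the identity in $C^1$ while each remains a non-trivial analytic diffeomorphism. From here the argument is exactly as at the end of Case 1: Proposition~\ref{p:affine} in the $C^1$ version of Remark~\ref{r:C1}, together with Sacksteder's theorem as extended in~\cite{DKN}, produces local translations in the $C^1$-closure of $G$ near some point of the circle, which allows one to map $x_0$ arbitrarily close to a hyperbolic repelling fixed point of an element of $G$ with derivative bounded away from zero, contradicting $x_0 \in \NE$. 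The main technical obstacle will be this rescaling: ensuring simultaneously that each $h_n$ extends analytically to $U_{r_n}^{\mathbb{C}}(x_0)$ with sup-norm control, and that $\widetilde h_n - \id$ is small there, which comes down to balancing the geometric growth of the word-length of $h_n$ against the growth of $\nu(h_n)$ through the non-expandability of $x_0$.
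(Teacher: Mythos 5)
Your proof is correct in outline but takes a genuinely different route from the paper, and it is more complicated than it needs to be.

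The paper's proof is two lines: it invokes Nakai's lemma directly — if the group of analytic germs $\mathrm{Stab}_G(x_0)$ were non-abelian, it would contain a flow in its closure — and then discards that possibility by the same Sacksteder/non-expandability contradiction used at the end of Case~1. You instead re-derive the needed ``flow in the closure'' from scratch: you build an explicit iterated-commutator sequence $h_n = [g_1,[g_1,\dots,g_2]\dots]$ inside the stabilizer, control it via the order of tangency $\nu(h_n)\to\infty$ and Cauchy estimates, and then feed the rescaled pair into Propositions~\ref{p:to-id} and~\ref{p:affine}. Both routes pass through the same endgame contradiction, and both ultimately rest on the Nakai/Rebelo circle of ideas (yours through Proposition~\ref{p:affine}, which cites Nakai in its proof), so neither is strictly more self-contained; but your route stays within the machinery the paper has already set up rather than importing a new external lemma.

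That said, your argument carries a lot of superfluous weight. You correctly note that every nontrivial $g$ in the stabilizer $H$ satisfies $g(x_0)=x_0$ and $g'(x_0)=1$ (using non-expandability for both $g$ and $g^{-1}$). But once this is known, any \emph{single fixed} such $g$, rescaled by $\tilde g_\delta(y):=\frac{1}{\delta}\big(g(x_0+\delta y)-x_0\big)$, already converges to the identity on $U_1^{\mathbb{C}}(0)$ as $\delta\to 0$: this follows directly from the Taylor expansion $g(z)=z+O((z-x_0)^2)$, with no need for Proposition~\ref{bound-complex}, word-length bookkeeping, the bracket inequality $\nu([f,g])\ge\nu(f)+\nu(g)-1$, or Cauchy estimates. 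So the entire construction of the sequence $h_n$ and the tangency-order growth is redundant: if $H$ were non-abelian, you could take any two non-commuting $a,b\in H$, rescale both at a common sufficiently small scale $\delta$, and apply Propositions~\ref{p:to-id} and~\ref{p:affine} (and Sacksteder) directly to the pair $(\tilde a_\delta,\tilde b_\delta)$. The one subtle point you handle implicitly but should make explicit if writing this up is the common-scale issue for $\tilde h_n$ and $\tilde h_{n+1}$; your ``harmless further rescaling'' parenthetical is the right instinct, but in the simplified version it disappears altogether since $a$ and $b$ are fixed. Finally, your free-group combinatorics (that $h_n\notin\langle g_1\rangle$, hence $h_n\neq\id$ and $h_n,h_{n+1}$ fail to commute) is correct but not quite ``standard''; it uses that in a free group an element cannot be conjugate to its own inverse, which deserves a sentence. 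The paper sidesteps all of this by quoting Nakai.
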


\begin{proof} By a lemma of I.~Nakai (see~\cite[Section~3]{Nakai}), if this stabilizer was not Abelian, then it would 
contain a flow in its closure. However, this is impossible by the very same reasons of those at the 
end of the proof  in the preceding case. Hence, the stabilizer of~$x_0$ is Abelian, and since $G$ is 
free, it must be cyclic.
\end{proof}

Let us denote by $\tilde h$ the generator of the stabilizer of $x_0$ that is topologically 
contracting towards $x_0$ on a right neighborhood of it. Notice that $\tilde h'(x_0) =1$, 
as $x_0$ is non-expandable. Again, for each $n \geq 1$, let us 
consider the set $X_n:=\{g(x_0) \mid g\in B(n) \}$, let us denote by $x_n$ 
the point of $X_n \setminus \{x_0\}$ that is closest to $x_0$ on the right, and let $I_n:=[x_0,x_n)$. 
In what follows, we will assume that $n$ is large enough so that $I_n$ contains no 
fixed point of $\tilde h$ in its interior (this is possible as the orbit of $x_0$ is dense). 
Although the images $g(I_n)$, where $g\in B([n/2])$, are no longer pairwise 
disjoint, we have the next

\vspace{0.1cm}

\begin{lemma}\label{l:cover}
If $g_1,g_2$ in $B([n/2])$ are such that $g_1 (I_n)$ and $g_2 (I_n)$ do intersect, 
then there exists $j$ such that $g_2 = g_1 \tilde h^j$ and $|j| \leq n$.
\end{lemma}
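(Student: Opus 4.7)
My plan is to closely follow the strategy of Lemma~\ref{l:intersection}, with the modification that the nontriviality of the stabilizer of $x_0$ must be accommodated. The key observation is that any element of $B(n)$ sending $x_0$ into $[x_0,x_n)$ must in fact fix $x_0$, hence must be a power of $\tilde h$; the remaining content is free-group combinatorics controlling the exponent.

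First, I would argue that, possibly after swapping the two indices, $g_2(x_0) \in g_1([x_0,x_n))$, exactly as in the order-theoretic step of Lemma~\ref{l:intersection}. Applying $g_1^{-1}$ yields $g_1^{-1} g_2 (x_0) \in [x_0, x_n)$. Since $g_1, g_2 \in B([n/2])$, the product $g_1^{-1} g_2$ lies in $B(n)$. By the defining property of $x_n$ as the point of $X_n\setminus\{x_0\}$ closest to $x_0$ on the right, no image of $x_0$ under an element of $B(n)\setminus\{\id\}$ can lie strictly between $x_0$ and $x_n$. Consequently $g_1^{-1} g_2 (x_0) = x_0$, so $g_1^{-1}g_2$ belongs to the stabilizer of $x_0$, previously identified with $\langle \tilde h\rangle$. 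Thus $g_2 = g_1 \tilde h^j$ for some $j \in \bbZ$, settling the existence part.

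To bound $|j|$, I would invoke the elementary free-group inequality: for $\tilde h \neq \id$, the reduced length satisfies $\ell(\tilde h^j) \geq |j|$. Indeed, writing $\tilde h = u h u^{-1}$ with $h$ cyclically reduced and $|u|$ minimal, one has $\tilde h^j = u h^j u^{-1}$, whose reduced length equals $2|u| + |j|\,\ell(h) \geq |j|$. Since $\tilde h^j = g_1^{-1} g_2 \in B(n)$, this yields $|j|\le n$. I do not foresee a substantive obstacle here; the only point requiring a moment of care is the initial order-theoretic assertion that intersecting half-open arcs on the circle force, up to a swap, the left endpoint of one to lie in the other — but this is precisely the same elementary observation used in Lemma~\ref{l:intersection} and needs no new ingredient.
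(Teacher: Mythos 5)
Your argument follows the paper's proof essentially verbatim: both reduce, via the order-theoretic step from Lemma~\ref{l:intersection}, to showing $g_1^{-1}g_2$ fixes $x_0$, then identify $g_1^{-1}g_2$ with a power $\tilde h^j$, and finally bound $|j|$ using that $\tilde h^j\in B(n)$. The only difference is that you spell out the elementary free-group inequality $\ell(\tilde h^j)\ge |j|$ (via the cyclically reduced form of $\tilde h$), which the paper leaves implicit in the phrase ``which forces $|j|\le n$.''
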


\begin{proof}
The arguments of the beginning of the proof of Lemma \ref{l:intersection} show that $g_2 = g_1 \tilde f$ 
for a certain $\tilde f$ in the stabilizer of $x_0$. Writing $\tilde f$ as $\tilde h^j$ for a certain $j$, we have 
that $\tilde h^j = g_1^{-1} g_2$ belongs to $B(n)$, which forces $|j| \leq n$.
\end{proof}

\vspace{0.1cm}

\begin{lemma}\label{l:decreasing}
The length $|I_n|$ decreases to zero as $n$ tends to infinity. More precisely, 
there exists a constant $C_1'$ such that for all $n \geq 1$, 
$$|I_n| \leq \frac{C_1' n}{S_{[n/2]}}.$$
\end{lemma}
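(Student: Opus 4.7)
The plan is to mimic the proof of Lemma~\ref{l:conv-int}, adjusting for the two complications introduced by the nontrivial stabilizer of $x_0$. First, Lemma~\ref{l:cover} replaces the pairwise disjointness of the images $g(I_n)$, $g \in B([n/2])$, by the weaker statement that they cover the circle with multiplicity at most $2n+1$; this is what produces the extra factor of~$n$ in the conclusion. Second, I no longer obtain uniform control of distortion on $I_n$ for free, since for a geodesic $g = \gamma_k\cdots\gamma_1$ the intermediate images $\gamma_j\cdots\gamma_1(I_n)$ may now overlap. A direct application of Proposition~\ref{p:sum} would only give $\varkappa(g,I_n) = O(n)$, producing catastrophic exponential losses. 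The key observation, and what I expect to be the technical heart of the argument, is that the non-expandability of $x_0$ lets me invoke Proposition~\ref{bound} instead: since every prefix of $g \in B(n)$ has derivative $\leq 1$ at~$x_0$, the sum $S$ appearing there is at most~$n$, so the distortion of any $g \in B(n)$ on $U_{\delta/2}(x_0)$ is bounded by $\log 2$, where $\delta := \log 2 / (2 C_{\mathcal{G}} n)$.

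Equipped with this $(1/n)$-scale distortion control, I introduce the truncated interval
$$\bar I_n := I_n \cap [x_0, x_0 + \delta/2),$$
on which $|g(\bar I_n)| \geq \tfrac{1}{2}\, g'(x_0)\, |\bar I_n|$ holds for every $g \in B([n/2])$. Summing yields
$$\tfrac{1}{2}\, |\bar I_n|\, S_{[n/2]} \leq \sum_{g \in B([n/2])} |g(\bar I_n)|.$$
Since $\bar I_n \subset I_n$, the argument of Lemma~\ref{l:cover} applies verbatim to $\bar I_n$, so the images $g(\bar I_n)$, $g \in B([n/2])$, cover the circle with multiplicity at most $2n+1$, which bounds the right-hand side by $2n+1$. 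Hence
$$|\bar I_n| \leq \frac{4n+2}{S_{[n/2]}}.$$

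It remains to pass from $\bar I_n$ back to $I_n$. Either $|I_n| \leq \delta/2$, in which case $\bar I_n = I_n$ and the desired bound $|I_n| \leq (4n+2)/S_{[n/2]}$ follows directly; or $|\bar I_n| = \delta/2$, in which case the previous inequality forces $\delta/2 \leq (4n+2)/S_{[n/2]}$, i.e.\ $S_{[n/2]} = O(n^2)$. The latter contradicts the running hypothesis~\eqref{e:condition} for all sufficiently large~$n$, so the second alternative cannot occur in the tail. The finitely many remaining small values of~$n$ are handled by the trivial bound on~$|I_n|$, absorbed into the constant~$C_1'$. Finally, the statement $|I_n| \to 0$ itself follows from the monotonicity $|I_{n+1}| \leq |I_n|$ (inherited from the inclusion $X_n \subset X_{n+1}$) together with minimality: density of the orbit of~$x_0$ guarantees arbitrarily small intervals $(x_0, g(x_0))$ with $g \in B(N) \setminus \{\id\}$ for~$N$ large.
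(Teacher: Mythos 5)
Your proof is correct and follows essentially the same strategy as the paper: bound the multiplicity of the images $g(I_n)$, $g\in B([n/2])$, by $2n+1$ via Lemma~\ref{l:cover}, control distortion on a scale of order $1/n$ by exploiting the non-expandability of $x_0$ (so that the sum of intermediate derivatives is $\leq n$), and then show that if $I_n$ were not already inside that small neighborhood, the resulting inequality would force $S_{[n/2]}=O(n^2)$, contradicting~\eqref{e:condition}. The only cosmetic difference is your choice of comparison scale — the explicit $\delta/2=\log 2/(4C_{\mathcal{G}}n)$ from Proposition~\ref{bound} instead of the paper's $r_n'=1/\sqrt{S_{[n/2]}}$ — but the dichotomy and the quadratic contradiction are identical.
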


\begin{proof} Let $r_n' > 0$ be such that $r_n' = o(1/n)$ and 
$n/S_{[n/2]}~=~o(r_n')$. (For instance, $r_n' := 1 / \sqrt{S_{[n/2]}}$ will do.)
Let $I_n' := [x_0,x_n')$ be the interval to the right of $x_0$ of length exactly equal to 
$r_n'$. The argument of the proof of Lemma \ref{l:dc2} yields constants $C_2', \bar{C}_2'$ 
such that each $g\in B(n)$ satisfies 
\begin{equation}\label{e:dist}
\varkappa(g, I_n') \le \bar{C}_2' n r_n' \leq C_2'.
\end{equation} 
By the preceding Lemma, the multiplicity of the family of intervals $g(I_n)$, 
with $g$ ranging over $B([n/2])$, is at most $2n+1$. Therefore, 
$$
2n+1 \geq \sum_{g\in B([n/2])} |g(I_n)|.
$$ 
Assume that $I_n$ is not contained in $I_n'$. Then $g(I_n') \subset g(I_n)$ for all $g$, which yields
$$2n+1 \geq \sum_{g\in B([n/2])} |g(I_n')|,$$
Using (\ref{e:dist}) we thus obtain 
$$2n+1 \geq \sum_{g\in B([n/2])} e^{-C_2'} g'(x_0) |I_n'| = e^{-C_2'} |I_n'| S_{[n/2]}.$$
As a consequence, 
$$r_n' = |I_n'| \leq \frac{e^{C_2'}(2n+1)}{S_{[n/2]}},$$
which is impossible for large $n$ due to our assumption (\ref{e:condition}). 
Therefore, $I_n$ is contained in $I_n'$ for large $n$, which easily allows 
to show that its length goes to zero with the claimed speed. 
\end{proof}

\vspace{0.1cm}

Starting from this point, the proof of Proposition \ref{p:quadratic} proceeds 
along the lines of that given for Proposition \ref{p:linear}. 
Namely, assuming that (\ref{e:condition-q}) holds, we now 
work on neighborhoods about $x_0$ of radius $r_n'$ instead of~$r_n$. Moreover,
although the elements $f_n \in B(n)$ sending $x_0$ into $x_n$ are no longer uniquely 
defined, any element in $B(n)$ sending $x_0$ into~$x_n$ will actually work. 
The final outcome will be the existence of $g \in G$ such that $g'(x_0) > 1$, which is 
contrary to our hypothesis.
We leave the details to the reader.

\vspace{0.21cm}

Due to the next example, it seems that non super-quadratic growth is close to be a sharp 
condition for the validity of Proposition \ref{p:quadratic}.

\begin{example} {\em Let us consider the projective minimal 
(real-analytic) action of $\mathrm{PSL}_2 (\mathbb{Z})$ on the circle identified 
with the projective line (see \cite[\S 5.2]{DKN-MMJ} for details). The point $(1:0)$ is 
non-expandable for this action. Moreover, the derivative at this point of an element 
$\left[\left(\begin{smallmatrix} a& b\\ c& d\end{smallmatrix}\right)\right] \in \mathrm{PSL}_2 (\mathbb{Z})$ 
is $1/(a^2 + c^2)$. Now, if such an element belongs to the ball of radius $n$ with respect to 
the system of generators   
$\left[\left(\begin{smallmatrix} 1 & 1\\ 1 & 0\end{smallmatrix}\right)\right], 
\left[\left(\begin{smallmatrix} 1 & 0\\ 1 & 1\end{smallmatrix}\right)\right]$, 
then an elementary argument shows that the absolute values of the entries $a,b,c,d$ 
are smaller than or equal to the $n^{th}$ term $F_n$ of the Fibonacci sequence. Taking 
into account the action of the stabilizer of $(1:0)$, this roughly yields the upper bound 
$$n \sum_{|a| \leq F_n, |c| \leq F_n} \frac{1}{1 + a^2 + c^2} \sim n \log(F_n) = O(n^2)$$
for the sum of derivatives of elements in $B(n)$ at this point. Actually, a finer argument shows that this 
sum is $o(n^2)$: this comes from the fact that the law of elements of continued fractions (Gauss-Kuzmin 
distribution; see, e.g.,~\cite{Ustinov} and references therein) has an infinite expectation. On the other hand, 
it seems quite plaussible that the corresponding averages grow at most logarithmically, and thus that the 
sums $S_n((1:0))$ for $\mathrm{PSL}_2(\mathbb{Z})$ are bounded from below by $n^2/\log^2 n$.

Finally, $\mathrm{PSL}_2(\mathbb{Z})$ contains an index-6 free subgroup (in two generators). 
For this subgroup, the same arguments apply.} \label{ex:psl}
\end{example}

\begin{remark} \label{r:rare}
The more involved argument above for points $x_0 \in \NE$ with nontrivial stabilizers 
allows us to obtain a direct proof for the existence of a Markov partition. Indeed, as we will show in 
the next Section, the failure of~(\ref{exp-sums}) not only leads to property $\smin$ with a 
nonempty set $\NE$, but also to the existence of a Markov partition for the dynamics. Moreover,
this Markov partition only uses the generators and their inverses, and in this sense it is better 
than the one constructed in~\cite{FK1} as a consequence of property~$\smin$ whenever 
$\NE \neq \emptyset$.

\end{remark}


\subsection{Growth of sums of derivatives along geodesics}\label{s:cones}

This section is devoted to the proof of Proposition~\ref{p:geod}. In fact, we will prove a stronger statement. 
\emr{To do this, first recall that
$$
\mC_{\gamma} = \left\{ \gamma_k \cdots \gamma_1 \mid 
\gamma_1 = \gamma \right\}.
$$
Next, for each $\gamma \in \mathcal{G}$, consider the function $S_{\gamma}:\Sc \to [0, +\infty]$ defined as
$$
S_{\gamma}(y) := \sup \left\{ \Sf(g,y) \mid y\in \mC_{\gamma} \right\},
$$
where 
\begin{equation}\label{eq:s-hat}
\Sf(g,y) := \sum_{j =0}^{k-1} (\gamma_j \cdots \gamma_1)'(y), \quad g=\gamma_k\dots\gamma_1,
\end{equation}
is the sum of the intermediate derivatives at $y$ along the geodesic $g$.}

\vspace{0.1cm}

\begin{proposition}\label{p:suma-divergente}
Let $G \subset \Diff^{\omega}_+(\Sc)$ be a finitely-generated free group acting minimally and 
having non-expandable points. If there exist $\gamma \in\mathcal{G}$ and $y\in\Sc$ such that 
$S_{\gamma} (y)<\infty$, then $G$ satisfies property~\smin.
\end{proposition}

\vspace{0.1cm}

For the proof, first notice that control of distortion arguments ({\em c.f.,} Proposition~\ref{bound}) easily yield that the function $S_{\gamma}$ is continuous and the set 
$$M_{\gamma} := \left\{ y \in \mathbb{S}^1 \mid S_{\gamma} (y) < \infty \right\}$$ 
is open, for every 
$\gamma \in \mathcal{G}$. In the sequel, we will actually be mostly concerned with the functions 
\emr{$$
\bS_{\gamma} (y):=\max_{\widetilde{\gamma} \neq \gamma^{-1}} S_{\widetilde{\gamma}}(y)= \sup \left\{ \Sf(g,y) \mid g \in \tC_{\gamma} \right\},
$$ 
where 
$$
\tC_{\gamma} := \left\{ \gamma_k \cdots \gamma_1 \mid 
\gamma_1 \neq \gamma^{-1} \right\} = \bigcup_{\widetilde{\gamma} \neq \gamma^{-1}} \mC_{\widetilde{\gamma}},
$$}as well as with the sets 
$$
\widetilde{M}_{\gamma}:= \left\{ y\in\Sc \mid \bS_{\gamma}(y)<\infty \right\} 
= \bigcap_{\widetilde{\gamma}\neq \gamma^{-1}} M_{\widetilde{\gamma }}.
$$

\begin{example}\label{ex:torus}
{\em The free group in two generators may be seen as the Fuchsian group associated to an hyperbolic punctured torus. 
A fundamental domain of its action on the hyperbolic disc is an absolute quadrangle, and the maps $f$ and $g$ are respectively 
gluing its opposite sides (see Fig.~\ref{f:torus} on the right). The vertices of this quadrangle divide the absolute circle into four arcs.\\ 
\indent The dynamics of the action of $\langle f,g \rangle$ on the absolute circle is Shottky-like: each map in 
$\mathcal{G} := \{ f,g,f^{-1},g^{-1} \}$ sends three of these arcs into one of them and the other one into 
the remaining three. It is thus very natural, and not so difficult to see, that these four arcs turn out to be 
$\bM_f, \bM_g, \bM_{f^{-1}}$ and $\bM_{g^{-1}}$. More precisely, the arc into which $\gamma \in \mathcal{G}$ 
contracts three of them is~$\bM_{\gamma}$. Indeed, if we apply any geodesic that does not start with 
$\gamma^{-1}$, we contract the corresponding arc further and further; moreover, one can check (though this is 
not an immediate computation) that the associated sum of derivatives stay bounded for any point inside the arc.}
\end{example}

\vspace{0.1cm}

For the rest of this section, the goal is to show that the situation in the general case is quite alike the one 
that we have just described in the preceding example. More precisely, the sets $\bM_{\gamma}$ will turn out 
to be unions of finitely many intervals, so that they make a finite partition of the circle, and applying 
$\gamma^{-1}$ on each $\bM_{\gamma}$ will lead to an expansion-like Markovian dynamics. 
In this way, the dynamics of the action of $G$ will turn out to be quite similar to that of the ``degenerated'' 
Schottky group $\mathbb{F}_2 \subset \PSL_2(\bbZ)$ described above.

\begin{figure}[h]
\begin{center}
\includegraphics[scale=0.6]{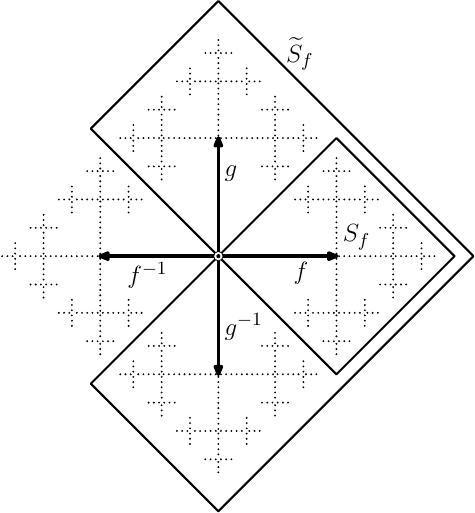}\qquad 
\hspace{0.5cm}
\includegraphics{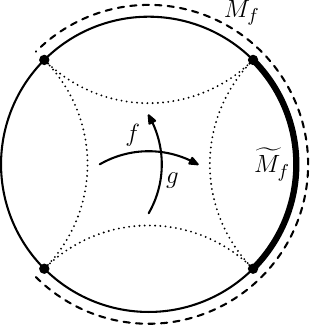}
\end{center}
\caption{On the left: the possible directions of geodesics for $S_f$ and $\bS_f$ in the free group $\langle f,g \rangle$. 
On the right: the sets $M_f$ and $\widetilde{M}_f$ for a Fuchsian group corresponding to a punctured torus.}\label{f:torus}
\end{figure}

\begin{lemma}\label{l:images} The following properties hold:
\begin{enumerate} 
\item For all $\gamma\in\mathcal{G}$, one has $M_{\gamma}\cap \bM_{\gamma} = \bigcap_{\widetilde{\gamma}\in\mathcal{G}} 
M_{\widetilde{\gamma}} = \emptyset$; 
also, for any two different $\gamma_1, \gamma_2$ in $\mathcal{G}$, the sets 
$\bM_{\gamma_1}$ and $\bM_{\gamma_2}$ are disjoint. 
\item For all $\gamma \in\mathcal{G}$, the image $\gamma(M_{\gamma})$ coincides with $\bM_{\gamma}$.
\item\label{i:img} For all $\gamma_1,\gamma_2$ in $\mathcal{G}$ such that $\gamma_1\neq \gamma_2^{-1}$, 
one has $\gamma_1(\bM_{\gamma_2})\subset \bM_{\gamma_1}$.
\item If at least one of the sets $M_{\gamma}$ is nonempty, then all sets 
$\bM_{\gamma}$ are nonempty.
\end{enumerate}
\end{lemma}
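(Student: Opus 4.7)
The plan is to treat part (2) as the main calculation and derive the others from it; part (1) is essentially a reformulation of Proposition~\ref{p:geodesic-free} together with some set-theoretic bookkeeping. Specifically, at every $y \in \Sc$ that proposition produces geodesics (based at the identity) with arbitrarily large sum of intermediate derivatives at $y$; after extracting a subsequence of these geodesics whose first letter takes a common value $\gamma \in \mathcal{G}$, we read off $S_\gamma(y) = +\infty$, so $y \notin M_\gamma$ and hence $\bigcap_{\widetilde\gamma \in \mathcal{G}} M_{\widetilde\gamma} = \emptyset$. The remaining equalities in part (1) are formal: from $\bM_\gamma = \bigcap_{\widetilde\gamma \neq \gamma^{-1}} M_{\widetilde\gamma}$, intersecting $\bM_{\gamma_1}$ and $\bM_{\gamma_2}$ for distinct $\gamma_1, \gamma_2 \in \mathcal{G}$ excludes only $\gamma_1^{-1}$ and $\gamma_2^{-1}$ respectively, and because these two excluded elements are distinct their complements together cover all of $\mathcal{G}$, giving $\bM_{\gamma_1} \cap \bM_{\gamma_2} = \bigcap_{\widetilde\gamma \in \mathcal{G}} M_{\widetilde\gamma} = \emptyset$.

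For part (2), I would fix $y \in M_\gamma$, set $z := \gamma(y)$, and unpack $S_{\widetilde\gamma}(z)$ for each $\widetilde\gamma \neq \gamma^{-1}$. Any reduced infinite word $\widetilde\gamma_1 \widetilde\gamma_2 \cdots$ with $\widetilde\gamma_1 = \widetilde\gamma$ can be concatenated with $\gamma$ on the right to yield a reduced word at $y$, precisely because $\widetilde\gamma \neq \gamma^{-1}$. The chain rule then gives, for $j \geq 1$,
\[
(\widetilde\gamma_j \cdots \widetilde\gamma_1)'(z) \cdot \gamma'(y) = (\widetilde\gamma_j \cdots \widetilde\gamma_1\gamma)'(y),
\]
and summing while noticing that each $\widetilde\gamma_j \cdots \widetilde\gamma_1\gamma$ is a prefix of a geodesic starting with $\gamma$ at $y$,
\[
\sum_{j \geq 1} (\widetilde\gamma_j \cdots \widetilde\gamma_1)'(z) \;\leq\; \frac{S_\gamma(y)}{\gamma'(y)}.
\]
Adding the $j = 0$ identity term yields a finite bound, so $z \in \bM_\gamma$. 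For the reverse inclusion, given $z \in \bM_\gamma$ I would set $y := \gamma^{-1}(z)$ and split any geodesic at $y$ whose first letter is $\gamma$ into the identity term plus $\gamma'(y)$ times a sum over a geodesic at $z$ whose first letter (by reducedness of the original) is automatically different from $\gamma^{-1}$; that latter sum is bounded by $\bS_\gamma(z) < \infty$, giving $y \in M_\gamma$.

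Parts (3) and (4) are short consequences. If $y \in \bM_{\gamma_2}$ and $\gamma_1 \neq \gamma_2^{-1}$, then the definition of $\bM_{\gamma_2}$ gives $S_{\gamma_1}(y) < \infty$, so $y \in M_{\gamma_1}$, and part (2) gives $\gamma_1(y) \in \bM_{\gamma_1}$; this is part (3). For part (4), a nonempty $M_{\gamma_0}$ yields $\bM_{\gamma_0} = \gamma_0(M_{\gamma_0}) \neq \emptyset$ by part (2), and then part (3) fills in every $\bM_{\gamma_1}$ with $\gamma_1 \neq \gamma_0^{-1}$. The remaining set $\bM_{\gamma_0^{-1}}$ is handled in one more step: since the rank is at least $2$, we may pick some $\gamma' \in \mathcal{G} \setminus \{\gamma_0, \gamma_0^{-1}\}$; the set $\bM_{\gamma'}$ is already known to be nonempty, and $\gamma_0^{-1} \neq (\gamma')^{-1}$, so applying part (3) once more yields $\bM_{\gamma_0^{-1}} \neq \emptyset$.

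The only subtle point I anticipate is the chain-rule bookkeeping in part (2): keeping straight that the $j=0$ term in the definition of $S_\gamma$ is the identity contribution, and that reducedness of the concatenated word corresponds exactly to the hypothesis $\widetilde\gamma \neq \gamma^{-1}$. Everything else is formal manipulation of the defining intersections and a single application of Proposition~\ref{p:geodesic-free}.
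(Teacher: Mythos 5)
Your proof is correct and follows the same route as the paper's terse proof: Proposition~\ref{p:geodesic-free} together with a pigeonhole extraction on the first letter gives $\bigcap_{\widetilde{\gamma}\in\mathcal{G}} M_{\widetilde{\gamma}}=\emptyset$ (and hence the disjointness of the $\bM_{\gamma}$'s), the chain rule yields item (2), and items (3) and (4) are set-theoretic consequences of (1) and (2). One remark worth recording: the leftmost equality $M_{\gamma}\cap\bM_{\gamma}=\emptyset$ in item (1), as printed, is problematic, since $\gamma\neq\gamma^{-1}$ in a free group forces $\bM_{\gamma}\subset M_{\gamma}$ and hence $M_{\gamma}\cap\bM_{\gamma}=\bM_{\gamma}$, which by item (4) is generically nonempty; the intended statement is presumably $M_{\gamma^{-1}}\cap\bM_{\gamma} = \bigcap_{\widetilde{\gamma}\in\mathcal{G}}M_{\widetilde{\gamma}}$, a definitional identity, so your decision to prove only the substantive content (the emptiness of the full intersection) is exactly right.
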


\begin{proof}
The first property follows from Proposition~\ref{p:geodesic-free}, 2) and 3) are immediate 
consequences of the definition, and 4) follows as a combination of them.
\end{proof}

\indent Take any connected component $I$ of a set $\bM_{\gamma}$. We will consider the dynamics of 
$I$ under the action of~$G$. We start by studying how the boundedness of $\bS_{\gamma}$ disappears 
at the endpoints of~$I$. To do this, we will consider the images of $I$ until the ``first-return''.

\begin{definition} Let $\gamma \in \mathcal{G}$ and $I$ as above. An element $g \in G$ will be said to be 
{\em $(\gamma,I)$-admissible} if it may be written in the geodesic form $g=\gamma_n\cdots\gamma_1$, 
where $\gamma_1\neq \gamma^{-1}$ and all the intermediate 
images $\gamma_k\cdots\gamma_1(I)$, $k=1,\ldots,n-1$,  are disjoint from~$I$. 
A $(\gamma,I)$-admissible element $g$ is a $(\gamma,I)$-\emph{first-return} if $g(I)$ intersects~$I$.
\end{definition}

In what follows, the generator $\gamma$ and the interval $I$ will be fixed. 
Accordingly, we will just speak about admissible elements and first-return maps. 

\begin{lemma}\label{l:disj}
The images of $I$ under any two different admissible elements are pairwise 
disjoint. Moreover, if $g = \gamma_n \cdots \gamma_1$ is a first-return, 
then $\gamma_n = \gamma$, and $g(I)$ is a subset of $I$.
\end{lemma}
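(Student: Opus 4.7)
The plan is to split the proof into two parts, the first of which is a ``tracking'' statement used in both assertions. Namely, I would first show that for any admissible $g = \gamma_n \cdots \gamma_1$, its image satisfies $g(I) \subset \bM_{\gamma_n}$. This follows by iterating Lemma~\ref{l:images}.\ref{i:img}: starting from $I \subset \bM_\gamma$, the hypothesis $\gamma_1 \neq \gamma^{-1}$ (which is precisely the initial admissibility condition) yields $\gamma_1(I) \subset \gamma_1(\bM_\gamma) \subset \bM_{\gamma_1}$; then the reduced-word condition $\gamma_2 \neq \gamma_1^{-1}$ gives $\gamma_2\gamma_1(I) \subset \bM_{\gamma_2}$, and so on until the last letter.

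The first-return assertion follows almost immediately from this. If $g(I) \cap I \neq \emptyset$, then since $I \subset \bM_\gamma$ and $g(I) \subset \bM_{\gamma_n}$, the mutual disjointness of the sets $\bM_{\widetilde{\gamma}}$ given by Lemma~\ref{l:images}.1 forces $\gamma_n = \gamma$. Moreover, $g(I)$ is connected (as $g$ is a homeomorphism and $I$ is an interval) and $g(I) \cup I$ is a connected subset of $\bM_\gamma$ meeting~$I$; by maximality of the connected component~$I$, one concludes $g(I) \subset I$.

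For the disjointness of images of distinct admissible elements, I would argue by induction on the sum of word lengths. Let $g_1 = \gamma_n \cdots \gamma_1$ and $g_2 = \delta_m \cdots \delta_1$ be distinct admissible elements and suppose, for contradiction, that $g_1(I) \cap g_2(I) \neq \emptyset$. If $\gamma_n \neq \delta_m$, the tracking property combined with Lemma~\ref{l:images}.1 places the two images in disjoint sets, contradicting the assumption. Otherwise $\gamma_n = \delta_m$, and applying $\gamma_n^{-1}$ to the intersection yields $g_1'(I) \cap g_2'(I) \neq \emptyset$, where $g_j'$ denotes $g_j$ with its leftmost letter deleted. If $g_1' = g_2'$, then $g_1 = g_2$, contradicting distinctness; if both are nontrivial, then they are still admissible (the conditions on the rightmost letter and on intermediate images are inherited from $g_1, g_2$), so the inductive hypothesis gives a contradiction since the total word length has decreased by two; and if say $g_1'$ is trivial, then $n=1$, $m \geq 2$, and $g_2'(I)$ meets $I = g_1'(I)$, contradicting admissibility of $g_2$, since $g_2'(I)$ is one of the intermediate images of $g_2$.

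The only delicate point to verify carefully will be that admissibility is preserved under the inductive reduction and the correct handling of the degenerate cases where one of the reduced elements becomes the identity; apart from this bookkeeping, the proof is a direct unpacking of the structural properties collected in Lemma~\ref{l:images}, so no substantial obstacle is expected.
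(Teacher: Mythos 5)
Your proposal is correct and takes essentially the same route as the paper: both prove the tracking claim $g(I)\subset\bM_{\gamma_n}$ by iterating Lemma~\ref{l:images}.\ref{i:img}), use disjointness of the $\bM_{\widetilde\gamma}$'s together with maximality of the connected component $I$ for the first-return assertion, and establish disjointness by successively cancelling matching leftmost letters until a contradiction with admissibility appears. The only difference is cosmetic — you package the cancellation as a formal induction on total word length with explicit handling of the degenerate cases, whereas the paper describes the same cancellation as an iterative process terminating when one word empties.
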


\begin{proof}
Assume that the images of $I$ under two admissible elements 
$g = \gamma_n \cdots \gamma_1$ and $\overline{g} = \overline{\gamma}_{m} \cdots\overline{ \gamma}_1$ 
intersect. By Lemma~\ref{l:images}.\ref{i:img}), we have $g (I)\subset \bM_{\gamma_n}$ and 
$\overline{g}(I) \subset \bM_{\overline{\gamma}_{m}}$. Hence, $\bM_{\gamma_n} \cap 
\bM_{\overline{\gamma}_{m}} \neq \emptyset$, which by Lemma~\ref{l:images}.1) implies that 
$\gamma_n = \overline{\gamma}_{m}$. Therefore, we can remove these two letters, thus obtaining 
shorter admissible elements sending $I$ into non-disjoint intervals. This process stops when one 
of the words becomes empty. If the other word becomes empty simultaneously, then they are 
equal, that is, $g = \overline{g}$. Otherwise, a prefix of one of these words is a first-return, 
and this contradicts the definition of an admissible element.

Now, if $g=\gamma_n\cdots\gamma_1$ is a first-return, then we have 
$\bM_{\gamma_n}\bigcap \bM_{\gamma} \supset g(I)\bigcap I \neq \emptyset$. 
By Lemma \ref{l:images}.3), we must necessarily have $\gamma_n = \gamma$. Finally, 
since the interval $g(I)$ is a subset of $\bM_{\gamma}$ and $I$ is a connected component of 
$\bM_{\gamma}$, we must also have $g(I) \subset I$.
\end{proof}

\vspace{0.1cm}

We next come up with the key step of the proof.

\vspace{0.2cm}

\begin{lemma}\label{l:ends}
There exist first-returns $g_-,g_+$ that fix respectively the left and 
right endpoints $x_{-},x_{+}$ of $I$. 
\end{lemma}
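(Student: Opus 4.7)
The plan is to exploit the divergence of $\bS_\gamma$ at the endpoints of $I$ to extract first-returns fixing them, and then handle distinctness separately.

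Since $I$ is a connected component of the open set $\bM_\gamma = \bigcap_{\tilde\gamma \neq \gamma^{-1}} M_{\tilde\gamma}$, the endpoint $x_+$ lies outside $\bM_\gamma$, and hence $\bS_\gamma(x_+) = \infty$. Pick some $\tilde\gamma \neq \gamma^{-1}$ with $S_{\tilde\gamma}(x_+) = \infty$ together with an infinite reduced geodesic $\gamma_1 \gamma_2 \cdots$ starting with $\gamma_1 = \tilde\gamma$ whose partial sums $\sum_{j \leq N}(\gamma_j \cdots \gamma_1)'(x_+)$ diverge.

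First I produce the first first-return. By Lemma~\ref{l:disj}, the images $g(I)$ over distinct admissible elements $g$ are pairwise disjoint subsets of $\Sc$, so they sum to at most $|\Sc|$; feeding these disjoint intermediate images into Proposition~\ref{p:sum} yields a uniform distortion bound on $I$ for every admissible element, whence $\sum_{g \text{ admissible}} g'(x_+) \leq C/|I|$ for some constant. Since the partial sums along our geodesic diverge, not every prefix is admissible: let $n_1$ be the smallest index for which $h_1 := \gamma_{n_1}\cdots\gamma_1$ satisfies $h_1(I) \cap I \neq \emptyset$, so that $h_1$ is a first-return.

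The key step is to show $h_1(x_+) = x_+$. The tail $\gamma_{n_1+1} \gamma_{n_1+2} \cdots$ is itself a reduced geodesic starting with a letter $\neq \gamma^{-1}$ (since $\gamma_{n_1} = \gamma$), and the chain-rule factorisation $(\gamma_j \cdots \gamma_1)'(x_+) = (\gamma_j \cdots \gamma_{n_1+1})'(h_1(x_+)) \cdot h_1'(x_+)$ forces the tail partial sums at $h_1(x_+)$ to diverge; consequently $\bS_\gamma(h_1(x_+)) = \infty$, and so $h_1(x_+) \notin \bM_\gamma$. But $h_1(x_+) \in \overline{h_1(I)} \subset \bar I$ and the open set $I$ lies inside $\bM_\gamma$, so $h_1(x_+) \in \partial I = \{x_-, x_+\}$. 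Orientation preservation together with $h_1(I) \subset (x_-, x_+)$ gives $h_1(x_-) < h_1(x_+) \leq x_+$, and the case $h_1(x_+) = x_-$ would force $h_1(x_-) < x_-$, contradicting $h_1(\bar I) \subset \bar I$. Hence $h_1(x_+) = x_+$, and we set $g_+ := h_1$. A symmetric argument at $x_-$, in which $h(x_-) = x_+$ is geometrically impossible for any first-return $h$ (it would force $h(x_+) > x_+$), produces a first-return $g_-$ with $g_-(x_-) = x_-$.

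Finally, for $g_+ \neq g_-$: if $g_+ = g_-$, then this common element fixes both endpoints and, being an orientation-preserving first-return with image in $I$, must satisfy $g_+(I) = (g_+(x_-), g_+(x_+)) = I$. By Lemma~\ref{l:disj}, any other first-return would have image in $I$ disjoint from $g_+(I) = I$, hence empty; so $g_+$ would be the unique first-return. The main expected obstacle is ruling out this degenerate scenario: the natural route is to exploit the freedom in choosing the initial letter $\tilde\gamma$ at $x_+$ and the corresponding letter at $x_-$---since $\bS_\gamma$ diverges at each endpoint, typically through more than one direction, the two constructions yield distinct reduced words---and, in the residual single-direction case, to invoke the rigidity of a real-analytic free action (in particular the infinite cyclic structure of endpoint stabilisers) to show that no single element can play both parabolic-type roles simultaneously.
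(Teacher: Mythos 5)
Your observation that a first-return $h$ with $\bS_\gamma(h(x_+)) = \infty$ must actually fix $x_+$ is clean and correct: since $h(\overline{I})\subset\overline{I}$ and the open interval $I$ lies inside $\bM_\gamma$, the divergence forces $h(x_+)\in\partial I=\{x_-,x_+\}$, and orientation preservation rules out $h(x_+)=x_-$. This is a nice shortcut for the final identification, and it differs from (and is arguably slicker than) what the paper does at the corresponding point. However, as written the proposal has two genuine gaps.

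First, the step ``pick an infinite reduced geodesic starting with $\tilde\gamma$ whose partial sums diverge at $x_+$'' is not justified, and it does not follow formally from $\bS_\gamma(x_+)=\infty$. That hypothesis only says partial sums over \emph{finite} geodesics are unbounded; in a weighted tree of bounded branching it is perfectly possible for node-weights to be unbounded while every infinite branch has a convergent weight (the growth being carried by ever-deeper side branches). Extra structure from the dynamics is needed to rule this out, and you do not supply it. The paper avoids the issue entirely: it singles out the \emph{finitely many} first-returns $g_1,\dots,g_m$ with $|g_j(I)|\ge\tfrac12 e^{-C_3}|I|$, assumes none of them fixes $x_+$, observes that $\widetilde{U}$ is then compactly contained in $I$ so $\bS_\gamma$ is bounded on it by some $C_S$, and runs an induction on word length: any non-admissible word has a first-return prefix that is either one of the $g_j$ (landing in $\widetilde{U}$, hence contributing at most $e^{C_3}C_S$) or a ``small'' first-return with derivative $\le 1/2$ on $I$ (contributing a factor $1/2$ to the tail). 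This bootstrap yields a finite bound on $\bS_\gamma(x_+)$, a contradiction. You would need either to prove the divergent-branch claim by some compactness argument, or to replace it with a contraction/bootstrap of this kind; as it stands this is a real hole, not a routine detail.

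Second, the proof of $g_+\neq g_-$ is not completed. You correctly reduce to the degenerate case $g_+=g_-=:g$ with $g(I)=I$ and $g$ the unique first-return, but then only gesture at ``choosing a different initial letter'' and at ``rigidity of real-analytic free actions'', without an actual argument. The paper's argument is concrete and uses minimality: if $g$ were the unique first-return and $g(I)=I$, then every element of $G$ would decompose (when traced from $I$) as a power of $g^{\pm1}$ followed by an admissible word, so any fundamental domain $J\subset I$ for the $g$-action would be wandering for $G$, contradicting density of $G$-orbits. You should replace the vague final paragraph with an argument of this type.
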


\vspace{-0.432cm}

\emr{
\begin{lemma}\label{l:different}
The elements $g_-$ and $g_+$ above are different.
\end{lemma}
}

\vspace{0.2cm}

Before passing to the proofs, let us notice that this is exactly what happens for the Fuchsian 
group corresponding to a punctured torus:
\begin{example}\label{ex:g-plus}
{\em Let $G = \langle f,g \rangle$ be the group of Example~\ref{ex:torus}. 
Then, the left and right endpoints of $I := \bM_f$ are fixed by $fgf^{-1}g^{-1}$ 
and $fg^{-1}f^{-1}g$, respectively (see Fig.~\ref{f:tree}).}
\end{example}

\begin{figure}[h]
\begin{center}
\includegraphics[scale=0.6]{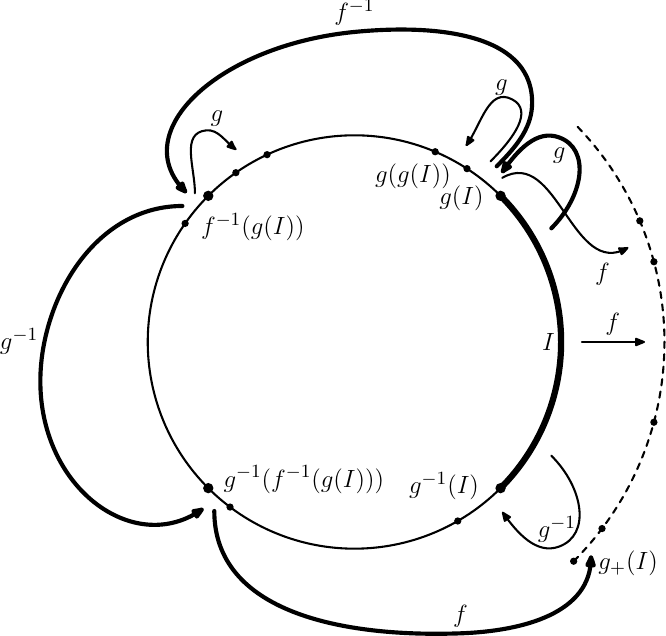} \qquad
\includegraphics[scale=0.6]{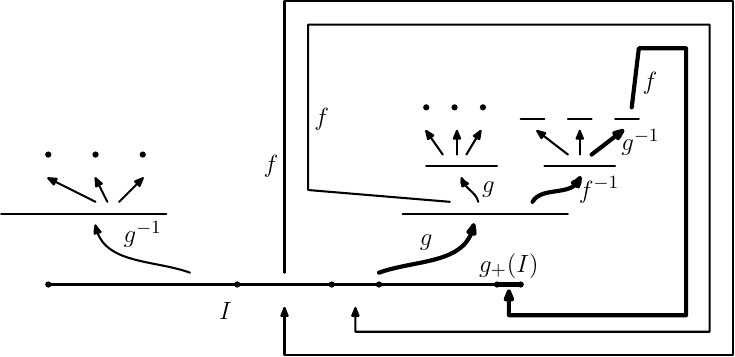}
\end{center}
\caption{On the left: (some) of the admissible images and first returns of the interval $I := \bM_{f}$ for the Fuchsian group 
considered in Examples~\ref{ex:torus} and~\ref{ex:g-plus}. On the right: the corresponding abstract tree of (disjoint!) 
admissible images. In both pictures, the composition $g_+$ is shown by bold arrows.}\label{f:tree}
\end{figure}

\begin{proof}[Proof of Lemma~\ref{l:ends}]
Let us show that there exists a first-return $g_+$ that fixes $x_+$. To do this, first notice that due 
to Proposition \ref{p:sum} and Lemma~\ref{l:disj}, we have a uniform control for the distortion on
$I$ of all admissible words: there exists $C_3>0$ such that for every admissible word $g$, one has 
$\varkappa(g; I)<C_3$. Moreover, the sum of intermediate derivatives corresponding to every 
admissible word at all points of $I$ can be bounded from above by $C_4:=\frac{1}{|I|} e^{C_3}$.

Since the images of $I$ under first-returns are pairwise disjoint, there is only a finite number of 
first-returns $g$ such that $|g(I)|\ge \frac{1}{2}e^{-C_3} |I|$. Let $g_1,\dots,g_m$ be the set 
of all these first-returns. We will show that in fact one of these elements fixes~$x_+$. The proof 
proceeds by contradiction: assuming otherwise,  we will show that $\bS_{\gamma}(x_+)$ is finite.

From now on, assume that none of $g_1,\ldots,g_m$ fixes $x_+$, and consider the sets 
$$
U:= \bigcup_{j=1}^m g_j (\overline{I}), 
\qquad 
\tU:= \bigcup_{j=1}^m g_j (\overline{I}\setminus U)\subset U.
$$
Notice that the set $\tU$ is bounded away from the endpoints of $I$. Indeed, $\tU$ is bounded 
away from the right endpoint $x_+$ by our assumption. Next, either for each $j$ we have 
$g_j (x_-)\neq x_-$, and then the same applies to the left endpoint; or for some~$j$ one 
has $g_j (x_-)=x_-$, and in this case by definition the set $\tU$ cannot intersect 
$g_j^2(\overline{I})~=~[x_-, g_j^2(x_+)]$, hence it is also bounded away from~$x_-$.

Now, since $\tU$ is bounded away from the endpoints of $I$, there 
must exist a finite constant $C_S$ bounding $\bS_{\gamma}$ on $\tU$:
$$
C_S:=\sup\nolimits_{x\in \tU} \bS_{\gamma}(x) < \infty.
$$
We will next prove that for every element $g = \gamma_n \cdots \gamma_1  \, \emr{\in \tC_{\gamma}}$, 
the sum of the intermediate derivatives at each point of $\overline{I}\setminus U$ (including $x_+$) 
does not exceed some constant, thus yielding a contradiction. More precisely, we will show that
\emr{
$$
\Sf(g,y) \le 2\max \big\{ e^{C_3} \cdot C_S,C_4 \big\}, \quad \forall y\in \overline{I}\setminus U, \quad \forall g\in\tC_{\gamma}.
$$
}To do this, we proceed by induction on the length of $g$. The case $n=1$ is evident, as well as 
the case where $g$ is an admissible element. Next, if $g$ is not an admissible element, then it 
contains a prefix $\overline{g} = \gamma_k \cdots\gamma_1$ that is a first-return. Now notice that 
\emr{
\begin{equation}\label{e:decomposition-1}
\Sf(g,y)=\overline{g}'(y) \cdot 
\Sf(\gamma_n\dots\gamma_{k+1}, \overline{g}(y))  + \Sf(\overline{g},y).
\end{equation}
}Since $\overline{g}$ is a first-return, the second \emr{summand} in the right-side expression above 
does not exceed~$C_4$. To estimate the first \emr{one}, we need to consider two possibilities. 

The first possibility is that $\overline{g}$ coincides with one of the $g_j$'s. In this case, we notice that 
$\overline{g}(y)\in\tU$, and hence $\emr{\Sf(\gamma_n\dots\gamma_{k+1}, \overline{g}(y)) \le C_S}$. We claim that the derivative $\overline{g}'(y)$ 
doesn't exceed $e^{C_3}$, and thus the right-side expression of~\eqref{e:decomposition-1} is bounded by 
$e^{C_3} \cdot C_S + C_4$. Indeed, on the one hand, as $\overline{g} (I) \subset I$, there must be a point 
in $I$ at which the derivative is less than or equal to 1. On the other hand, we have 
$\varkappa (\overline g , I) \leq C_3$. The claim easily follows from this.

The second possibility is that $\overline{g}$ does not coincide with any of the $g_j$'s. 
In this case, we have the following uniform upper bound for its derivative:
$$
\overline{g}'(x) \le e^{C_3} \frac{|\overline{g}(I)|}{|I|} \le \frac{1}{2} \quad \forall x\in I.
$$
As a consequence,
\emr{
$$
\Sf(\gamma_n\dots\gamma_1,y) = \overline{g}'(y)\cdot \Sf(\gamma_n\dots\gamma_{k+1},y) + \Sf(\overline{g},y) \le 
\frac{1}{2}\, \Sf(\gamma_n\dots\gamma_{k+1},y) + C_4.
$$
}Since $\overline{g}(y)\in \overline{I}\setminus U$, the induction hypothesis yields that 
\emr{$\Sf(\gamma_n\dots\gamma_{k+1},y)$}
doesn't exceed \hspace{0.04cm} 
$\max \{ e^{C_3} \cdot C_S,C_4 \}$. Thus, we have obtained the desired upper bound: 
$$
\max \{ e^{C_3} \cdot C_S,C_4 \} + C_4 \le 2 \max \{ e^{C_3} \cdot C_S,C_4 \}.
$$

This concludes the proof by contradiction, as we have shown that $\bS_{\gamma}(x_+)$ is finite. This contradiction 
came from the assumption that no $g_j$ fixes $x_+$. Thus, there exists a first-return $g_+$ among 
$g_1, \ldots, g_m$ that fixes $x_+$. Similarly, there must exist a first-return~$g_{-}$ among 
$g_1, \ldots, g_m$ that fixes $x_{-}$. 
\end{proof}

\vspace{0.2cm}

\emr{In what follows (particularly for the proof of Lemma \ref{l:different}), we will need one more (algebraic) tool. Roughly speaking, as $g_+$ is a map that fixes~$x_+$,
in order to study the dynamics of $G$ near $x_+$ it is worth decomposing the action of an element of $G$ by
 ``extracting'' from it the maximal possible power of~$g_+$. This is done by the following}
 
 \vspace{0.01cm}

\emr{ 
\begin{lemma}\label{l:decomposition}
For every $h\in G$ there exist $k \in \bbZ$ and $\bar{h}\in G$ such that $h=\bar{h} g_+^k$, where $\bar{h}\in \tC_{\gamma}$ does 
not have $g_+$ as a prefix. Moreover, if $h\in \tC_{\gamma}$, then  
we have $k\ge 0$, and if $h\in \tC_{\gamma_1^{-1}}$ (where $g_+ = \gamma_n \cdots \gamma_1$), 
then we have $k\le 0$.
\end{lemma}
}

\begin{proof}
\emr{
Consider the product $h_m := h g_+^m$. On the one hand, if $m$ is positive and large enough, then $h_m$ starts with the same letter 
as $g_+$ does, hence it doesn't start with $\gamma^{-1}$. On the other hand, if $m$ is negative and very small, then $h_m$ starts 
with the same letter as $g_+^{-1}$ does, hence with $\gamma^{-1}$. Let $m'$ be the smallest integer such that $h_{m'}$ does not 
start with $\gamma^{-1}$. Since $h_{m'} = h_{m'-1} g_+$ and $h_{m'-1}$ starts with $\gamma^{-1} = \gamma_n^{-1}$, we have that 
$h_{m'}$ does not have $g_+$ as a prefix. Letting $\bar{h} := h_{m'}$ and $k := -m'$, the equality $h = \bar{h} g_+^k$ holds, and 
$\bar{h}$ satisfies the desired properties.}


\emr{If $h$ does not start with $\gamma^{-1}$, then the integer $m'$ above is nonpositive, hence $k \geq 0$. 
If $h$ does not start with $\gamma_1$, then $h$ does not have $g_+$ as a prefix. As a consequence, the integer $m'$ is nonnegative, 
and therefore $k \leq 0$.} 
\end{proof}

\begin{proof}[Proof of Lemma~\ref{l:different}]
\emr{Assume that  $g_+$ and $g_{-}$ coincide, and denote $g$ this element. Since $g_- (x_-)=x_-$ and $g_+ (x_+ )=x_+$, 
we have $g(I)=I$. Since the image of $I$ by the first returns have pairwise disjoint interior, $g$ is the only possible first return.}

\emr{Let $h \!\in\! G$ be arbitrary, and let $h=\bar{h}g_+^k$ be its decomposition provided by Lemma~\ref{l:decomposition}. 
Then $\bar{h}$ does not start with $\gamma^{-1}$ and does not have $g_+$ as a prefix. As $g_+$ is the only 
possible first return, this implies that $\bar{h}$ is admissible.}

\emr{Finally, let $J \subset I$ be a fundamental domain for the action of $g_+$ on~$I$. We claim 
that $J$ is \emph{wandering} for the action of~$G$, that is, for any $h\in G\setminus \{\id\}$ we have 
$h(J)\cap J=\emptyset$. Indeed, by looking at the representation $h=\bar{h}g_+^k$ above, we see that 
$h(J) = \bar{h}g_+^k (J)\subset \bar{h}(I)$, and hence $h(J)\cap I=\emptyset$ unless $\bar{h}=\id$. 
Now, if $\bar{h}=\id$, then $h=g_+^k$, and due to the choice of~$J$ as a fundamental 
domain for the action of~$g_+$ on $I$, we have $h(J)\cap J=\emptyset$ unless $k=0$.}
\end{proof}

\emr{As the point $x \!\in\! I$ approaches $x_+$, we have $\bS_{\gamma}(x)\to+\infty$. We next show that, roughly 
speaking, the element $g_+$ is ``the only'' reason for such a divergence. To do this, consider the function 
$$
S_{g_+}(x):=\sup\left\{ \Sf(g,x) \mid g\in \tC_{\gamma} \text{ and $g_+$ is not a prefix of $g$} \right\}.
$$}


\begin{lemma}\label{l:sum-finite}
\emr{The value of $S_{g_+}(x_+)$ is finite. Moreover, $S_{g_+}$ is  a bounded continuous function on some neighborhood of~$x_+$.}
\end{lemma}

\begin{proof}[Proof of Lemma~\ref{l:sum-finite}]
\emr{Any $h = \bg_m\dots\bg_1$ with $\bg_1\neq\gamma^{-1}$ 
that does not have $g_+$ as a prefix either is an admissible word  
or can be decomposed as $h = \widetilde{g} g$, where $g$ is a 
first return different from $g_+$ and $\widetilde{g}$ does not start with $\gamma^{-1}$. 
In the former case, we have seen at the beginning of the proof of Lemma  \ref{l:ends} that 
the sum of derivatives along the geodesic everywhere on~$I$ is bounded from above 
by~$C_4 = e^{C_3} / |I|$. 
In the latter case, the point $y := g(x_+)\in I$ must belong to the interval $[g_-(x_+),g_+(x_-)]$, due to the fact that 
$g \neq g_+$ and because the images of $I$ by first returns are disjoint. 
The sum of derivatives along $\widetilde{g}$ at $y$ is then bounded from above by 
$\max_{[g_-(x_+),g_+(x_-)]} \bS_{\gamma}<+\infty$. By control of distortion, we have 
$g'(x_+) \le e^{C_3}$, which finally yields 
$$
S_{g_+}(x_+) \le C_4 + e^{C_3} \max_{[g_-(x_+),g_+(x_-)]} \bS_{\gamma}<+\infty.$$
The claim concerning boundedness and continuity of $S_{g_+}$ on a neighborhood of $x_+$ 
follows from control of distortion arguments in the same way as we already noticed for the functions~$S_{\gamma}$.}
\end{proof}

\vspace{0.1cm}

Recall that the general idea is to show that the sets $\bM_{\gamma}$ decompose the circle into finitely 
many intervals that form a Markov partition for the dynamics. Thus, at the same point where a connected 
component of one of these set ends, a connected component of another (perhaps different) set should start. 
We have already seen such a behavior in Example~\ref{ex:torus}. The following lemma shows that this is always the case.

\vspace{0.1cm}

\begin{lemma}\label{l:lr-r}
Let $I, x_+, g_+$ be as above, and write again $g_+ = \gamma_n \cdots \gamma_1$. Then $x_+$ is the left endpoint of a 
connected component of some $\widetilde{M}_{\widetilde{\gamma}}$, where $\widetilde{\gamma} = \gamma$ if $g_+$ 
is topologically contracting on a right neighborhood of $x_+$, and $\widetilde{\gamma} = \gamma_1^{-1}$ otherwise. 
\end{lemma}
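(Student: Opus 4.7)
}
The aim is to show that, for sufficiently small $\delta>0$, the interval $J:=(x_+, x_+ + \delta)$ is contained in $\widetilde{M}_{\widetilde{\gamma}}$. Granting this, the claim of the lemma follows: the point $x_+$ cannot belong to $\widetilde{M}_{\widetilde{\gamma}}$ because in case~1 ($\widetilde{\gamma}=\gamma$) it is already a boundary point of the component $I$ of $\widetilde{M}_\gamma$, while in case~2 ($\widetilde{\gamma}=\gamma_1^{-1}\neq \gamma$, since $\gamma_1\neq \gamma^{-1}$) Lemma~\ref{l:images}.1 guarantees that $\widetilde{M}_\gamma$ and $\widetilde{M}_{\gamma_1^{-1}}$ are disjoint, so no component of the latter can extend leftward across $x_+$ into $I\subset \widetilde{M}_\gamma$. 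Therefore $x_+$ must be the left endpoint of the component of $\widetilde{M}_{\widetilde{\gamma}}$ meeting $J$.

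The key observation common to both cases is the existence of a distinguished first-return for the right side of $x_+$. Set $g_{\widetilde{+}}:=g_+$ in case~1 and $g_{\widetilde{+}}:=g_+^{-1}$ in case~2; in both cases $g_{\widetilde{+}}$ fixes $x_+$ and topologically contracts a right neighborhood of $x_+$ into itself, so for $\delta$ small we have $g_{\widetilde{+}}(J)\subset J$. Using the expression $g_+=\gamma_n\cdots\gamma_1$ with $\gamma_n=\gamma$ (by Lemma~\ref{l:disj}) and $\gamma_1\neq \gamma^{-1}$ (admissibility), one checks in each case that the reduced word of $g_{\widetilde{+}}$ ends with $\widetilde{\gamma}$ and starts with a letter different from $\widetilde{\gamma}^{-1}$. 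Indeed, in case~2 one has $g_+^{-1}=\gamma_1^{-1}\gamma_2^{-1}\cdots\gamma_n^{-1}$, whose last letter is $\gamma_1^{-1}=\widetilde{\gamma}$ and whose first-applied letter is $\gamma_n^{-1}=\gamma^{-1}$, distinct from $\widetilde{\gamma}^{-1}=\gamma_1$ precisely because $\gamma_1\neq\gamma^{-1}$. Hence $g_{\widetilde{+}}$ qualifies as a $(\widetilde{\gamma},J)$-first-return in the obvious analog of Definition preceding Lemma~\ref{l:disj}.

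With this setup, the strategy is to mimic the framework of Lemmas~\ref{l:disj} and~\ref{l:ends} with $(\widetilde{\gamma},J)$ in place of $(\gamma,I)$, and to run the inductive-bound argument \emph{forwards} to conclude that $\bS_{\widetilde{\gamma}}$ is bounded on $J$. Specifically, one defines $(\widetilde{\gamma},J)$-admissible words and $(\widetilde{\gamma},J)$-first-returns; disjointness of images of $J$ under distinct admissible words follows by the same proof as Lemma~\ref{l:disj}, using Lemma~\ref{l:images}.\ref{i:img}) together with the fact that the images $g(I)$ and $g(J)$ under any $g\in G$ are adjacent intervals sharing the endpoint $g(x_+)$, so the side information on which $\widetilde{M}_{\widetilde{\gamma}'}$ an image lies in transfers from the $I$-side (where it is known) to the $J$-side. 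This gives uniform distortion control for admissible words on $J$ via Proposition~\ref{p:sum}, and only finitely many $(\widetilde{\gamma},J)$-first-returns can have $|g(J)|\ge \tfrac12 e^{-C}|J|$.

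The main obstacle is the inductive bounding step: unlike in Lemma~\ref{l:ends}, the argument is not by contradiction, so one must produce a genuine finite upper bound for $\bS_{\widetilde{\gamma}}$ on $J$. This is handled by separating the finitely many ``large'' first-returns into $g_{\widetilde{+}}$ itself plus the others. The others do not fix $x_+$, so their images avoid a neighborhood of $x_+$ and lie in a region where $\bS_{\widetilde{\gamma}}$ is already known to be bounded --- either a priori, or by propagation from the $I$-side across the shared endpoints of adjacent $G$-images of $I$ and $J$. For $g_{\widetilde{+}}$, one uses that it strictly contracts $J$ towards $x_+$, so the inductive bound, together with the already-controlled sum along the admissible prefix, gives a recursive inequality of the form $\bS_{\widetilde{\gamma}}|_J \le A + g_{\widetilde{+}}'\cdot \bS_{\widetilde{\gamma}}|_{g_{\widetilde{+}}(J)}$ with $g_{\widetilde{+}}'\le 1$ on $J$; iterating this and controlling the parabolic decay of $g_{\widetilde{+}}^k$ near $x_+$ yields the desired uniform bound on $J$, completing the proof.
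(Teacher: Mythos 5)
Your overall target (show a right interval $J = (x_+,x_++\delta)$ is contained in $\widetilde{M}_{\widetilde\gamma}$) and your identification of the contracting map $g_{\widetilde +}\in\{g_+,g_+^{-1}\}$ with the correct last letter $\widetilde\gamma$ are both on the right track, and the word-structure computation for $g_+^{-1}$ is correct. But the route you take has genuine gaps, and it is also substantially different from (and more complicated than) the paper's argument.

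The central problem is the attempt to rebuild the whole $(\widetilde\gamma,J)$-first-return machinery of Lemmas~\ref{l:disj} and~\ref{l:ends}. That machinery is anchored in the hypothesis that the base interval is a connected component of some $\widetilde M_{\gamma'}$; this is precisely what lets one place each admissible image inside the appropriate $\widetilde M_{\gamma_k}$ via Lemma~\ref{l:images}.\ref{i:img}) and derive disjointness, and what forces a first-return to land back inside the base interval. For your $J$ none of this is available, since $J\subset\widetilde M_{\widetilde\gamma}$ is exactly what you are trying to prove. Your ``propagation from the $I$-side'' is meant to supply the missing information, but it is not made precise and does not obviously do so: disjointness of the $g(I)$'s does not by itself yield disjointness of the adjacent right intervals $g(J)$, and which $\widetilde M$-set (if any) a given $g(J)$ sits in is not determined by which one $g(I)$ sits in. A further issue is the concluding recursion $\bS_{\widetilde\gamma}|_J\le A+g_{\widetilde +}'\cdot\bS_{\widetilde\gamma}|_{g_{\widetilde +}(J)}$: the claim $g_{\widetilde +}'\le 1$ on $J$ is not justified (near a parabolic fixed point the derivative oscillates around $1$), and even granting it the inequality is vacuous without a quantitative decay input.

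The paper instead avoids any first-return theory for $J$ and runs a two-stage argument working only with the already-established $(\gamma,I)$-machinery. Stage one bounds the sum of intermediate derivatives at the single point $x_+$ along every geodesic that does not begin with $g_+$ or $g_+^{-1}$: such a geodesic either does not begin with $\gamma^{-1}$, in which case it is admissible (so bounded by $\frac{1}{|I|}e^{C_3}$) or factors through a first-return $g_j\neq g_+^{\pm 1}$, whose image of $x_+$ lies in the compact interior region $[g_-(x_+),g_+(x_-)]$ where $\widetilde S_{\gamma}$ is a priori finite and uniformly bounded; or else the geodesic begins with $\gamma^{-1}$ but not $g_+^{-1}$, and then one pre-composes with $g_+$ (which fixes $x_+$, so the sum is unchanged) to fall back into the previous case. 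Stage two extends this to a right neighborhood of $x_+$: by Proposition~\ref{bound} the stage-one bound propagates to a full neighborhood; then any geodesic not beginning with $\widetilde\gamma^{-1}$ at a point of that neighborhood decomposes as a power $g_+^k$ (with $g_+$ contracting, so controlled by a Denjoy-type sum over disjoint fundamental domains) followed by a word not starting with $g_+^{\pm 1}$, to which stage one applies. That decomposition---which in particular uses $\gamma_n=\gamma$ so that ``does not start with $\widetilde\gamma^{-1}$'' forces the prefix to be a power of $g_+$---is the key observation your proposal lacks. I would suggest replacing the $J$-first-return construction by this pointwise-then-propagate scheme.
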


\begin{proof}

\emr{Lemma~\ref{l:sum-finite} implies that there exists a neighborhood $U$ of the point $x_+$ for which 
\begin{equation}\label{eq:C-U}
C_U:=\sup_{y\in U} S_{g_+}(y)<+\infty.
\end{equation}
Let $U_+\subset U$ be a right neighborhood of $x_+$ on which $g_+$ has no fixed points. We next show 
that $U_+\subset \widetilde{M}_{\gamma}$ if $g_+$ is topologically contracting towards $x_+$ in $U_+$, 
and that $U_+\subset \widetilde{M}_{\gamma_1^{-1}}$ otherwise. 
}

\emr{Let us consider the first case. Given a point $x\in U_+$, let $J$ be a fundamental domain for the 
action of~$g_+$ on~$U_+$. For an arbitrary $h\in G$ that does not start with $\gamma^{-1}$, let 
$h = \bar{h}g_+^k$ be its decomposition provided by Lemma~\ref{l:decomposition}. Since $k \geq 0$ 
and $g_+$ is topologically contracting towards $x_+$ on $U_+$, the point $y := g_+^k (x)$ belongs 
to $U_+$, hence the sum of the intermediate derivatives associated to $\bar{h}$ at $y$ is  
bounded from above by $C_U$. Moreover, as $J$ is a fundamental domain, all the images $g_+^i(J)$, 
$i=0,1,\dots,k$, are pairwise disjoint, hence the sum of intermediate derivatives of $g_+^k$ at $x$ 
is bounded from above by some constant~$C_J$. Therefore, the sum of intermediate derivatives associated 
to $h$ at $x$ does not exceed $C_J(1+C_U)$. This yields $\bS_{\gamma}(x)\le C_J(1+C_U)<+\infty$, thus 
$x\in \widetilde{M}_{\gamma}$. Finally, as $x\in U_+$ was arbitrary, we have $U_+\subset \widetilde{M}_{\gamma}$.
}

\emr{In the second case where $g_+$ is topologically repelling on a right neighborhood of~$x_+$, a similar argument 
applies. Indeed, let again $J \subset U_+$ be a fundamental domain for the action of $g_+$ that contains a given point  
$x\in U_+$. For $h\in G$ that does not start with $\gamma_1^{-1}$, let $h=\bar{h} g_+^k$ be its decomposition provided by 
Lemma \ref{l:decomposition}. In this case, we have $k \le 0$. As $g_+$ is topologically repelling on $U_+$ and $J$ is a 
fundamental domain, the intervals $J, g_+^{-1}(J), g_+^{-2}(J),\dots,g_+^k(J)$ are pairwise disjoint. Hence, the sum of 
the intermediate derivatives at $x$ of the iterations of $g_+^{-1}$ is smaller than or equal to the same constant 
$C_J$ above. As a consequence, we have
$$
\bS_{\gamma_1^{-1}}(x)\le C_J (1+C_U)<+\infty,
$$
and since $x\in U_+$ was arbitrary, this implies that $U_+\subset \widetilde{M}_{\gamma_1^{-1}}$.
}
%
\end{proof}

\vspace{0.1cm}

\begin{lemma}\label{l:der=1} 
Neither $g_{-}$ nor $g_+$ have fixed points in the interior of $I$. Moreover, $g_{-}'(x_{-})  = g_{+}'(x_{+}) = 1$.
\end{lemma}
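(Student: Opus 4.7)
\emph{Plan.} I would prove the lemma by contradiction for both parts, exploiting the finiteness of $\widetilde{S}_\gamma$ on components of $\widetilde{M}_\gamma$.

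The key principle behind the argument is the following: if $g = \gamma_n \cdots \gamma_1 \in G$ (in reduced form) fixes a point $y \in \widetilde{M}_\gamma$ and its first-applied letter satisfies $\gamma_1 \neq \gamma^{-1}$, then the periodic infinite reduced word $g^\infty$ starts with $\gamma_1$, its prefixes include all iterates $g^k$, and at the fixed point $(g^k)'(y) = g'(y)^k$. Hence
\[
\sum_{k = 0}^{\infty} g'(y)^k \;=\; \sum_{k = 0}^{\infty} (g^k)'(y) \;\leq\; S_{\gamma_1}(y) \;\leq\; \widetilde{S}_\gamma(y) \;<\; \infty,
\]
which forces $g'(y) < 1$ strictly. (Reducedness of $g^\infty$ uses $\gamma_n = \gamma$ for first-returns and $\gamma_1 \neq \gamma^{-1}$ for admissibility.)

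For the no-interior-fixed-points claim: assume $y_0 \in (x_-, x_+)$ is fixed by $g_+$. Since $g_+$ is admissible and $y_0 \in \widetilde{M}_\gamma$, the principle yields $g_+'(y_0) < 1$, so $y_0$ is a hyperbolic attracting fixed point of $g_+$ with a basin of attraction of positive length inside $I$. Pick a subinterval $I_* \subset I$ in this basin. For each large $n$, $g_+^n$ contracts $I_*$ into a small neighborhood of $y_0$. Selecting (by density of orbits) some first-return $g_j$ in the finite list $\{g_1, \ldots, g_m\}$ with $g_j(y_0)$ lying outside the basin, the composition $g_j \cdot g_+^n$ remains admissible, since its first-applied letter is $\gamma_1 \neq \gamma^{-1}$. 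By Lemma \ref{l:disj}, the images $(g_j \cdot g_+^n)(I_*)$ for varying $n$ are pairwise disjoint; by bounded distortion (Proposition \ref{p:sum}) each has length bounded below by a positive constant times $g_j'(y_0) \cdot |g_+^n(I_*)|$. Combining this across the full family, and exploiting that iterates of $g_+$ on a fundamental domain inside $I_*$ tile a positive-length subset of $I$, the total length exceeds $1$, contradicting Lemma \ref{l:disj}.

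For the parabolicity $g_+'(x_+) = 1$: by Lemma \ref{l:lr-r}, $x_+$ is the left endpoint of a component $J$ of $\widetilde{M}_{\widetilde{\gamma}}$, with $\widetilde{\gamma} \in \{\gamma, \gamma_1^{-1}\}$ depending on the topological behavior of $g_+$ near $x_+$. When $\widetilde{\gamma} = \gamma$ (i.e.\ $g_+$ contracts on a right neighborhood of $x_+$), the iterates $g_+^n$ satisfy the admissibility constraint on $J$, and applying the central principle to points $y \in J$ approaching $x_+$ together with bounded distortion yields $\sum (g_+^n)'(x_+) < \infty$, forcing $g_+'(x_+) \leq 1$; when $\widetilde{\gamma} = \gamma_1^{-1}$, the analogous argument with $g_+^{-1}$ (whose first-applied letter $\gamma^{-1}$ differs from $\gamma_1$) gives $g_+'(x_+) \geq 1$. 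The reverse inequality in each case is obtained by reproducing on $J$, with the first-return element $g'_-$ of $J$ fixing $x_+$ provided by Lemmas \ref{l:ends} and \ref{l:lr-r}, the construction of the previous paragraph: any strict hyperbolicity at $x_+$ would yield infinitely many disjoint admissible images of a small interval, with cumulative length exceeding $1$, contradicting Lemma \ref{l:disj}.

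The main obstacle is the rigorous construction of infinitely many disjoint admissible images with total length exceeding $1$: one must verify that the composed elements $g_j \cdot g_+^n$ remain reduced and admissible, that their images are disjoint (which follows from distinctness in the free group $G$ and Lemma \ref{l:disj}), and that the decay rate of $|g_+^n(I_*)|$ does not prevent the sum from diverging, the latter being handled by restricting to fundamental domains of $g_+$ in $I$ close to $y_0$ (respectively $x_+$). The statements for $g_-$ and $x_-$ follow by the symmetric argument.
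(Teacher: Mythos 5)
Your ``central principle'' (that if $g$ fixes $y\in\widetilde M_\gamma$ and the infinite word $g^\infty$ is reduced and starts in an allowed direction, then $\sum_k g'(y)^k\le\widetilde S_\gamma(y)<\infty$ and hence $g'(y)<1$) is correct, and it is essentially what the paper uses for the inequality $g_+'(x_+)\ge 1$. But the way you deploy it in both halves of the lemma has genuine gaps.

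For the no-interior-fixed-point claim, the step ``the composition $g_j\cdot g_+^n$ remains admissible, since its first-applied letter is $\gamma_1\neq\gamma^{-1}$'' confuses two different conditions. Admissibility requires that \emph{every} intermediate image of $I$ be disjoint from $I$, not merely that the first letter avoid $\gamma^{-1}$; since $g_+^k(I)\subset I$ for all $k\ge 1$, the word $g_j\,g_+^n$ is never admissible for $n\ge 1$, so Lemma~\ref{l:disj} does not apply to it. Independently, the promised divergence of total length does not materialize: because $g_+'(y_0)<1$, the lengths $|g_+^n(I_*)|$ decay geometrically and the series $\sum_n|g_jg_+^n(I_*)|$ converges, so there is no contradiction with being inside the circle. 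The paper's route is different and simpler: given an interior fixed point $y$ of $g_+$, every point $x\in(y,x_+)$ has its whole $g_+^{\pm}$-orbit confined to the compact subinterval $[y,x_+]\subset I$, which gives uniform distortion control for the $g_+^{\pm}$-prefix of any geodesic; combining this with the bounds for admissible/first-return suffixes already established, one finds that \emph{all} sums of intermediate derivatives at $x$ are bounded, directly contradicting Proposition~\ref{p:geodesic-free}.

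For $g_+'(x_+)=1$, the argument ``applying the central principle to points $y\in J$ approaching $x_+$ together with bounded distortion yields $\sum(g_+^n)'(x_+)<\infty$'' cannot work, for two reasons. First, a finite value of $\sum_n(g_+^n)'(x_+)=\sum_n g_+'(x_+)^n$ would force $g_+'(x_+)<1$, which is precisely the conclusion the paper rules out; so the inequality you are trying to establish is on the wrong side. Second, the limiting step is not justified: in the parabolic case the sums $\sum_n(g_+^n)'(y)$ are finite for each fixed $y$ in a right neighborhood of $x_+$ but are \emph{not} uniformly bounded as $y\to x_+^+$ (nor is the distortion of $g_+^n$ on $[x_+,y]$ controlled, since the images $[x_+,g_+^n(y)]$ are nested rather than disjoint). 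Hence one cannot pass to the boundary point. The paper argues in the opposite direction: assuming $g_+'(x_+)=\kappa<1$, a geometric-series bound shows that sums of intermediate derivatives at $x_+$ along geodesics of the form (power of $g_+$) $\cdot$ (admissible word) are bounded by $\frac{1}{1-\kappa}$ times a constant, so $x_+\in\widetilde M_\gamma$, a contradiction; this yields $g_+'(x_+)\ge1$, and the converse inequality $\le1$ then follows at once from the absence of interior fixed points together with $g_+(I)\subsetneq I$. You would need to reorganize the proof along these lines, as the limiting and length arguments as stated do not go through.
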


\begin{proof} 
\emr{By Lemma~\ref{l:decomposition}, each $h\in \tC_{\gamma}$ 
can be represented as $\bar{h} g_+^k$, where $k \ge 0$ and $\bar{h}\in\tC_{\gamma}$ does not have $g_+$ 
as a prefix. Hence, the sum of intermediate derivatives along $h$ at $x_+$ equals 
\begin{equation}\label{eq:long-chain}
\Sf(h,x_+)= g_+'(x_+)^k \cdot \Sf(\bar{h},x_+)  + \sum_{i=0}^{k-1} g_+'(x_+)^i \cdot  \Sf(g_+,x_+).
\end{equation}
}


Assuming that $\lambda := g_+'(x_+) < 1$, we will prove that $x_+$ belongs to $\widetilde{M}_{\gamma}$, 
thus yielding a contradiction. To do this, notice that the second summand in the right hand side expression 
of~\eqref{eq:long-chain} does not exceed~$\frac{\Sf(g_+,x_+)}{1-\lambda}$. Moreover, the first summand does 
not exceed $S_{g_+}(x_+)$, which is finite due to Lemma~\ref{l:sum-finite}. We thus get the uniform estimate
$$
\Sf(h,x_+)\le S_{g_+}(x_+)+\frac{\Sf(g_+,x_+)}{1-\lambda}.
$$
As $h\in \tC_{\gamma}$ was arbitrary, this implies that $x_+\in\widetilde{M}_{\gamma}$, 
which is the desired contradiction.

We have hence established that that $g_+' (x_+)  \geq 1$.
Now, if we prove that $g_+$ has no fixed point inside $I$, then the fact that $g_+(I)$ is strictly contained in~$I$ 
will imply that $g_{+}'(x_{+})$ cannot be greater than~$1$, and hence equals~$1$. 

Assume for a contradiction that $g_+(y) = y \in I$. \emr{Take an arbitrary point $x \in [y,x_+]$ that is not fixed by $g_+$. 
We next prove that the sum of derivatives $\Sf(h,x)$ is uniformly bounded on $h\in G$, which is in contradiction with 
Proposition~\ref{p:geodesic-free}.} To do this, notice that since 
\emr{$x\in I$, we have $\bS_{\gamma}(x)<\infty$, hence we only need to consider the case of $h$ starting with 
$\gamma^{-1}$. For such an $h$, Lemma~\ref{l:decomposition} allows us to write it in the form $h=\bar{h}g_+^{-k}$, 
with $k\ge 0$ and $\bar{h}\in \tC_{\gamma}$ not having $g_+$ as a prefix. 
As before, we have
\begin{equation}\label{eq:repelling}
\Sf(h,x)=(g_+^{-k})'(x) \cdot \Sf(\bar{h},g_+^{-k}(x))+ \sum_{i=0}^{k-1} (g_+^{-1})'(x)^i \cdot \Sf(g_+^{-1},g_+^{-i}(x)).
\end{equation}
}

\emr{Let $J$ be a fundamental domain for the action of $g_+$ that contains $x$. The images of~$J$ under 
the iterates of $g_+^{-1}$ are pairwise disjoint, hence we have a uniform control of distortion for them. 
Therefore, the second summand in the right side expression of~\eqref{eq:repelling} does not exceed 
some constant independent of~$k$, and the same holds for the factor~$(g_+^{-k})'(x)$ in the first summand. 
Finally, the image point $g_+^{-k}(x)$ belongs to $[y,x_+]$, which implies that the value of $\Sf(\bar{h},g_+^{-k}(x))$ 
is uniformly bounded from above. Indeed, away from $x_+$, boundedness of  
$\Sf(\bar{h}, \cdot)$ follows from the finiteness of the $\bS_{\gamma}$,  
and in a neighborhood of~$x_+$, it follows from Lemma~\ref{l:sum-finite}. As a consequence, we obtain a 
uniform upper bound for the right hand side expression of~\eqref{eq:repelling}, which is the desired contradiction.
}


Obviously, similar arguments apply to conclude that $g_{-}$ has no fixed point in $I$, 
and~$g_{-} '(x_{-}) = 1$.
\end{proof}

\vspace{0.1cm}

\begin{lemma}\label{l:finite}
There is only a finite number of connected components of sets $\bM_{\gamma}$.
\end{lemma}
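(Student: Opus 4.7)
The plan is to argue by contradiction. Suppose that some $\bM_\gamma$ has infinitely many connected components. Since these are pairwise disjoint open subintervals of $\Sc$, their lengths tend to zero, and one can extract a subsequence of components $I_n = (x_-^n, x_+^n)$ converging (as sets) to a single point $z \in \Sc$. By Lemmas~\ref{l:ends} and~\ref{l:der=1}, each $I_n$ comes with two distinct first-returns $g_\pm^n \in G$ fixing the endpoints $x_\pm^n$ parabolically, i.e., $(g_\pm^n)'(x_\pm^n)=1$. The elements $g_+^n$ are pairwise distinct for different~$n$, since each $g_+^n$ has the fixed point $x_+^n$ and a non-identity real-analytic circle diffeomorphism has only finitely many fixed points.

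A first observation is that the distortion constant $C_3$ from the proof of Lemma~\ref{l:ends} is uniform across all components: indeed, since admissible images are pairwise disjoint in $\Sc$ by Lemma~\ref{l:disj}, the sum of their lengths is bounded by the circle length, so $C_3$ depends only on $\mathcal{G}$. Consequently, on $I_n$ we have $(g_\pm^n)' \in [e^{-C_3},e^{C_3}]$, and combined with $(g_\pm^n)'(x_\pm^n)=1$ and $g_\pm^n(I_n)\subset I_n$, the maps $g_\pm^n$ behave like near-identity maps on the vanishing interval~$I_n$, though only on~$I_n$.

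The strategy is then to adapt the scheme at the end of~\S~\ref{s:proof}, via a rescaling step. Using (the extension of) Sacksteder's theorem~\cite{DKN}, pick $f \in G$ with a hyperbolic repelling fixed point~$y_0$, which by minimality we may place arbitrarily close to~$z$. Conjugating $g_\pm^n$ by an iterate $f^{k_n}$ with $k_n$ chosen so that $f^{k_n}(I_n)$ has length of order~$1$ yields, via Proposition~\ref{bound-complex}, a sequence of conjugates $\widetilde{g}_\pm^n \in G$ that, along a subsequence, $C^1$-converge on a fixed complex disc to nontrivial near-identity elements fixing two distinct nearby points. An argument analogous to Lemma~\ref{l:free-gen} ensures that for appropriately chosen indices, two of these rescaled elements generate a free subgroup. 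Proposition~\ref{p:to-id} then produces a sequence of iterated commutators converging to the identity on the disc, and Proposition~\ref{p:affine} yields a local translation subgroup in the $C^1$-closure of~$G$. The contradiction follows exactly as in the conclusion of Proposition~\ref{p:geodesic-free}: combined with a Sacksteder-type hyperbolic fixed point inside the region carrying the translation flow, this produces elements of~$G$ whose image of a small interval would have to exceed the whole circle.

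The main obstacle, as in Case~1 of~\S~\ref{s:proof}, is this rescaling step: the parabolic maps $g_\pm^n$ are close to the identity only on the shrinking interval~$I_n$, and matching the iterate $k_n$ of the hyperbolic element to the scale~$|I_n|$ so that the resulting conjugates remain well-defined and $C^1$-controlled on a fixed complex disc requires careful application of Propositions~\ref{p:sum} and~\ref{bound-complex}, along the lines of the sketch in Remark~\ref{r:C1}. An alternative route worth exploring is to exploit directly the pair $(g_-^n,g_+^n)$ of noncommuting parabolics fixing the two endpoints of the same~$I_n$ (via a Nakai-type argument~\cite{Nakai}) to produce the local flow without invoking Sacksteder explicitly at the rescaling stage.
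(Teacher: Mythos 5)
Your proof takes a genuinely different route from the paper, and it has gaps that I do not see how to close.

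The paper's proof is direct and combinatorial: it builds two maps $\Phi_R,\Phi_L$ on the set of connected components, sending $I$ to the component of $\widetilde{M}_{\gamma_1}$ (resp.\ $\widetilde{M}_{\overline{\gamma}_1}$) containing $\gamma_1(I)$ (resp.\ $\overline{\gamma}_1(I)$), where $\gamma_1,\overline{\gamma}_1$ are the first letters of $g_+,g_-$. Each component lies on a finite $\Phi_R$-cycle (of length $=|g_+|$) and a finite $\Phi_L$-cycle, the images of $I$ along each cycle are pairwise disjoint, and Lemma~\ref{l:der=1} forces the distortion of whichever of $g_\pm$ contracts $I$ by at least $1/2$ to be at least $\log 2$; hence every $I$ belongs to a right- or left-cycle whose intermediate images have total length at least $\frac{\log 2}{C_{\mathcal{G}}}$. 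Since distinct cycles are disjoint in $\Sc$, there are finitely many such cycles, hence finitely many components. No rescaling, no vector fields, no contradiction argument.

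Your approach (rescaling near an accumulation point $z$, then Ghys commutators and Proposition~\ref{p:affine} to get a local translation flow, then a contradiction) has two serious problems. First, the rescaling step: you propose to conjugate $g_\pm^n$ by $f^{k_n}$ where $f$ has a hyperbolic repelling fixed point $y_0$ ``placed arbitrarily close to $z$''. But once $y_0$ is fixed, the intervals $I_n$ converge to $z\neq y_0$, so their distance to $y_0$ is bounded away from $0$; the iterates $f^{k_n}$ then expand the $I_n$ for only a bounded number of steps before escaping the linearization domain, and there is no reason $f^{k_n}(I_n)$ ever reaches definite size. You cannot choose $y_0=z$ (hyperbolic fixed points are a fixed countable set), and choosing $y_0$ depending on $n$ destroys the uniformity of the linearization constants. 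Second, and more fundamentally, the final contradiction is not available. The contradiction at the end of \S\ref{s:proof}, and equally the one in Proposition~\ref{p:geodesic-free}, hinges on the existence of a \emph{non-expandable} point $x_0$ in the region where the flow lives: one maps $x_0$ close to $y_0$ with bounded-below derivative and then expands with $f^k$, contradicting $g'(x_0)\le 1$ for all $g$. In Lemma~\ref{l:finite} the accumulation point $z$ of the components is not known to be non-expandable (nothing in the hypotheses of \S\ref{s:cones} provides such a point near $z$), so producing a local translation flow in the closure of $G$ is not, by itself, a contradiction here. There is also the minor but real issue that the rescaled maps $\widetilde{g}_\pm^n$, while having derivative $1$ at the rescaled fixed point and bounded distortion, need not be $C^0$-close to the identity on a fixed disc, so Proposition~\ref{p:to-id} does not apply to them directly; you would need to first iterate the Nakai/affine-flow mechanism, as in your suggested alternative, but that is again a sketch rather than an argument. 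I would recommend abandoning the contradiction scheme and following the paper's direct cycle-counting argument, which uses only the disjointness from Lemma~\ref{l:disj} and the parabolicity from Lemma~\ref{l:der=1}.
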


\begin{proof}
For each connected component $I$ of some $\bM_{\gamma}$, 
write $g_+ = \gamma_n \cdots \gamma_1$, and 
let $\JR (I)$ be the connected component of $\bM_{\gamma_1}$ 
that contains $\gamma_1 (I)$. Note that $\JR(I)$ and $\gamma_1(I)$ 
have the same right endpoint $\gamma_1(x_+)$. Indeed, since 
$\gamma_n\dots \gamma_2(\JR(I))\subset \bM_{\gamma}$, in the 
case where $\JR(I)$ contained $\gamma_1(x_+)$, we would have that 
$g_+(x_+)=x_+$ belongs to $\bM_{\gamma}$, which is absurd. 

We next show that the $(\gamma_1,\Phi_R (I))$-first-return $\widetilde{g}_+$ that fixes 
the right endpoint of $\JR(I)$ is exactly the conjugate of $g_+$ by $\gamma_1$, that is, 
$$
\widetilde{g}_+=\gamma_1 \gamma_n\dots\gamma_2.
$$
Indeed, this conjugate doesn't start with $\gamma_1^{-1}$ and fixes the right endpoint of $\JR(I)$. 
Hence, it is either $\widetilde{g}_+$, or a higher power of it. But the latter case would imply that 
while applying $\gamma_1 \gamma_n\dots\gamma_2$ to $\gamma_1(x_+)$, we pass through 
$\gamma_1 (x_+)$ at some intermediate step. Removing the last applied letter (which should 
be $\gamma_1$), this would imply that $g_+$ wasn't admissible, which is a contradiction.

By the previous discussion, the map $I \mapsto \JR (I)$ yields a dynamics for which 
every $I$ is contained in a cycle. Moreover, the composition of the maps associated 
to the cycle gives the corresponding element $g_+$. Clearly, any two such cycles 
either coincide or are disjoint.  

Similarly, writing $g_{-} = \overline{\gamma}_{m} \cdots \overline{\gamma}_1$, we may consider the 
connected component $\JL(I)$ of $\bM_{\overline{\gamma}_1}$ that contains $\overline{\gamma}_1 (I)$. This yields a 
new dynamics for which every $I$ belongs to a cycle, and any two such cycles either coincide or are disjoint.  

Now, for every connected component $I$ as above, the intervals $g_+(I)$ and $g_-(I)$ are disjoint and contained in $I$, 
hence the length of at least one of them doesn't exceed $|I|/2$. Since by the preceding lemma the derivative at the 
corresponding endpoint equals~1, the distortion of the corresponding first-return is bounded from below by~$\log 2$. This implies that the sum of the lengths of the image intervals along the composition 
is bounded from below by $\frac{\log 2}{C_{\mathcal{G}}}$. 

All the right cycles are disjoint, and so are all the left cycles. Thus, there is but a finite number of possible right cycles 
with such a sum of lengths, and there is but a finite number of possible left cycles. Therefore, there is but a finite 
number of connected components of the sets $\bM_{\gamma}$.
\end{proof}

\emr{We are now ready to conclude the first part of the proof of 
Proposition~\ref{p:suma-divergente} via the construction of a Markov partition.
Namely, }Lemmas~\ref{l:lr-r} and~\ref{l:finite} imply that 
the set of endpoints 
$N=\bigcup_{\gamma\in\mathcal{G}} \partial \bM_{\gamma}$ is finite, 
and its complement consists of the disjoint union of 
the connected components of the $\bM_{\gamma}$'s. Let 
$\mJ=\{I_1,\dots,I_m\}$ be the family of these connected components. This will be 
a (preliminary) partition that we will use.

Next, to this partition we would like to associate a ``topologically expanding'' map 
that sends each interval of the partition to a finite union of intervals of the partition. 
To do this, we define the map $R:\Sc\setminus N \to \Sc$ by
$$
R|_{\bM_{\gamma}} = \gamma^{-1}|_{\bM_{\gamma}}, \quad \forall \gamma \in\mathcal{G}.
$$
\emr{This choice is quite natural: at each point in the interior of each $\bM_{\gamma}$, geodesics 
starting with~$\gamma^{-1}$ are the only ones along which, eventually, one  may get an expansion.}

Note that for all $i$, the image $R(I_i)$ is  a union of some of the $I_j$'s plus the boundary points 
of adjacent intervals. Indeed, Lemma~\ref{l:images} implies that $R(I_i)$ is a connected 
component of the corresponding set~$M_{\gamma}$ (in particular, its endpoints belong to~$N$).

\emr{To conclude the proof of Proposition~\ref{p:suma-divergente}, we need to study the 
partition $\mathcal{J}$ and the map $R$ in more details. We start by defining}~$N_j$ as the 
set of indeterminacy points for the map $R^j$. \emr{More precisely, we} let $N_0:=N$ and 
$N_{j+1}:=N_j \bigcup R^{-j}(N)$. 


\emr{
\begin{lemma}\label{l:NE-G} 
The following holds:
$NE(G)\subset \bigcup_j N_j \subset G(N).$
\end{lemma}}
\emr{Before going into the proof of this lemma, 
we notice that it allows to conclude the proof of Proposition~\ref{p:suma-divergente} 
(and hence those of Proposition~\ref{p:geod} and Theorem~\ref{t:exp}). Indeed, by Lemma~\ref{l:ends}, every point 
of~$N$ is fixed by the corresponding element $g_+\neq\id$. Hence, every point of~$G(N)$ also has a nontrivial 
stabilizer. By Lemma~\ref{l:NE-G}, this yields the desired property~($\star$) for the group~$G$.}

\emr{In order to prove Lemma~\ref{l:NE-G}, we would like to use the topological expansion of the map~$R$.
Notice that}, due to the Markov property of the partition 
$\mJ$, the map $R^{j}$ sends each connected component of $\Sc\setminus N_j$ onto 
one of the intervals~$I_i$. \emr{Hence, to obtain a lower bound for the derivative of $R^j$ 
in the complement of~$N_j$, it would suffice to obtain an upper bound for its distortion and 
show that the size of the connected components of this complementary set tends to zero. 
The former issue is treated (with a slightly modified statement) in Lemma~\ref{l:part+dist} 
below, and the latter is ruled out by Lemma~\ref{l:diameters}; both are then put together 
in Corollary~\ref{c:needed}, concluding the proof of Lemma~\ref{l:NE-G}.}

\emr{However, the distortion of the iterations of $R^j$ on connected components of the 
complement of~$N_j$ cannot be controlled due to the presence of parabolic points. Indeed, the iterations of~$g_+$ 
have derivative identically equal to~$1$ at the corresponding endpoint $x_+\in N$. We are hence forced 
to modify the above procedure to reobtain control of distortion. The method, allowing to do so}  
(which is similar to the one used in~\cite{DKN-MMJ}) \emr{is to vary the number of iterations by  
considering the first iteration of $R$ under which the point leaves the neighborhoods of the points of~$N$.}

Namely, for each interval $I=(\emr{x_-,x_+}) \in \mJ$, consider the corresponding 
first-returns~$g_+$ and $g_-$ fixing the right and left endpoints of $I$, respectively. Let 
\emr{
$$
J_{(I)}:= (g_-^2(x_+),g_+^2(x_-)), \quad \tN_{(I)}:= 
\left( \bigcup_{i=2}^{\infty} \{g_-^i(x_+),g_+^i(x_-)\} \right) \bigcup \{x_-,x_+\},
$$
and denote by $\tR_{(I)}: I \setminus \tN_{(I)}\to J_{(I)}$  the ``$R$-first-exit to $J_{(I)}$'' (see Figure~\ref{f:tR}):
$$
\tR_{(I)}(y) = R^{\min\{m \mid R^m (y) \in J_{(I)}\}}(y), \quad \forall y \in I \setminus \tN_{(I)}.
$$}

\begin{figure}[!h]
\begin{center}
\includegraphics{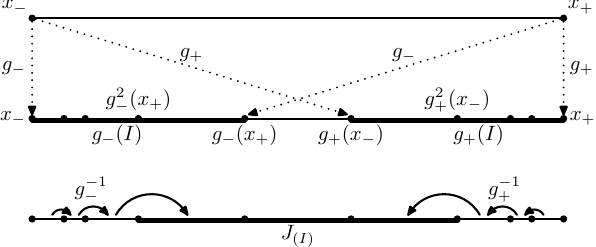}
\end{center}
\caption{The definition of the map $\tR_{(I)}(y)$.}\label{f:tR}
\end{figure}

\emr{
Notice that }
$$
\emr{\tR_{(I)}(y)=\left\{\begin{array}{lll}
g_-^{-i}(y), &\quad & y\in (g_-^{i+2}(x_+),g_-^{i+1}(x_+)),\\
g_+^{-i}(y), &\quad & y\in (g_+^{i+1}(x_{-}),g_+^{i+2}(x_{-})),\\
y, &\quad & y\in J_{(I)},
\end{array}\right.}
$$
\emr{and that $\tR_{(I)}$ sends each connected component of its domain of definition to one of the tree intervals of the set
$$
\mQ_{(I)}:=\{(g_-^2(x_+), g_-(x_+)), (g_-(x_+), g_+(x_-)), (g_+(x_-), g_+^2(x_-)) \}.
$$
}

A crucial fact is that $\widetilde{R}_{(I)}$ locally behaves as an iterate of $R$, with a 
number of iterations that is constant on each interval of the partition $\mJ_j$. 

Next, let us apply the first-exit map after $j$ iterations of $R$. More precisely, 
let $\tR_j:\Sc\setminus \widetilde{N}_j \to \Sc$ be defined as 
$$\tR_j |_{J} = \tR_{(R^{j}(J))} \circ R^j, 
\quad \forall J \in \mJ_j,
$$
where 
$$
\widetilde{N}_j :=  N_j \cup \bigcup_{I \in \mJ} R^{-j}(\tN_{(I)}). 
$$
We let $\widetilde{\mJ}_j$ be the family of connected components of the 
complement of $\tilde{N}_j$.

\begin{lemma}\label{l:part+dist}
For each $j$ and each $J \in \widetilde{\mJ}_j$, the image 
$\tR_j(J)$ is one of the finitely many intervals in the family  
\emr{$$
\mQ:=\bigcup_{I\in\mJ} \mQ_{(I)}.
$$ 
}Moreover, the distortion coefficients are uniformly bounded: 
there exists a constant $C_5$ independent of $j$ such that  
$$\varkappa(\tR_j, J) \le C_5.
$$
\end{lemma}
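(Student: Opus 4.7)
The proof has two components: showing $\tR_j(J) \in \mQ$, and bounding $\varkappa(\tR_j, J)$ uniformly in $j$. For the image claim, I would argue combinatorially. By definition of $\widetilde{N}_j$, the restriction $R^j|_J$ is a diffeomorphism onto a connected component of $I \setminus \tN_{(I)}$ for the unique $I = (a_-, a_+) \in \mJ$ with $R^j(J) \subset I$. The connected components of $I \setminus \tN_{(I)}$ fall into three classes: the middle $J_{(I)}$, the left-comb pieces $(g_-^{i+2}(a_+), g_-^{i+1}(a_+))$ for $i \geq 1$, and the right-comb pieces $(g_+^{i+1}(a_-), g_+^{i+2}(a_-))$ for $i \geq 1$. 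The explicit formula for $\tR_{(I)}$ sends a left-comb piece via $g_-^{-i}$ onto $(g_-^2(a_+), g_-(a_+))$, a right-comb piece via $g_+^{-i}$ onto $(g_+(a_-), g_+^2(a_-))$, and acts as the identity on $J_{(I)}$, which is itself (up to the two middle points) the union of the three $\mQ$-intervals associated with $I$. Combining these cases yields the image claim.

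For the distortion bound, I would apply Proposition~\ref{p:sum} to $\tR_j|_J$ written as a word in $\mathcal{G}$, so that if $J_k$ denotes the sequence of intermediate images of $J$ along this composition then $\varkappa(\tR_j, J) \leq C_{\mathcal{G}} \sum_k |J_k|$. The sum splits into the $\tR_{(I)}$-acceleration contribution and the $R^j$-iteration contribution. The acceleration contribution collects the lengths of the iterates $g_\pm^{-m}(R^j(J))$ for $m = 0, \dots, i$, plus finitely many auxiliary intermediate images per generator step; since these iterates are pairwise disjoint subintervals of the appropriate comb, this contribution is bounded by a constant times $|I|$. The iteration contribution $\sum_{m=0}^{j-1} |R^m(J)|$ is estimated by organizing the $R$-orbit of $J$ into \emph{excursions} through comb regions near the parabolic fixed points of the various first-returns $g_-, g_+$ associated with each interval $I_i \in \mJ$, separated by \emph{transitions} through the middle regions $J_{(I_i)}$. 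Within a single excursion, consecutive iterates differ by $g_\pm^{-1}$ and cover pairwise disjoint comb pieces, so the sum over one excursion is bounded by $|I_i|$; the transitions, in turn, realize an induced Markov map on the finite family $\mQ$ which, by Lemmas~\ref{l:ends} and~\ref{l:der=1} together with the bounded-distortion estimate established inside the proof of Lemma~\ref{l:ends}, is uniformly expanding, so its backward iterates yield a geometric series of lengths. Summing over excursions and transitions gives $\sum_m |R^m(J)| \leq C$ uniformly, hence $\varkappa(\tR_j, J) \leq C_5$ independent of $j$.

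The hard part will be the careful verification of this excursion/transition decomposition, and in particular the claim that the induced transition map on $\mQ$ is genuinely uniformly expanding with bounded distortion. This is the analog of the first-return induction used in~\cite{DKN-MMJ} to handle parabolic behavior, adapted to the present Markovian framework; the delicate point is ensuring that excursions through distinct parabolic fixed points compose compatibly, so that the total contribution stays bounded by a constant depending only on $\mathcal{G}$ and on the finite partition $\mJ$ (and not on the iteration depth $j$ or on the particular $J \in \widetilde{\mJ}_j$).
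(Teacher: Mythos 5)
The first part of your argument (identifying $\tR_j(J)$ with a $\mQ$-interval via the Markov structure) is essentially what the paper intends and is fine. For the distortion bound, however, you take a genuinely different route from the paper, and there is a gap in it.

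Your plan is to bound $\sum_k |J_k|$ directly, decomposing the $R$-orbit of $J$ into excursions through the parabolic combs and transitions through the middle regions $J_{(I)}$. The problem is the crucial step where you claim that "the transitions realize an induced Markov map on $\mQ$ which \ldots is uniformly expanding, so its backward iterates yield a geometric series of lengths." This uniform expansion is \emph{not} available at this stage: it is essentially the content of Corollary~\ref{c:needed}, which comes \emph{after} the present lemma and itself relies on Lemma~\ref{l:diameters} and on the very bounded-distortion estimate you are trying to prove. Lemmas~\ref{l:ends} and~\ref{l:der=1} give you control of distortion for a \emph{single} admissible word, the existence of the first-returns $g_\pm$ with parabolic fixed points, and the absence of interior fixed points; they do not by themselves give you uniform expansion of the composed transition map, and each excursion through a comb contributes a full $|I_i|$ worth of length. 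Without a geometric decay of the preimage lengths, the contribution of infinitely many excursions is not summable, and you correctly flag this yourself as "the hard part."

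The paper sidesteps this entirely by working with the function $\widetilde S_{\gamma}$, which has already been introduced and shown to be continuous and finite on $\widetilde M_{\gamma}$. Writing $\tR_j|_J = R^{k}$ (with $k = k_J \ge j$), the paper observes that the inverse path $R^{-k}$ starting at points of $\tR_j(J)$ is a geodesic lying in the cone $\mC_{\gamma}$ (where $I \supset \tR_j(J)$ is a component of $\widetilde M_{\gamma}$), so the associated sum of intermediate derivatives is bounded by $\sup_{J_{(I)}} \widetilde S_{\gamma}$. Since $\tR_j(J) \subset J_{(I)}$ is compactly contained in $\widetilde M_{\gamma}$ (over the finitely many $I\in\mJ$), this supremum is a uniform constant. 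Combined with Proposition~\ref{p:sum}, Proposition~\ref{bound}, and identity~\eqref{eq:inverses} (which converts the distortion of $R^{-k}$ on $\tR_j(J)$ into the distortion of $\tR_j$ on $J$), this gives the uniform bound immediately, with no need for the excursion/transition decomposition or for any expansion statement. In short: the quantity you would need to establish to make your argument close — summability of the intermediate lengths — is exactly what the finiteness and continuity of $\widetilde S_{\gamma}$ were designed to package, and the paper invokes that tool rather than trying to re-derive it hand over hand.
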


\begin{proof}
The first claim of the lemma is a direct consequence of the Markov property of~$R$ and the definition 
of $\tR_{j}$. For the second, first write the restriction of $\widetilde{R}_j$ to $J$ in the form $R^k$, 
where $k = k_J \geq j$. Next, notice that the sums of the derivatives along the path $R^{-k}$ starting 
at points in the image interval $\tR_j(J)$ are uniformly bounded. Indeed, this path corresponds to a 
geodesic in $\mC_{\gamma}$, where the interval $I \in \mJ$ containing $\tR_j(J)$ is a connected 
component of $\widetilde{M}_{\gamma}$. Moreover, the image $\widetilde{R}_j (J)$ lies inside an interval of the 
form \emr{$(g_-^2 (x_+), g_+^2(x_-))$}; thus, it is bounded away from the endpoints of $I$, which yields the desired 
uniform bound for the sum of derivatives. Finally, knowing that this sum is uniformly bounded, Proposition 
\ref{p:sum} together with property (\ref{eq:inverses}) guarantee the desired control of distortion.  
\end{proof}

\vspace{0.1cm}

\begin{lemma}\label{l:diameters}
The diameter of the partition $\widetilde{\mJ}_j$ tends to zero as $j \to \infty$.
\end{lemma}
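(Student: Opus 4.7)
The plan is to combine the uniform bounded distortion of $\widetilde{R}_j$ established in the preceding lemma with an estimate showing that $\widetilde{R}_j$ expands arbitrarily strongly as $j\to\infty$. Concretely, since $\widetilde R_j(J)\in\mQ$ lies in a finite family and distortion is bounded by $C_5$, one has for every $J\in\widetilde{\mJ}_j$ and $y\in J$,
\[
|J| \,\le\, e^{C_5}\,\frac{L_0}{\widetilde{R}_j'(y)}, \qquad L_0 := \max_{Q\in\mQ}|Q|,
\]
so the lemma reduces to showing that $\min_{y\in J, \, J\in\widetilde{\mJ}_j} \widetilde{R}_j'(y) \to \infty$ uniformly as $j\to\infty$.

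To obtain this uniform expansion, I would use the decomposition $\widetilde{R}_j|_J = \widetilde{R}_{(I)}\circ R^j$ with $I = R^j(J)\in\mJ$ together with the following two observations. First, the Markov map $R^j$ maps $J$ diffeomorphically onto the fixed-length interval $I$, so by distortion $(R^j)'(y)\sim |I|/|J|$. Second, the escape map $\widetilde{R}_{(I)}$, on each piece $(g_\pm^{k+2}(\cdot),g_\pm^{k+1}(\cdot))$ (with $k \geq 1$), equals $g_\pm^{-k}$; since these source intervals shrink polynomially in $k$ while the target lies in the finite family $\mQ$ of definite size, the derivative of $g_\pm^{-k}$ there is large (and grows with $k$), by the uniform distortion of admissible words proved in Lemma~\ref{l:disj}. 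Thus either $R^j$ already expands strongly on $J$ (when the Markov word for $R^j|_J$ is ``rich''), or the orbit $R^i(y)$ has loitered in parabolic regions (which forces a large escape time $k$, hence a large $\widetilde{R}_{(I)}'$-factor).

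Turning this dichotomy into a quantitative uniform bound is the main technical step. I would make it precise by a contradiction argument: if $|J_j|\ge\delta$ for a subsequence, then after passing to subsequences (using finiteness of $\mJ$ and $\mQ$) both $R^j(J_j)=I$ and $\widetilde{R}_j(J_j)=Q$ stabilize, and the group elements $\widetilde{R}_j^{-1}|_Q$ have increasing word length but derivatives uniformly bounded above and below on the fixed interval $Q$. By Arzel\`a--Ascoli we would extract a nontrivial $C^1$-limit, yielding a non-discrete element in the local $C^1$-closure of $G$. Applying the commutator technique of Proposition~\ref{p:to-id} together with Proposition~\ref{p:affine}, this would produce a local flow in the closure of $G$, which (combining with Sacksteder's theorem as in Case~1 of the proof of the Main Theorem) would let us map the supposedly non-expandable point $x_+$ to a neighborhood of a hyperbolic fixed point while keeping the derivative bounded away from zero — producing an element of $G$ with derivative strictly greater than $1$ at a non-expandable point, a contradiction. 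The main obstacle is this extraction of a non-trivial $C^1$-limit from a sequence of distinct group elements of increasing word length; this relies crucially on the bounded distortion $C_5$ of the previous lemma, which ensures the $C^1$-equicontinuity needed to apply Arzel\`a--Ascoli.
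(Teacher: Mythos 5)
Your plan diverges fundamentally from the paper's, and the key step of your contradiction argument does not go through. The final move is: "a local flow in the $C^1$-closure would let us map the supposedly non-expandable point $x_+$ to a neighborhood of a hyperbolic fixed point\ldots{} producing an element with derivative $>1$ at a non-expandable point, a contradiction." But $x_+$ has \emph{not} been shown to be non-expandable. All that is known about $x_+$ at this stage (Lemma~\ref{l:der=1}) is that $g_+'(x_+)=1$ and $g_+$ has no interior fixed point in $I$; other group elements could still expand at $x_+$. More to the point, the standing hypothesis of \S\ref{s:cones} is only that some $S_\gamma(y)<\infty$ — no non-expandable point is assumed to exist, and the goal of the section is precisely to \emph{establish} property \smin{} (including, possibly, $\NE=\emptyset$). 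So a contradiction against non-expandability of $x_+$ is not available. One could try to salvage the route via Remark~\ref{r:NE-C1} (a local flow forces $\NE=\emptyset$, so \smin{} holds vacuously), but that yields "either the diameters shrink or \smin{} holds," which short-circuits Proposition~\ref{p:geod} rather than proving the lemma as stated; it also leaves the Markov-partition structure (Remark~\ref{r:rare}) unproved. Separately, the Arzel\`a--Ascoli extraction of a nontrivial $C^1$-limit needs $C^1$-equicontinuity, which the bound $\varkappa(\widetilde{R}_j,J)\le C_5$ alone does not supply; one would have to pass through complex (or $C^2$) control as in Proposition~\ref{bound-complex}, which you do not invoke.

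The paper's own proof is shorter and purely combinatorial, and does not go near non-discreteness. It observes that near the right (resp.~left) endpoint of an $I\in\mJ$, iterating $R^{-1}$ follows the first-return $g_+$ (resp.~$g_-$); since $g_+\neq g_-$ and $g_+(\overline I)$, $g_-(\overline I)$ are disjoint subintervals of $I$, each of length at least $\tfrac{1}{2}e^{-C_3}|I|$ (as recorded in the proof of Lemma~\ref{l:ends}), every $I\in\mJ$ is split by the refinement into at least two pieces, each taking up a definite proportion of $I$ — hence each piece has length at most $(1-\tfrac{1}{2}e^{-C_3})|I|$. The uniform bounded-distortion estimate for $\widetilde R_j$ from the preceding lemma then propagates this definite ratio to every scale, giving geometric decay of the diameters. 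Your approach, even where salvageable, trades this elementary geometric-decay argument for a heavy non-discreteness machinery that is not needed here and whose terminal contradiction is misidentified.
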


\begin{proof} 
Very close to the right ({\em resp.,} left) endpoint of an interval $I$ in $\mJ$, iterations of~$R^{-1}$ 
correspond to following the path given by the corresponding first-return $g_+$ ({\em resp.,} $g_{-}$). Since 
the maps $g_{-},g_{+}$ are different and the images of $I$ under them are disjoint, this implies that each 
interval in $\mJ$ is eventually divided into at least two subintervals. Actually, according to the construction 
({\em e.g.}, the proof of Lemma \ref{l:ends}),  
each of these intervals retain a proportion of the length of $I$ uniformly bounded from below (at least 
equal to $1/2e^{C_3}$).  Together with the uniform control of distortion provided by the preceding lemma, 
this obviously implies the desired convergence.
\end{proof}

\vspace{0.1cm}

By combining these two lemmas, we finally obtain our desired 

\vspace{0.1cm}

\begin{corollary}\label{c:needed}
There exists $j$ such that $(\tR_j) ' (y)>1$ for all $y \in \Sc\setminus \tN_j$.
\end{corollary}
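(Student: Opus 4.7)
The plan is to combine the two preceding lemmas in a direct way: the bounded-distortion estimate $\varkappa(\widetilde{R}_j, J) \le C_5$ together with the fact that $\widetilde{R}_j(J)$ belongs to a finite (hence lower-bounded-in-length) family $\mQ$, and the fact that the diameter of $\widetilde{\mJ}_j$ tends to zero, force the derivative of $\widetilde{R}_j$ to become arbitrarily large uniformly on $\Sc \setminus \widetilde{N}_j$.

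More precisely, first I would set $\delta_0 := \min_{Q \in \mQ} |Q|$, which is strictly positive since $\mQ$ is a finite collection of (nondegenerate) intervals. For any $j$ and any $J \in \widetilde{\mJ}_j$, the image $\widetilde{R}_j(J)$ lies in $\mQ$, so $|\widetilde{R}_j(J)| \ge \delta_0$. By the mean value theorem there exists $y^* \in J$ with $\widetilde{R}_j'(y^*) = |\widetilde{R}_j(J)|/|J| \ge \delta_0/|J|$. The bounded-distortion estimate then yields, for every $y \in J$,
\[
\widetilde{R}_j'(y) \ge e^{-C_5}\,\widetilde{R}_j'(y^*) \ge \frac{e^{-C_5}\,\delta_0}{|J|}.
\]

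Now, by Lemma~\ref{l:diameters}, the diameter of $\widetilde{\mJ}_j$ tends to $0$ as $j \to \infty$. Hence we can choose $j$ large enough so that $|J| < e^{-C_5}\,\delta_0$ for every $J \in \widetilde{\mJ}_j$. For such $j$, the displayed inequality gives $\widetilde{R}_j'(y) > 1$ for every $y \in \Sc \setminus \widetilde{N}_j$, as desired.

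I do not anticipate any real obstacle: the whole argument is a soft combination of compactness (finiteness of $\mQ$), bounded distortion, and shrinking of partition elements, all of which are in hand. The only point requiring minor care is to note that $\delta_0 > 0$ (which is immediate because $\mQ$ has finitely many members, each a genuine open interval), so there is nothing subtle to overcome here.
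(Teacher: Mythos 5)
Your proof is correct and takes essentially the same approach as the paper: combine the finite list $\mQ$ (giving a uniform lower bound on $|\widetilde{R}_j(J)|$), the bounded distortion $\varkappa(\widetilde{R}_j,J)\le C_5$, and the shrinking diameter of $\widetilde{\mJ}_j$ to force the derivative above $1$ once $j$ is large.
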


\begin{proof}
It suffices to take $j$ such that the diameter of the partition $\widetilde{\mJ}_j$ is less than 
$$
\varepsilon_0:=\frac{1}{2}\min_{I \in \mQ} |I| \cdot e^{-C_5}.
$$
Indeed, as $\tR_j(I)\in \mQ$, for all $y\in J \in \widetilde{\mJ}_j$ we have 
$$
\tR_j'(y) \ge \frac{|\tR_j(J)|}{|J|} \cdot 
e^{-\varkappa(\tR_j,J)} \ge \frac{\min_{U \in \mQ} |U|}{\varepsilon_0}  \cdot e^{-C_5} \ge 2,
$$
which shows the lemma. \end{proof}

We can finally conclude the proof of Proposition \ref{p:suma-divergente}, 
hence that of Proposition \ref{p:geod}. Indeed, we have ensured that our procedure of 
``topological expansion'' by iterations of $R$ indeed generates derivatives greater than $1$ everywhere, 
except on the finite set $N$ and its preimages under this procedure. Thus, $\NE(G)\subset G(N)$. Now, 
every point of $N$ is a fixed point of the corresponding $g_+$, due to Lemma~\ref{l:ends}. Hence, every 
point of the orbit $G(N)$ is also a fixed point of the corresponding conjugate  of~$g_+$. 
We have thus established property~\smin{} for the group $G$.


\section{The case of an exceptional minimal set}
\label{s:modifications}

Most of the previous arguments generalize almost immediately to the case of an action with an 
exceptional minimal set, provided we mostly work with points therein instead of the whole circle.  
Below we sketch the involved steps just stressing the points where some significant difference appears. 

Assume first that $G$ is a (finitely-generated, higher-rank) free subgroup of 
$\mathrm{Diff}_+^{\omega} (\mathbb{S}^1)$ admitting an exceptional minimal set $\Lambda$ (the case of 
a non-necessarily free group will be considered later). Again, the main issue consists in proving an analogous 
to Theorem~\ref{t:exp}: either $G$ satisfies property \sLmin, or there exist positive constants $c,\lambda$ 
such that for all $x \in \Lambda$ and all $n \geq 1$,
\begin{equation}\label{eq:sum-exp}
\sum_{g \in B(n)} g'(x) \geq c e^{\lambda n}.
\end{equation}
This will be shown later. For the moment, let us see why the second possibility leads to a contradiction 
whenever the set $\NE \cap \Lambda$ is nonempty. Actually, as in the minimal case, such a contradiction 
will be established under the assumption of super-quadratic growth of sum of derivatives. 

Fix $x_0 \in \mathrm{NE} \cap \Lambda$. As in \S \ref{s:proof}, define 
$x_n = f_n (x_0) \neq x_0$, $f_n \in B(n)$, as being the point 
in~$X_n := \{ g(x_0) \mid g \in B(n) \}$ that is closest to 
$x_0$ on the right if $x_0$ is non-isolated in $\Lambda$ from the right; otherwise, 
consider the point $x_n \neq x_0$ of $X_n$ that is closest on the left to $x_0$. 
Denote by $I_n$ the interval of endpoints 
$x_0,x_n$. Notice that the length of $|I_n|$ converges to zero, though we would like to show that 
this convergence holds at a rate of order $r_n' := n / S_{[n/2]}$, where
$$S_n := \sum_{g \in B(n)} g'(x_0).$$ 
To prove this, we notice again that the 
intervals $g(I_n)$, $g \in B([n/2])$, are ``almost'' disjoint; more precisely, the multiplicity growths linearly 
with $n$. (Compare Lemma \ref{l:cover}.) 
Since $r_n' = o(1/n)$, the distortion coefficient  of such a $g$ on $I_n$ is well behaved, which 
allows establishing the desired rate of convergence. (Compare Lemma \ref{l:decreasing}.)

Complex control of distortion ({\em c.f.,} Proposition \ref{bound-complex}) then shows that for every $g \in B(n)$, 
$$\varkappa \Big( g, U_{r_n'}^{\bbC}(x_0) \Big) \le C_2 n r_n'.$$ 
Therefore, the maps $\widetilde{f}_n(y)  := \frac{1}{r_n'} \big( f_n(x_0 + r_n' y)-x_0 \big)$ 
converge to the identity in the~$C^1$ topology on $U_1^{\bbC}(0)$. (Compare Lemma~\ref{l:id}.)
As in the minimal case ({\em c.f.,} Lemma \ref{l:free-gen}), 
passing to a subsequence, we have that~$f_{n_i}$ and $f_{n_{i+1}}$ 
generate a free group for each~$i$. This allows performing the commutators procedure, thus finding a local flow 
in the closure of the group. Nevertheless, the orbit of $x_0$ under such a flow is an interval, which is absurd 
since it must be contained in $\Lambda$.

\vspace{0.1cm}

We have hence proved that (\ref{eq:sum-exp}) cannot hold for all $x \!\in\! \Lambda$ and 
all $n \geq 1$. Let us next prove that the failure of (\ref{eq:sum-exp}) leads to property \sLmin. 
As we will see, the proof is not a direct translation of the arguments given for the minimal case. 
Several modifications are needed, mainly because we must concentrate on points of $\Lambda$, 
and not of arbitrary points in the circle. We proceed in several steps, invoking the analogue 
statements for the minimal case at each step. 

First of all, the growing trees argument 
of \S \ref{s:growth} shows that the failure of (\ref{eq:sum-exp}) implies that there must exist $x \in \Lambda$ and 
$\gamma \in \mathcal{G}$ such that for every $n \geq 1$ and every geodesic $\gamma_n \cdots \gamma_1$ 
starting with $\gamma_1 = \gamma$,
$$\sum_{i=1}^n (\gamma_i \cdots \gamma_1)'(x) \leq 2.$$
We may hence consider the (continuous) functions 
$S_{\gamma}$ and $\widetilde{S}_{\gamma}$ of \S \ref{s:cones}, as well as the (open) sets 
$M_{\gamma}$ and $\widetilde{M}_{\gamma}$. However, it will useful to consider the intersections 
$$M_{\gamma}^{\Lambda} := M_{\gamma} \cap \Lambda, \qquad 
\widetilde{M}_{\gamma}^{\Lambda} = \widetilde{M}_{\gamma} \cap \Lambda.$$
These sets satisfy analogous relations to those provided by Lemma \ref{l:images}; in particular, they are 
all nonempty. The fact that the intersection of all the $\widetilde{M}_{\gamma}^{\Lambda}$ is empty 
(hence the disjointness of all $\widetilde{M}_{\gamma}^{\Lambda}$) follows from the next analog 
of Proposition \ref{p:geodesic-free}.

\vspace{0.1cm}

\begin{lemma} \label{l:inf-sum}
For each $x \in \Lambda$ there exist geodesics $g = \gamma_n \cdots \gamma_1$ with arbitrarily 
large sum of intermediate derivatives.
\end{lemma}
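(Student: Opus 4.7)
The plan is to mirror the proof of Proposition~\ref{p:geodesic-free}, with the key adjustment that the expanding element produced by Sacksteder's theorem must be conjugated so that its hyperbolic fixed point lands in a neighborhood of our prescribed point $x \in \Lambda$. The argument is by contradiction.

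Suppose that for some $x \in \Lambda$ there is a uniform constant $M$ such that, for every reduced word $\gamma_n \cdots \gamma_1$ in $G$, one has
$\sum_{j=0}^{n-1}(\gamma_j \cdots \gamma_1)'(x) \leq M.$
Applying Proposition~\ref{bound} with $\delta := \min\{\log 2/(2 C_{\mathcal{G}} M), \, 1/4 \}$, we get a uniform upper bound, say $C := 2 C_{\mathcal{G}} M \delta$, on the distortion coefficient $\varkappa(g, U_{\delta/2}(x))$ for every $g \in G$. Note that $U_{\delta/2}(x)$ has length at most $1/4$, so in particular it is a proper subinterval of $\Sc$.

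Next, we invoke Sacksteder's theorem in its classical form for groups of $C^2$ diffeomorphisms preserving an exceptional minimal set (which applies directly here, since no invariant probability measure can exist in this setting): there exist $f \in G$ and $y_0 \in \Lambda$ with $f(y_0) = y_0$ and $f'(y_0) > 1$. Since the action of $G$ on $\Lambda$ is minimal and $x \in \Lambda \cap U_{\delta/2}(x)$, the orbit of $y_0$ under $G$ is dense in $\Lambda$, so there exists $h \in G$ with $h(y_0) \in U_{\delta/2}(x)$. The conjugate $\widetilde{f} := h f h^{-1}$ then belongs to $G$ and fixes the point $\widetilde{y}_0 := h(y_0) \in U_{\delta/2}(x)$, with the same multiplier $\widetilde{f}'(\widetilde{y}_0) = f'(y_0) > 1$.

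To conclude, choose an integer $k$ such that $(\widetilde{f}^k)'(\widetilde{y}_0) = f'(y_0)^k > \frac{1}{\delta e^{C}}$. Since $\widetilde{f}^k \in G$, the uniform distortion bound on $U_{\delta/2}(x)$ applies to it, and yields $(\widetilde{f}^k)'(y) > \frac{1}{\delta}$ for every $y \in U_{\delta/2}(x)$. Consequently, the image $\widetilde{f}^k(U_{\delta/2}(x))$ has length strictly greater than $1$, which is impossible on the circle. This contradiction proves the lemma. The only real subtlety lies in ensuring that Sacksteder's hyperbolic fixed point can be translated into the prescribed neighborhood of $x$; this is exactly where the hypothesis $x \in \Lambda$ enters (through minimality on $\Lambda$), and nowhere else is the setting of an exceptional minimal set used differently than in the minimal case.
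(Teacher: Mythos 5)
Your proof is correct and takes essentially the same route as the paper: assume a uniform bound on the sums, deduce a uniform distortion bound on a fixed neighborhood of $x$ via Proposition~\ref{bound}, and reach a contradiction by conjugating the classical Sacksteder hyperbolic fixed point into that neighborhood and iterating. The paper only sketches this (deferring to the argument of Proposition~\ref{p:geodesic-free}), whereas you make the conjugation step explicit, but the underlying idea is identical.
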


\begin{proof} Otherwise, as in the proof of Proposition \ref{p:geodesic-free}, the distortion of every 
group element on a small neighborhood $U$ of $x$ would be uniformly bounded. Taking such an 
element $f$ having an hyperbolically repelling fixed point inside (this is guaranteed by the classical 
Sacksteder's theorem), we would thus conclude that the length of the image $f^j (U)$ is greater 
than 1 for $j$ large enough, which is absurd. 
\end{proof}

The reason for dealing only with points in $\Lambda$ is given by the next 

\begin{lemma} \label{l:exterior-components}
Let $J$ be a connected component of $\mathbb{S}^1 \setminus \Lambda$. 
For all but finitely many points $x \in J$, one has
$$\sum_{g \in G} g'(x) < \infty.$$ 
Moreover, this sum uniformly converges on compacts subsets of the complement in $J$ of the finite subset above. 
\end{lemma}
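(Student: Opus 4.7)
The plan is to decompose $G$ into cosets of the stabilizer $H := \mathrm{Stab}_G(J) = \{g \in G : g(J) = J\}$ and handle the contribution of each coset uniformly. The key preliminary step is to show that $H$ is trivial or infinite cyclic. Since $H$ preserves orientation and fixes $J$ setwise, every $h \in H$ fixes both endpoints of $J$, so $H|_{\overline{J}}$ is a subgroup of $\mathrm{Diff}^{\omega}_+(\overline{J})$ fixing $\partial J$. As a subgroup of the free group $G$, $H$ is itself free; if it had rank $\geq 2$, Nakai's theorem applied to its action near $\partial J$ would produce a local flow in the $C^1$-closure of $H$ fixing a boundary point of $J$. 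Combining such a flow with a hyperbolic element provided by Sacksteder's theorem in the spirit of the end of Case~1 in \S\ref{s:proof}---via a conjugating $\phi \in G$ sending $\partial_+ J$ close to a hyperbolic fixed point in $\Lambda$---leads to a contradiction, forcing $H$ to be abelian, and hence cyclic. Let $h_0$ be a generator of $H$ when the latter is nontrivial. The announced exceptional set will be $F_H := \Fix(h_0) \cap J$, which is finite by real-analyticity.

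Next, pick a shortest word representative $g_i = \gamma_{n_i} \cdots \gamma_1$ in $\mathcal{G}$ for each coset of $G/H$. A crucial combinatorial observation is that the intermediate images $\gamma_k \cdots \gamma_1(J)$, $k = 0, \ldots, n_i$, are pairwise disjoint components of $\mathbb{S}^1 \setminus \Lambda$: if two of them coincided, the intermediate word relating them would belong to a conjugate of $H$ through the corresponding prefix, and pushing this stabilizer element to the right of the prefix would yield a strictly shorter representative of the coset $g_i H$, contradicting minimality of $g_i$. Since these intermediate images are pairwise disjoint subintervals of $\mathbb{S}^1 \setminus \Lambda$, their total length is at most~$1$, and Proposition~\ref{p:sum} gives $\varkappa(g_i, J) \leq C_{\mathcal{G}}$ uniformly in $i$. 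Combined with the mean value theorem, this yields $g_i'(y) \asymp |J_i|/|J|$ for every $y \in J$, uniformly in $i$, where $J_i := g_i(J)$. Decomposing $G = \bigsqcup_i g_i H$ then gives
\[
\sum_{g \in G} g'(x) \;=\; \sum_i \sum_{h \in H} g_i'(h(x))\, h'(x) \;\asymp\; \frac{1}{|J|}\Big( \sum_i |J_i| \Big) \sum_{h \in H} h'(x) \;\leq\; \frac{C}{|J|} \sum_{h \in H} h'(x),
\]
so convergence of the full sum reduces, up to multiplicative constants, to convergence of $\sum_{h \in H} h'(x) = \sum_{k \in \mathbb{Z}} (h_0^k)'(x)$.

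For $x \in J \setminus F_H$, the point $x$ lies strictly between two consecutive fixed points $p_- < p_+$ of $h_0$ in $\overline{J}$; the iterates of a small neighborhood of $x$ by powers of $h_0$ have pairwise disjoint interiors and total length at most $p_+ - p_- \leq |J|$, so Proposition~\ref{p:sum} yields $\sum_k (h_0^k)'(x) < \infty$, uniformly bounded on compact subsets of $(p_-, p_+)$. At $y \in F_H$, conversely, $(h_0^k)'(y) = h_0'(y)^k$ and the doubly-infinite geometric sum diverges, confirming that $F_H$ is exactly the exceptional set. Uniform convergence on any compact $K \subset J \setminus F_H$ follows by a standard tail truncation: given $\varepsilon > 0$, first choose a finite set of cosets so that $\sum_{i \notin F} |J_i| < \varepsilon$ (possible since $\sum_i |J_i| \leq 1$), then a finite set of stabilizer elements so that the tail in $k$ is less than $\varepsilon$ uniformly on $K$ (possible by the distortion bound on $h_0^k$ over fundamental domains). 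The main obstacle is the cyclicity of $H$: the flow-plus-expansion argument ruling out rank $\geq 2$ must be carefully adapted from the minimal setting, as the Nakai flow produced here only lives near $\partial J \subset \Lambda$ and does not a priori act on a neighborhood meeting the minimal set---one must use a conjugating element of $G$ to transport the flow near a Sacksteder hyperbolic fixed point before deriving the contradiction. The remaining steps are relatively direct applications of Sullivan's distortion lemma and the normal form in free groups.
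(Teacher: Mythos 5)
Your proof is correct, but it takes a noticeably more elaborate route than the paper's. The paper simply invokes Hector's theorem (\cite{hector}) to obtain that the stabilizer $H$ of $J$ is nontrivial and cyclic, and then observes that any $x \in J$ not fixed by the generator $h$ lies in a fundamental domain $D$ for $\langle h\rangle$, which is a \emph{wandering} interval for the whole group $G$ (all $G$-images pairwise disjoint). One then applies Proposition~\ref{p:sum} once: the intermediate images of $D$ along any reduced word are pairwise disjoint, so $\varkappa(g,D)\le C_{\mathcal{G}}$ uniformly, and since the final images $g(D)$ are also pairwise disjoint, $\sum_{g} g'(x) \lesssim \tfrac{1}{|D|}\sum_g |g(D)| \le \tfrac{1}{|D|}$. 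This is the whole argument. You instead (i) reprove cyclicity of $H$ via Nakai's theorem plus a Sacksteder-style contradiction — a valid argument that the paper itself uses elsewhere (Case~2 of \S\ref{s:proof}), but unnecessary here since Hector's theorem is available; and (ii) decompose $G$ into left cosets $g_iH$, prove a combinatorial disjointness lemma for minimal-length coset representatives, bound $g_i'$ via distortion, and finish by summing a geometric series over $H$. Your coset decomposition is in spirit the same disjointness argument, but factored differently; the paper's single wandering fundamental domain avoids the coset bookkeeping entirely and handles the stabilizer directly (the fundamental domain is wandering under \emph{all} of $G$, not just under $H$). One small inaccuracy in your exposition: the Nakai local flow at $\partial_+J$ does act on a two-sided neighborhood of the endpoint, and since $\Lambda$ accumulates on $\partial_+J$ from outside $J$ (no isolated points in a Cantor set), the flow already meets $\Lambda$ — so the extra conjugation step you flag as a ``main obstacle'' is not actually needed, and one can instead conclude directly that a flow orbit would have to be an interval in $\Lambda$, which is absurd.
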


\begin{proof} According to a result of Hector~\cite{hector} (a proof of which may be found in \cite{Navas-cociclo}), 
the stabilizer of $J$ in $G$ is nontrivial and cyclic, 
say generated by $h \in G$. Let $x_1,\ldots,x_k$ be the fixed points of $h$ inside $I$. We claim that the sum 
above is finite for $x \in J$ different from the $x_i$'s. Indeed,  each such point belongs to a wandering 
interval for the action of $G$, namely a fundamental domain for the action of $h$. The lemma then 
follows from a control of distortion argument.
\end{proof}


While applying the arguments of the previous section in the non-minimal case, one should have in 
mind that the sets~$\bM_{\gamma}$ are no longer pairwise disjoint, yet their intersections with~$\Lambda$ 
remain disjoint. At the same time, the endpoints of the sets $\widetilde{M}_{\gamma}$ may not belong to~$\Lambda$, 
which leads to certain problems with applying, for instance, the disjointness arguments for the admissible iterations. 

In order to handle these problems,
for each connected component $I = (x_-,x_+)$ of some $\widetilde{M}_{\gamma}$ that intersects $\Lambda$, 
let us define $\hat{x}_+=\hat{x}_+^*=x_+$ if $x_+$ is a point of $\Lambda$ that is non-isolated on both sides. 
Otherwise, we take $\hat{x}_+$ and $\hat{x}_+^*$ to be respectively the left and the right endpoints of the 
connected component $J$ of $\mathbb{S}^1\setminus \Lambda$ such that $x_+\in\overline{J}$. 
Similarly, if $x_-$ belongs to such a $\overline{J}$, we take $\hat{x}_-$ and $\hat{x}_-^*$ 
to be respectively the right and left endpoints of $J$; otherwise, we 
let~$\hat{x}_-=\hat{x}_-^*=x_-$ (see Fig.~\ref{f:x-hat}). Finally, we denote 
$$
\hat{I}:=(\hat{x}_-,\hat{x}_+), \quad \hI_+:=(\hat{x}_-,x_+), \quad \hI_-:=(x_-,\hat{x}_+).
$$

\begin{figure}
\begin{center}
\includegraphics[scale=0.9]{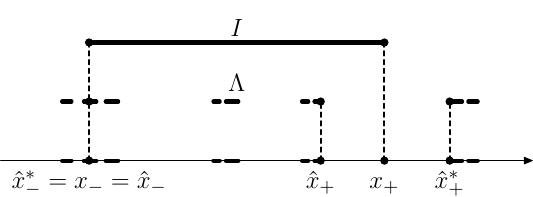} \includegraphics[scale=0.8]{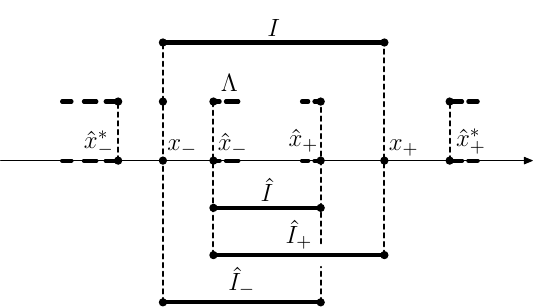} 
\end{center}
\caption{On the left: the interval $I$ (top), the minimal set $\Lambda$ (middle level), and two possible cases in the definition of 
$\hat{x}_{\pm}$, $\hat{x}_{\pm}^*$. On the right: the interval $I$, the set $\Lambda$, and the three 
intervals~$\hat{I}$, $\hat{I}_+$, $\hat{I}_-$.}\label{f:x-hat}
\end{figure}

Analogously to the minimal case, say that $g = \gamma_n \cdots \gamma_1$, 
$\gamma_1 \neq \gamma^{-1}$, is $\hI$-admissible if all intervals 
$\gamma_k \cdots \gamma_1(\hat{I})$, $k = 1,\ldots,n-1$, 
are disjoint from $\hat{I}$, and that it is a first-return if besides $g(\hat{I})$ intersects $\hat{I}$. 
The four lemmas below are analogs to Lemmas \ref{l:disj}, \ref{l:ends}, \ref{l:different}, \ref{l:lr-r}, and \ref{l:der=1}, 
respectively, that do hold in the minimal case.

\begin{lemma}
All the $\hI$-admissible images of $\hI_+$ are pairwise disjoint. The same holds for $\hI$-admissible images 
of $\hI_-$ (as well as of those of $\hI$ itself). Finally, each first-return~$g$ satisfies $g (\hat{I}) \subset \hat{I}$, 
$g (\hat{I}_{\pm}) \subset \hat{I}_{\pm}$,  and $\gamma_n = \gamma$. 
\end{lemma}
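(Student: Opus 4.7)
The plan is to follow the proof of Lemma \ref{l:disj} very closely, using the set $\widetilde{M}_\gamma^\Lambda$ in place of $\widetilde{M}_\gamma$ throughout, and exploiting the fact that all the boundary points of the intervals $\hat{I}$, $\hat{I}_\pm$ belong to $\Lambda$ (as endpoints of gap components of $\mathbb{S}^1\setminus \Lambda$, if they differ from $x_\pm$).

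First I would prove a containment lemma: for any $\hI$-admissible $g=\gamma_n\cdots\gamma_1$, the image $g(\hat{I})$ satisfies $g(\hat{I}\cap\Lambda)\subset \widetilde{M}_{\gamma_n}^\Lambda$, and moreover $g(\hat{I})$ is contained in the corresponding $\hat{J}$-type extension of some connected component $J$ of $\widetilde{M}_{\gamma_n}$ that meets $\Lambda$. The $\Lambda$-part follows from iterating Lemma \ref{l:images}.\ref{i:img}) starting from $\gamma_1(\hI\cap\Lambda)\subset \widetilde{M}_{\gamma_1}^\Lambda$ (here I use $\gamma_1\neq\gamma^{-1}$) and the $G$-invariance of $\Lambda$. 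For the containment in $\hat{J}$: any gap component of $\mathbb{S}^1\setminus\Lambda$ is mapped by $G$ to another such gap component, so the image of the gap parts $\hat{I}\setminus\hat{I}_{\text{core}}$ consists of endpoints of gap components, which fit precisely inside the analogous extensions of $J$. Analogous statements hold for $\hat{I}_\pm$, with the one-sided extensions being preserved since admissibility is a combinatorial property.

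Second, for the disjointness claim, suppose two $\hI$-admissible words $g=\gamma_n\cdots\gamma_1$ and $\overline{g}=\overline{\gamma}_m\cdots\overline{\gamma}_1$ yield $g(\hat{I}_+)\cap\overline{g}(\hat{I}_+)\neq\emptyset$. The intersection is a non-trivial interval whose endpoints (or some internal points) must lie in $\Lambda$, because the endpoints of $\hat{I}_+$, and hence of the admissible images, are all in $\Lambda$. Invoking the containment established above, these points of $\Lambda$ lie in $\widetilde{M}_{\gamma_n}^\Lambda\cap\widetilde{M}_{\overline{\gamma}_m}^\Lambda$, which by the disjointness part of the $\Lambda$-version of Lemma \ref{l:images}.1) forces $\gamma_n=\overline{\gamma}_m$. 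Stripping this common letter, one recurses on the shorter admissible words $\gamma_{n-1}\cdots\gamma_1$ and $\overline{\gamma}_{m-1}\cdots\overline{\gamma}_1$; this terminates exactly as in the minimal case, either identifying $g=\overline{g}$ or yielding a proper prefix that is a first-return, contradicting admissibility. The same argument applies verbatim to $\hat{I}_-$ and to $\hat{I}$.

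Third, for the first-return claim, given $g(\hat{I})\cap\hat{I}\neq\emptyset$, the intersection again contains a point of $\Lambda$ (e.g.\ a boundary point of the intersection, which belongs to $\Lambda$), so $\widetilde{M}_{\gamma_n}^\Lambda\cap\widetilde{M}_{\gamma}^\Lambda\neq\emptyset$, giving $\gamma_n=\gamma$. Since $g(\hat{I})$ is contained in an extension $\hat{J}$ of a connected component $J$ of $\widetilde{M}_\gamma$ meeting $\Lambda$, and $\hat{I}$ is such an extension of the component $I$, the intersection condition combined with the fact that distinct extensions $\hat{I}$, $\hat{J}$ associated to distinct components are automatically disjoint (they differ at the $\Lambda$-level) forces $J=I$, whence $g(\hat{I})\subset\hat{I}$ and $g(\hat{I}_\pm)\subset\hat{I}_\pm$.

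The main obstacle is verifying that the gap-endpoint extensions are preserved by the group action in a way compatible with admissibility — that is, establishing rigorously that $g(\hat{I}_+)\subset\hat{J}_+$ (the one-sided extension of the appropriate target component $J$), not merely $g(\hat{I}_+)\subset J$ plus some uncontrolled gap pieces. Once this bookkeeping is carried out, every other step is a routine adaptation of the minimal-case proof.
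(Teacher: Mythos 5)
Your proof follows the same adaptation of Lemma~\ref{l:disj} that the paper uses, with the sets $\widetilde{M}_\gamma^\Lambda$ and the hatted intervals. However, there is one genuine error and one missing observation.

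The error: you assert that ``all the boundary points of the intervals $\hat{I}$, $\hat{I}_\pm$ belong to $\Lambda$.'' This is correct for $\hat{I}=(\hat{x}_-,\hat{x}_+)$, but not for $\hI_\pm$. The right endpoint of $\hI_+=(\hat{x}_-,x_+)$ is the \emph{unhatted} point $x_+$, a boundary point of a connected component of $\widetilde{M}_\gamma$; nothing forces $x_+\in\Lambda$. In fact $\widetilde{M}_\gamma$ can have boundary points strictly inside gaps of $\Lambda$ (compare Lemma~\ref{l:exterior-components}: the sum of derivatives blows up only at finitely many points of each gap, and those are where boundary points of $\widetilde{M}_\gamma$ sit). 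Also note that $\hI_\pm$ are \emph{shrinkings} of $I$, not extensions, since $\hat{x}_-\geq x_-$ and $\hat{x}_+\leq x_+$; the ``gap-endpoint extension'' picture is backwards.

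The missing observation: even on the side where the boundary point does lie in $\Lambda$, mere membership in $\Lambda$ is not what the disjointness argument requires. Two open intervals with endpoints in $\Lambda$ can intersect in an open interval containing no $\Lambda$-points. What the argument actually needs is that $\hat{x}_-$ is an \emph{accumulation point of $\Lambda\cap\hI_+$}, i.e.~that $\Lambda$ accumulates on $\hat{x}_-$ from inside $\hI_+$ — this is exactly the property the hatted modification is engineered to deliver, and it is the one-line observation opening the paper's proof. With it, if $g(\hI_+)$ and $\overline{g}(\hI_+)$ intersect, one of the left endpoints (say $g(\hat{x}_-)$) lies in the open interval $\overline{g}(\hI_+)$, and the one-sided accumulation produces $\Lambda$-points in the open intersection; the rest of the stripping argument then runs as in Lemma~\ref{l:disj}. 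Your hedge ``(or some internal points)'' signals awareness of the issue but does not close it. Finally, the ``main obstacle'' you flag (proving $g(\hI_+)\subset\hat{J}_+$ for every admissible $g$) is not needed: the paper never establishes that general admissible images land inside hatted intervals. For first-returns it passes through the weaker containment $g(\hI_+)\subset I$ and then upgrades to $g(\hI_+)\subset\hI_+$ by the same one-sided accumulation observation, which is a strictly lighter route.
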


\begin{proof}
Due to the definition, the left endpoint $\hat{x}_-$ of $\hI_+$ is an accumulation 
point of~$\Lambda \cap \hI_+$. If two $\hI$-admissible images of $\hI_+$ under 
$g=\gamma_n\dots \gamma_1$ and $\overline{g}=\overline{\gamma}_m\dots \overline{\gamma}_1$ 
intersect, then due to the invariance of $\Lambda$, the interval 
$g(\hI_+)\cap \overline{g}(\hI_+)\subset \widetilde{M}_{\gamma_n} \cap \widetilde{M}_{\overline{\gamma}_m}$ 
intersects $\Lambda$ near its left endpoint. Hence, if both words $\gamma_n\dots \gamma_1$ and 
$\overline{\gamma}_m\dots \overline{\gamma}_1$ were nonempty, we would obtain 
$\gamma_n=\overline{\gamma}_m$ and get a contradiction in the same way as in 
the proof of Lemma~\ref{l:disj}.

Now, if $g$ is a first-return, the above arguments imply that $\gamma_n=\gamma$. 
Thus,~$g(\hI_+)~\subset~\widetilde{M}_{\gamma}$, and hence $g(\hI_+)\subset I$. 
Finally, the left endpoint of $g(\hI_+)$ is an accumulation point of~$\Lambda\cap g(\hI_+)$, 
which implies $g(\hI_+)\subset \hI_+$. In the same way we get $g(\hI_-)\subset \hI_-$, 
and putting together these two inclusions, we obtain $g(\hI)\subset \hI$.
\end{proof}

\begin{lemma}\label{l:mex-ends}
There exist first-returns $g_+$ and $g_-$, fixing~$x_+$ and~$x_-$, respectively. 
Moreover,
$$
g_+(\hat{x}_+)=\hat{x}_+, \quad g_+(\hat{x}_+^*)=\hat{x}_+^*, \quad 
g_-(\hat{x}_-)=\hat{x}_-, \quad g_-(\hat{x}_-^*)=\hat{x}_-^*.
$$
Finally, $g_- \neq g_+$. 
\end{lemma}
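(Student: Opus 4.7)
The plan is to mimic the proof of Lemma~\ref{l:ends}, working with $\hat{I}$ (or its enlargements $\hat{I}_\pm$) in place of $I$ and invoking the preceding lemma (disjointness of $\hat{I}$-admissible images of $\hat{I}$ and $\hat{I}_\pm$) in place of Lemma~\ref{l:disj}. Proposition~\ref{p:sum} then supplies a uniform distortion bound $C_3$ on $\hat{I}_\pm$ for every admissible element, together with a uniform bound $C_4$ on sums of intermediate derivatives along admissible geodesics; and the disjointness of first-return images leaves only finitely many first-returns $g_1, \ldots, g_m$ with $|g_j(\hat{I})| \geq \tfrac{1}{2} e^{-C_3} |\hat{I}|$.

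To produce $g_+$, I would argue by contradiction: assuming no $g_j$ fixes $x_+$, I would run the induction on word length from Lemma~\ref{l:ends} on the interval $\hat{I}_+ = (\hat{x}_-, x_+)$, whose right endpoint is exactly $x_+$. Since first-returns preserve $\hat{I}_+$ by the preceding lemma, the auxiliary set $\widetilde{U} = \bigcup_j g_j(\overline{\hat{I}_+} \setminus U)$ (with $U = \bigcup_j g_j(\overline{\hat{I}_+})$) sits in a compact subset of the open set $\widetilde{M}_\gamma$, so $\widetilde{S}_\gamma$ is bounded on $\widetilde{U}$ by continuity. The same inductive estimate as in Lemma~\ref{l:ends} then forces $\widetilde{S}_\gamma(x_+) < \infty$, contradicting $x_+ \in \partial \widetilde{M}_\gamma$. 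Thus some $g_j$, which we rename $g_+$, satisfies $g_+(x_+) = x_+$.

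The remaining fixed-point equalities come from the $\Lambda$-preserving dynamics. When $\hat{x}_+ \neq x_+$, the interval $J = (\hat{x}_+, \hat{x}_+^*)$ is the unique connected component of $\mathbb{S}^1 \setminus \Lambda$ adjacent to $x_+$; since $g_+$ is orientation-preserving, fixes $x_+$, and permutes the complementary intervals of $\Lambda$, it must satisfy $g_+(J) = J$, yielding $g_+(\hat{x}_+) = \hat{x}_+$ and $g_+(\hat{x}_+^*) = \hat{x}_+^*$. The symmetric argument on the left produces $g_-$ with the analogous properties. Finally, for $g_+ \neq g_-$: if $g_+ = g_- = g$, then every element of $G$ factors as an iterate of $g^{\pm 1}$ followed by an admissible word, so a fundamental domain $D \subset \hat{I}$ for $\langle g \rangle$ acting on $\hat{I}$ is $G$-wandering; since $I \cap \Lambda \ne \emptyset$ by standing hypothesis and $I \setminus \hat{I}$ lies inside complementary intervals of $\Lambda$, we have $\hat{I} \cap \Lambda \ne \emptyset$, so $D \cap \Lambda$ is a nonempty relatively open subset of $\Lambda$ whose $G$-orbit consists of pairwise disjoint sets, contradicting the minimality of $\Lambda$. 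I expect the main obstacle to be the careful execution of the induction on the enlarged interval $\hat{I}_+$ in the case $\hat{x}_+ \neq x_+$: one has to verify that both the reclassification of first-returns as ``large'' or ``small'' relative to $\hat{I}_+$ and the corresponding distortion estimates still cooperate with the decomposition of words into first-return prefix plus suffix, so that the inductive bound indeed propagates across the gap $J$ all the way to the point $x_+$.
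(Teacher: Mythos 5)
Your proof matches the paper's approach, which is exactly to repeat the argument of Lemma~\ref{l:ends} and then deduce $g_+(\hat{x}_+)=\hat{x}_+$, $g_+(\hat{x}_+^*)=\hat{x}_+^*$ from the invariance of $\Lambda$. In fact you fill in a detail that the paper's ``literally repeating'' glosses over: since in the exceptional case the sets $\widetilde{M}_\gamma$ need not be pairwise disjoint, disjointness of admissible images holds for $\hat{I}$, $\hat{I}_+$, $\hat{I}_-$ but not a priori for $I$ itself, so one must indeed run the Lemma~\ref{l:ends} induction on $\hat{I}_+$ (whose right endpoint is $x_+$) rather than on $I$ — this is precisely what you do.

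Your final worry about ``propagating across the gap $J$'' is unfounded: since $\hat{x}_-\ge x_-$, one has $\hat{I}_+ \subset I \subset \widetilde{M}_\gamma$, so $\hat{I}_+$ plays exactly the role that $I$ played in the minimal case, with its right endpoint $x_+\in\partial\widetilde{M}_\gamma$; once the preceding lemma gives disjointness of admissible images of $\hat{I}_+$ and the invariance $g(\hat{I}_+)\subset\hat{I}_+$ for first-returns, the induction goes through verbatim. Two small points to tidy up. First, your case split ``when $\hat{x}_+ \ne x_+$'' omits the case $\hat{x}_+ = x_+ \ne \hat{x}_+^*$ (when $x_+\in\Lambda$ is isolated on the right); the same $\Lambda$-invariance argument applies there, so it is harmless but should be stated. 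Second, in the $g_+\ne g_-$ step you assert $D\cap\Lambda\ne\emptyset$ merely from $\hat{I}\cap\Lambda\ne\emptyset$; what you really want to use is that $\hat{x}_+$ (or $\hat{x}_-$) is an accumulation point of $\Lambda\cap\hat{I}$ and that $g$ preserves $\Lambda$, so that \emph{some} (hence, pushing by powers of $g$, \emph{every}) fundamental domain meets $\Lambda$. With these touch-ups the proof is the paper's argument.
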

\begin{proof}
By repeating word by word the arguments of the proof of Lemma~\ref{l:ends}, we may find first-returns 
$g_{\pm}$ fixing the endpoints $x_{\pm}$ of~$I$, and we may check that $g_+\neq g_-$. 
Finally,~$g_+$ fixes $x_+$, and hence (due to the invariance of $\Lambda$) it also 
fixes~$\hat{x}_+$ and $\hat{x}_+^*$. The same arguments apply for $g_-$.
\end{proof}

\begin{lemma}\label{l:mex-lr-r}
If $g_+$ writes as $g_+ = \gamma_n \cdots \gamma_1$, then a right 
neighborhood of $\hat{x}_+^*$ is contained in $\widetilde{M}_{\widetilde{\gamma}}$, where 
$\widetilde{\gamma} := \gamma_n$ if $g_+$ is topologically contracting towards $\hat{x}_+^*$ 
in such a neighborhood, and $\widetilde{\gamma} := \gamma_1^{-1}$ otherwise. 
\end{lemma}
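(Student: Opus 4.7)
My strategy would be to follow the proof of Lemma \ref{l:lr-r} with $\hat{x}_+^*$ in place of $x_+$; the adaptation is possible because Lemma \ref{l:mex-ends} guarantees that $g_+$ fixes $\hat{x}_+^*$. First, I would invoke Proposition \ref{bound} to reduce the problem to showing $\widetilde{S}_{\widetilde{\gamma}}(\hat{x}_+^*) < \infty$: the control of distortion then automatically extends this finiteness to a whole right neighborhood of $\hat{x}_+^*$, giving the desired inclusion into $\widetilde{M}_{\widetilde{\gamma}}$.

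To bound $\widetilde{S}_{\widetilde{\gamma}}(\hat{x}_+^*)$, I would split each geodesic $\beta_m \cdots \beta_1$ with $\beta_1 \neq \widetilde{\gamma}^{-1}$ according to whether $g_+^{\pm 1}$ occurs as an initial prefix. For geodesics without such a prefix, I would decompose them as either $\hat{I}$-admissible or as a product of a first-return $g_j \neq g_\pm^{\pm 1}$ with a geodesic again starting by a letter different from $\widetilde{\gamma}^{-1}$, just as in Lemma \ref{l:lr-r}. In the admissible case, the disjointness of admissible images of $\hat{I}_+$ (the analog of Lemma \ref{l:disj} established just above) bounds their total length by $1$; Proposition \ref{p:sum} then gives a uniform bound on derivative sums along $\hat{I}$, which passes to $\hat{x}_+^*$ by continuity. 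In the first-return case, I would argue that $g_j(\hat{x}_+^*)$ lies bounded away from the endpoints of $\hat{I}$, in a compact region where $\widetilde{S}_{\widetilde{\gamma}}$ is a priori finite.

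For geodesics starting with $g_+^k$ in Case A (where $g_+$ contracts toward $\hat{x}_+^*$), I would decompose a right neighborhood of $\hat{x}_+^*$ into images under iterates of $g_+$ of a single fundamental domain; disjointness bounds the total length, and control of distortion converts this into a uniform bound on derivative sums along these iterates. Combined with the previous step applied to the suffix after $g_+^k$, this yields the desired finiteness. Case B (where $g_+^{-1}$ contracts at $\hat{x}_+^*$) would be treated by the symmetric argument with $\widetilde{\gamma} = \gamma_1^{-1}$, noting that this choice excludes geodesics starting with $g_+^{-1}$.

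The main obstacle I anticipate is the ``bad'' case $\hat{x}_+ < \hat{x}_+^*$, in which $\hat{x}_+^*$ is separated from $\hat{I}$ by a gap $J$ of $\Lambda$, so continuity of $\widetilde{S}_{\widetilde{\gamma}}$ alone cannot transfer bounds established on $\hat{I}$ across the gap to $\hat{x}_+^*$. To handle this, I would use that $g_+$ stabilizes $J$ (fixing both its endpoints by Lemma \ref{l:mex-ends}), so the fundamental-domain decomposition for $g_+$ still makes sense on a right neighborhood of $\hat{x}_+^*$; moreover, admissible orbits remain outside $\hat{I}$ by definition, so $g_j(\hat{x}_+^*)$ stays in one of the finitely many intervals bounded away from $\partial \hat{I}$ where $\widetilde{S}_{\widetilde{\gamma}}$ is uniformly finite.
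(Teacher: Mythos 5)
Your overall strategy---replicating the proof of Lemma~\ref{l:lr-r} at the point $\hat{x}_+^*$, using that $g_+$ fixes $\hat{x}_+^*$ by Lemma~\ref{l:mex-ends}, splitting geodesics according to whether $g_+^{\pm 1}$ is an initial prefix, and using fundamental-domain disjointness to control the iterates of $g_+$---is the right one and is indeed what the paper does. However, there are two issues.

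First, a misstatement in your reduction: you claim to reduce the problem to showing $\widetilde{S}_{\widetilde{\gamma}}(\hat{x}_+^*) < \infty$. This is false in general. When $g_+'(\hat{x}_+^*) = 1$ (the parabolic case, which is exactly what occurs when $\hat{x}_+^* = x_+$ and $g_+'(x_+)=1$), the geodesics $g_+^k$ themselves start with a letter $\neq \widetilde{\gamma}^{-1}$ and produce intermediate derivatives $(g_+^i)'(\hat{x}_+^*)=1$ for every $i$, so the supremum over $k$ is infinite and $\widetilde{S}_{\widetilde{\gamma}}(\hat{x}_+^*) = \infty$. This is precisely why Lemma~\ref{l:lr-r} is phrased as ``$x_+$ is the \emph{left endpoint}'' of a component of $\widetilde{M}_{\widetilde{\gamma}}$, and not as $x_+ \in \widetilde{M}_{\widetilde{\gamma}}$. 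The correct target is a uniform bound on the open right neighborhood with $\hat{x}_+^*$ removed; your later arguments are consistent with this, but the reduction as stated is unsound.

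Second, and more substantively, your treatment of the gap case $\hat{x}_+ < \hat{x}_+^*$ omits the ingredient the paper's (one-line) proof actually flags: Lemma~\ref{l:exterior-components}. You assert that after applying a first-return $g_j \neq g_\pm$, the image $g_j(\hat{x}_+^*)$ lands ``in one of the finitely many intervals bounded away from $\partial\hat{I}$ where $\widetilde{S}_{\widetilde{\gamma}}$ is uniformly finite.'' But in the non-minimal setting the a priori finiteness of $\widetilde{S}_{\widetilde{\gamma}}$ is only established on $\widetilde{M}_{\gamma} \cap \Lambda$, i.e.\ on points of $\Lambda$. The point $g_j(\hat{x}_+^*)$ is the right endpoint of the gap $g_j(J)$, and both it and nearby points inside gaps need a separate argument; the uniform bound you need on an interval cannot be obtained by compactness of $\widetilde{S}_{\widetilde{\gamma}}$ restricted to $\Lambda$ alone. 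Lemma~\ref{l:exterior-components} (via Hector's theorem on cyclic stabilizers of gaps) is exactly what supplies the finiteness and uniform convergence of $\sum_{g \in G} g'(x)$ on compact subsets of a gap away from finitely many bad points, and it is that lemma that the paper uses to push the estimate across and around gaps. Without it, your proof of the uniform bound in the gap case is incomplete.
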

\begin{proof}
The same arguments as those of proof of Lemma~\ref{l:lr-r} (combined with 
Lemma~\ref{l:exterior-components} in case $\hat{x}_+^*\neq x_+$) prove this lemma.
\end{proof}

\begin{lemma}\label{l:mex-der=1} 
Neither $g_{-}$ nor $g_+$ have fixed points on $\hat{I}$. Moreover, 
$g_{-}'(x_{-})\ge 1$, and $g_{+}'(x_{+}) \ge 1$.
\end{lemma}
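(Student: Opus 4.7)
My plan is to prove Lemma \ref{l:mex-der=1} by adapting the proof of Lemma \ref{l:der=1} to the exceptional-minimal setting. The two key adjustments will be: I will work with the possibly smaller interval $\hat{I}$ instead of $I$, and I will invoke Lemma \ref{l:inf-sum} (divergence of sums of derivatives along geodesics at every point of $\Lambda$) in place of Proposition \ref{p:geodesic-free}. Correspondingly, the conclusion is weakened to $g_\pm'(x_\pm)\ge 1$ rather than equality, because the ``other half'' $g_\pm'(x_\pm)\le 1$ (coming from the minimal case from $g_+(I)\subsetneq I$ together with the absence of fixed points) does not translate cleanly: one only has $g_+(\hat{I})\subset \hat{I}$ with $g_+(\hat{x}_+)=\hat{x}_+$, which controls the derivative at $\hat{x}_+$ rather than at $x_+$.

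For the inequality $g_+'(x_+)\ge 1$, I will argue by contradiction, assuming $\kappa := g_+'(x_+) < 1$. The strategy is to show that $\widetilde{S}_\gamma(x_+) < \infty$, which is absurd since $x_+ \in \partial I \subset \partial \widetilde{M}_\gamma$. Given a geodesic $\gamma_k \cdots \gamma_1$ with $\gamma_1 \ne \gamma^{-1}$, I factor out a maximal (possibly trivial) positive prefix $g_+^m$ -- note that $g_+^{-1}$ cannot appear as a prefix since it starts with $\gamma^{-1}$ -- and bound the $g_+^m$-contribution by the geometric series $\frac{1}{1-\kappa}\sum_{i=1}^n (\gamma_i\cdots\gamma_1)'(x_+)$ where $g_+=\gamma_n\cdots\gamma_1$. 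The remaining suffix contributes a uniformly bounded sum by the very same analysis as in the proof of Lemma \ref{l:ends}. This reproduces the geometric-series argument from Lemma \ref{l:der=1}.

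For the absence of fixed points of $g_+$ inside $\hat{I}$, I proceed again by contradiction. Suppose $g_+(y)=y$ for some $y \in \hat{I}$; discreteness of the fixed-point set of the analytic, nontrivial map $g_+$ lets me assume (after replacing $y$ by the largest fixed point strictly less than $\hat{x}_+$, recalling $g_+(\hat{x}_+)=\hat{x}_+$ from Lemma \ref{l:mex-ends}) that $(y,\hat{x}_+)$ contains no fixed point of $g_+$. Since $\hat{x}_+$ is non-isolated in $\Lambda$ from the left in both cases of its definition, I pick $x \in (y,\hat{x}_+)\cap \Lambda$, and will contradict Lemma \ref{l:inf-sum} by bounding uniformly the sum of derivatives along every geodesic at $x$. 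Any geodesic decomposes as $g_+^{\pm m}$ followed by a suffix starting with neither $g_+$ nor $g_+^{-1}$. The orbit of $x$ under $\langle g_+\rangle$ is trapped in $[y,\hat{x}_+]$, so consecutive images of a fundamental domain are disjoint, bounding the sum $\sum_m (g_+^{\pm m})'(x)$ via control of distortion (Proposition \ref{p:sum}); the suffix contribution is handled as in Lemma \ref{l:ends}, either as an $\hat{I}$-admissible word (bounded by $C_4$) or via a first-return $g_j \ne g_+$ sending $x$ into a region where $\widetilde{S}_\gamma$ is uniformly bounded. The symmetric statements for $g_-,x_-$ follow by reversing orientation.

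The main obstacle is the simultaneous requirement that the test point $x$ lie in $\Lambda$ (so that Lemma \ref{l:inf-sum} applies) and in a $g_+$-fundamental domain bounded by consecutive fixed points (so that the distortion-controlled bound on $\sum_m (g_+^{\pm m})'(x)$ holds). The non-isolation of $\hat{x}_+$ in $\Lambda$ from the left, which is built into the definition of $\hat{x}_+$ in both cases, is precisely the geometric input that makes this simultaneous choice possible and explains why the statement is formulated on $\hat{I}$ rather than on $I$.
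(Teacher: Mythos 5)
Your proof is correct and follows essentially the same approach as the paper's (the paper's own proof is terse, deferring almost entirely to the arguments of Lemma~\ref{l:der=1} and explicitly pointing to the fundamental-domain-in-$\Lambda$ observation and the $x_+ \in \widetilde{M}_\gamma$ contradiction). Your additional remarks --- that the conclusion weakens to an inequality because $g_+(\hat{I}) \subset \hat{I}$ only controls the derivative at $\hat{x}_+$, not $x_+$, and that the non-isolation of $\hat{x}_+$ in $\Lambda$ from the left is the crucial geometric input making the test point $x \in (y,\hat{x}_+)\cap\Lambda$ available --- correctly identify what is implicit in the paper's appeal to ``the same arguments.''
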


\begin{proof}
In the same way as in Lemma~\ref{l:der=1}, we notice that if $g_+$ had a fixed point inside~$\hI$, 
then in a small left neighborhood of $\hat{x}_+$ there would be a fundamental domain for its 
action intersecting~$\Lambda$. At any point of this domain (hence at some points of $\Lambda$), 
the sum of derivatives along all the geodesics would be uniformly bounded, which in contradiction 
with Lemma~\ref{l:inf-sum}. The inequality $g_{+}'(x_+)\ge 1$ follows using the same arguments 
as in Lemma~\ref{l:der=1}: otherwise, we would have $x_+\in \widetilde{M}_{\gamma}$.
\end{proof}

As we did in the minimal case, we denote by $\Phi_R(I)$ the connected component of 
$\widetilde{M}_{\gamma_1}$ intersecting $\gamma_1(I)$, where the element $g_+$ associated 
to $I$ writes as $g_+=\gamma_n\dots \gamma_1$. Again, we notice that $\Phi_R(I)$ 
and $\gamma_1(I)$ share the same right endpoint, as otherwise the 
point~$x_+~=~(\gamma_n\dots\gamma_2)(\gamma_1(x_+))$ would belong 
to $\widetilde{M}_{\gamma_n}=\widetilde{M}_{\gamma}$. Also, the map 
$\widetilde{g}_+$ corresponding to $\Phi_R(I)$ is 
$\gamma_1 g_+\gamma_1^{-1}=\gamma_1\gamma_n\dots \gamma_2$. 

Similarly, we define the map~$\JL$ associated to $g_-$. Finally, we denote by~$\hmI$ 
the set of connected components $I$ of sets $\widetilde{M}_{\gamma}$ that 
intersect~$\Lambda$. As in the minimal case, $\hmI$ decomposes into disjoint 
cycles corresponding to the application of~$\Phi_R$, and the same holds for~$\Phi_L$. 
Moreover, the next analog of Lemma \ref{l:finite} holds. 

\begin{lemma}\label{l:mex-finite}
There exists only a finite number of connected components of $\widetilde{M}_{\gamma}$ that intersect $\Lambda$.
\end{lemma}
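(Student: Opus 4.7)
I plan to follow closely the strategy of Lemma~\ref{l:finite} from the minimal case, substituting the core intervals $\hat{I}$ for the full components $I$. For each $I \in \hat{\mathcal{I}}$, the first-returns $g_\pm$ provided by Lemma~\ref{l:mex-ends} and the maps $\Phi_R$, $\Phi_L$ already introduced in the preceding paragraph endow $\hat{\mathcal{I}}$ with a dynamics in which each element belongs to a finite right cycle and a finite left cycle, with compositions around these cycles equal to $g_+$ and $g_-$ respectively. The verification that $\Phi_R(I)$ and $\gamma_1(I)$ share a right endpoint, and that the first-return associated to $\Phi_R(I)$ is the cyclic conjugate $\gamma_1\gamma_n\cdots \gamma_2$, carries over from the minimal case essentially verbatim, using this time the disjointness of the $\widetilde{M}_\gamma \cap \Lambda$'s (the $\Lambda$-analog of Lemma~\ref{l:images}).

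The heart of the proof is the distortion estimate. For each $I \in \hat{\mathcal{I}}$, the first lemma of this section guarantees that $g_+(\hat{I})$ and $g_-(\hat{I})$ are disjoint subsets of $\hat{I}$, so at least one of them --- say $g_+(\hat{I})$ --- has length at most $|\hat{I}|/2$. I aim to show $\varkappa(g_+, \hat{I}) \geq \log 2$, which reduces via the mean value theorem to $g_+'(\hat{x}_+) = 1$. The upper bound $g_+'(\hat{x}_+) \leq 1$ follows from Lemma~\ref{l:mex-der=1}: since $g_+$ has no fixed point in $\hat{I}$, the iterates $g_+^n(\hat{x}_-)$ form a monotone sequence in $\hat{I}$ whose only possible accumulation point is $\hat{x}_+$, so this endpoint attracts from the left. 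The matching lower bound is immediate from Lemma~\ref{l:mex-der=1} in the subcase $\hat{x}_+ = x_+$ (i.e.\ when $x_+ \in \Lambda$ is non-isolated on both sides); in the complementary subcase $\hat{x}_+ < x_+$, one exploits that $g_+$ preserves the gap $J$ with endpoints $\hat{x}_+$ and $\hat{x}_+^*$ (both fixed by $g_+$ per Lemma~\ref{l:mex-ends}), hence lies in its cyclic stabilizer $\langle h \rangle$ (by the Hector-type result invoked in Lemma~\ref{l:exterior-components}); writing $g_+ = h^k$ and observing that $h$ must fix $x_+$ as well (orientation-preserving diffeomorphisms of an interval have no nontrivial periodic orbits), one may relate $g_+'(\hat{x}_+) = h'(\hat{x}_+)^k$ to $g_+'(x_+) = h'(x_+)^k \geq 1$.

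Once $\varkappa(g_+, \hat{I}) \geq \log 2$ is established, Proposition~\ref{p:sum} yields $\sum_{j=0}^{n-1} |\gamma_j \cdots \gamma_1(\hat{I})| \geq \log 2 / C_{\mathcal{G}}$ along the corresponding right cycle. Since the intermediate images along distinct cycles lie in distinct members of $\hat{\mathcal{I}}$ and have pairwise disjoint intersections with $\Lambda$, a global summation bounds the total contribution by the length of the circle, yielding finitely many right cycles; the same works for left cycles. Because cycle lengths are themselves uniformly bounded --- the distortion estimate can be applied starting from each member of a cycle, forcing the word length of $g_+$ to be bounded --- the set $\hat{\mathcal{I}}$ is finite.

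The main obstacle will be precisely the derivative estimate $g_+'(\hat{x}_+) \geq 1$ in the subcase $\hat{x}_+ < x_+$: Lemma~\ref{l:mex-der=1} only constrains the derivative at $x_+$, so one must propagate this information to the other endpoint of the gap through the cyclic stabilizer structure of $\mathrm{Stab}(J)$, a delicate step which does not appear in the minimal case and which is the one place where the geometry of $\Lambda$ genuinely enters the argument.
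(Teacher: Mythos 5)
Your overall strategy is the right one and mirrors the paper's, but the specific reduction you propose introduces a gap. You aim to show $\varkappa(g_+,\hat I)\ge\log 2$ by first proving $g_+'(\hat x_+)=1$, and for the lower bound $g_+'(\hat x_+)\ge 1$ in the subcase $\hat x_+<x_+$ you invoke the cyclic stabilizer of the gap $J$: write $g_+=h^k$, observe that $h$ also fixes $x_+$, and ``relate'' $g_+'(\hat x_+)=h'(\hat x_+)^k$ to $g_+'(x_+)=h'(x_+)^k\ge 1$. This relation does not actually propagate the inequality: $\hat x_+$ and $x_+$ are distinct fixed points of $h$, and the derivatives of a diffeomorphism at distinct fixed points are completely independent. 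Nothing prevents, say, $h'(\hat x_+)<1<h'(x_+)$ with $k>0$, in which case $g_+'(\hat x_+)<1$ while $g_+'(x_+)\ge 1$; all that you have actually established is the upper bound $g_+'(\hat x_+)\le 1$ (from the attracting dynamics inside $\hat I$), and the converse inequality is not forced.

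The step is not only gapped but also unnecessary. The paper instead measures the distortion of $g_+$ on the interval $\hat I_+=(\hat x_-,x_+)$, which simultaneously contains the point of $\hat I$ where $g_+'\le 1/2$ (from $|g_+(\hat I)|\le|\hat I|/2$) and the endpoint $x_+$ where $g_+'(x_+)\ge 1$ is known from Lemma~\ref{l:mex-der=1}; this gives $\varkappa(g_+,\hat I_+)\ge\log 2$ directly, with no need to control $g_+'(\hat x_+)$. Since the first lemma of the section guarantees that the $\hat I$-admissible images of $\hat I_+$ are pairwise disjoint, Proposition~\ref{p:sum} applies to $\hat I_+$ and yields the required lower bound $\log 2/C_{\mathcal G}$ on the sum of lengths along the cycle, after which your finiteness argument goes through. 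If you replace your analysis of $g_+'(\hat x_+)$ by this use of $\hat I_+$, the rest of your proposal matches the paper's proof.
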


\begin{proof}
For each $\hI$ corresponding to an interval $I \in \hmI$, the images $g_+(\hI)$ and $g_-(\hI)$ are 
disjoint. 
Hence, the derivative of at least one of the maps $g_+, g_-$ attains on~$\hI$ a value that does not 
exceed~$1/2$. On the other hand, by Lemma \ref{l:mex-der=1} , we have $g_-'(x_-) = g_+'(x_+) \ge 1$. 
Thus, we have a lower bound for the distortion of at least one of the maps $g_+$, $g_-$, and hence~$I$ 
belongs to either a left or a right cycle of intervals with sum of lengths bounded from below by 
$\frac{\log 2}{C_\mathcal{G}}$. Since there is only a finite number of such possible cycles, 
there are only finitely many connected components of each set~$\widetilde{M}_{\gamma}$ 
that intersect~$\Lambda$. 
\end{proof}

We have seen that the set $\hmI$ is finite. Moreover, Lemma~\ref{l:mex-lr-r} implies that immediately next 
to the right of a point $\hat{x}_+^*$ associated to an interval $I\in\hmI$, there is another interval of the same 
type. Hence,  except for a finite number of points, the set $\Lambda$ is covered by a finite family of intervals 
$\hI$. Denote this family by $\hmI_0$, and define $\wR : \bigcup_{\hI\in\hmI_0} \hI \to \mathbb{S}^1$ by 
$$\wR|_{\hI} = \gamma^{-1}|_{\hI}, \quad \forall \gamma \in \mathcal{G}, \quad \forall \hI\subset \widetilde{M}_{\gamma}.$$
Analogously to the minimal case, this induces a Markovian dynamics on $\Lambda$: the interval~$\gamma^{-1}(\hI)$ i
s a union of some intervals from $\hmI_0$ and connected components of $\mathbb{S}^1\setminus \Lambda$. Then 
we define the refined partitions~$\hmI_j$ as the set of connected components of all the preimages $\wR^{-1}(\bar{I})$, 
where $\bar{I}\in \hmI_{j-1}$. As in Lemma \ref{l:diameters}, we have 

\begin{lemma}\label{l:mex-diameters}
The diameter of the partition $\hmI_j$ tends to zero as $j\to\infty$.
\end{lemma}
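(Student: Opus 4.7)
The plan is to follow the strategy of Lemma~\ref{l:diameters}, adapted to the Markovian dynamics on $\Lambda$. The key issue is that $\wR$ has parabolic-like behavior at the endpoints $x_\pm$ of intervals in $\hmI_0$ (the first-returns $g_+, g_-$ have derivative $1$ there by Lemma~\ref{l:mex-der=1}), so direct iteration of $\wR$ does not immediately give uniform distortion bounds. I would therefore first introduce a first-exit map construction analogous to the one used in \S\ref{s:cones}. Namely, for each $\hI = (\hat{x}_-, \hat{x}_+) \in \hmI_0$, with associated first-returns $g_+, g_-$, define a sub-interval
$$
\widetilde{J}_{(\hI)} := \big( g_-^2(\hat{x}_+),\, g_+^2(\hat{x}_-) \big)
$$
and let $\widetilde{\wR}_{(\hI)}$ be the first-exit of $\hI$ into $\widetilde{J}_{(\hI)}$ under forward iteration of $\wR$. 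Composing with $\wR^j$ as in the minimal case, one obtains refined partitions $\widetilde{\hmI}_j$ whose associated dynamics $\widetilde{\wR}_j$ has uniformly bounded distortion: the verification is essentially identical to its counterpart in the minimal case, since the underlying geodesic sits inside a single cone $\mathcal{C}_\gamma$ and the image intervals are bounded away from the parabolic endpoints.

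Next, the splitting argument works exactly as in Lemma~\ref{l:diameters}: by Lemma~\ref{l:mex-ends}, $g_+ \neq g_-$ with $g_+(\hI)$ and $g_-(\hI)$ disjoint and strictly contained in $\hI$, each retaining a proportion of $|\hI|$ uniformly bounded below (by $1/(2e^{C_3})$, arguing as in Lemma~\ref{l:ends}). Combined with the uniform distortion bound on $\widetilde{\wR}_j$, this forces each element of $\widetilde{\hmI}_j$ to eventually subdivide into pieces of comparable size, so the diameter of $\widetilde{\hmI}_j$ tends to zero.

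Finally, to pass from $\widetilde{\hmI}_j$ to $\hmI_j$, note that $\widetilde{\wR}_{(\hI)}$ acts as a finite (but varying) iterate of $\wR$ on each interval of its natural Markov decomposition, so for every $j$ there exists $j'$ such that each element of $\hmI_{j'}$ is contained in some element of $\widetilde{\hmI}_j$. Hence $\mathrm{diam}(\hmI_{j'}) \leq \mathrm{diam}(\widetilde{\hmI}_j) \to 0$, as desired.

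The main obstacle is verifying that the distortion of $\widetilde{\wR}_j$ remains uniformly bounded despite working on $\Lambda$ rather than the whole circle: one must check that $\widetilde{S}_\gamma$ is still uniformly bounded on the relevant compact sub-intervals $\widetilde{J}_{(\hI)}$, which requires combining Lemma~\ref{l:inf-sum} with the analogue of the control-of-distortion reasoning from the minimal case together with Lemma~\ref{l:exterior-components} to handle points of $\hI \setminus \Lambda$. Once this is in place, the rest is routine.
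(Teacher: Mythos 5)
Your overall approach is the same as the paper's: introduce the first-exit map into the interval $\widetilde{J}_{(\hI)} = (g_-^2(\hat{x}_+), g_+^2(\hat{x}_-))$, compose it with $\wR^j$, establish uniform distortion for the resulting compositions via control of sums of intermediate derivatives, and then invoke the splitting argument from Lemma~\ref{l:diameters}. That is exactly how the paper proceeds.

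However, your final "passage" step contains a genuine gap. You claim that for every $j$ there exists $j'$ such that each element of $\hmI_{j'}$ is contained in some element of $\widetilde{\hmI}_j$. This cannot hold for any fixed $j'$: the first-exit iterate $i$ in the definition of $\tR$ is unbounded over the intervals accumulating at the parabolic endpoints $\hat{x}_\pm$, so elements of $\widetilde{\hmI}_j$ near $\hat{x}_\pm$ correspond to words of length $j+i$ with $i$ arbitrarily large. For $i > (j'-j)/|g_+|$, the endpoint $g_+^{i+1}(\hat{x}_-)$ of a fundamental domain need not lie in $N_{j'-j}$, so an element of $\hmI_{j'}$ mapped by $\wR^j$ near $\hat{x}_+$ can straddle several fundamental domains of $g_+$ and thus be contained in no single element of $\widetilde{\hmI}_j$. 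The statement of the lemma is nonetheless true, but the straddling pieces must be controlled by a separate (and easy) observation: they are confined to the tail $\bigcup_{i > (j'-j)/|g_+|} g_+^i(\hI)$, whose total length tends to zero as $j' \to \infty$ since the fundamental domains of the parabolic $g_+$ accumulate at $\hat{x}_+$. The paper compresses this into "as in the minimal case," but note that the minimal-case Lemma~\ref{l:diameters} is stated for the first-exit refinement $\widetilde{\mJ}_j$ rather than for $\mJ_j$, so the two statements are not literally parallel and the extra observation is needed to deduce the result about $\hmI_j$ itself.
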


\begin{proof}
Proceeding in a similar way to that of the minimal case, 
consider the first-exit map defined on each $\hI\in \hmI_0$ as
$$
\tR|_{\hI}(y)=\left\{\begin{array}{lll}
g_-^{-i}(y), &\quad & y\in (g_-^{i+2}(\hat{x}_+),g_-^{i+1}(\hat{x}_+)),\\
g_+^{-i}(y), &\quad & y\in (g_+^{i+1}(\hat{x}_{-}),g_+^{i+2}(\hat{x}_{-})),\\
y, &\quad & y\in (g_-^2(\hat{x}_+),g_+^2(\hat{x}_-)).
\end{array}\right.
$$
As before, we see that the distortion of the composition $\tR\circ \wR^j$ is bounded uniformly in 
$j$ on any interval of continuity. Thus, we have a uniform control for the distortion of the branches 
of the inverses of such compositions. The fact that the diameters of the refined partitions 
$\hmI_{j}$ tend to zero follows from this fact as in the minimal case. 
\end{proof}

Another application of the uniform control of distortion yields that the dynamics 
of the compositions $\tR\circ \wR^j$ becomes uniformly expanding for sufficiently 
large $j$. (Compare Corollary \ref{c:needed}.) As a consequence, the points in 
$\Lambda$ that are never sent by iterations of $\wR$ into points of the form 
$\hat{x}_-,\hat{x}_+$ are expanded under suitable group elements. Since the 
latter points are fixed by the elements $g_-,g_+$, this proves property~\sLmin, 
and hence closes the proof for free-group actions.

\vspace{0.25cm}

Let us now consider the general case. Recall that a result of Ghys states that every finitely-generated 
group of real-analytic circle diffeomorphisms having an exceptional minimal set is virtually free; see~\cite{ghys:topology}. 
Hence, let $H$ be a finite-index free subgroup of $G$, and let $g_1,\ldots,g_k$ be representatives of all left-classes of 
$H$ in $G$. We claim that $H$ preserves an exceptional minimal set (in particular, it has rank $\geq 2$, because 
of Denjoy's theorem). Otherwise, according to 
\cite{Ghys-actions,Navas-Es}, the group $H$ would either act minimally or admit 
a finite orbit~$A$. The former case is impossible, as $G$ does not act minimally. The latter case is also impossible, 
as it would imply that $G$ preserves the finite set $\bigcup_{i=1}^k g_i (A)$.

Let then $\widehat{\Lambda}$ be the exceptional minimal set for the action of $H$. We claim that the set 
$$\bigcup_{i=1}^{k} g_i (\widehat{\Lambda})$$
is the exceptional minimal set $\Lambda$ of $G$. Indeed, since $\Lambda$ is $H$-invariant, it contains a minimal 
$H$-invariant set, which must coincide with $\widehat{\Lambda}$ because of the uniqueness of exceptional minimal 
sets. As a consequence, the set $\bigcup_{i=1}^{k} g_i (\widehat{\Lambda})$ is $G$-invariant and contained in 
$\Lambda$. It must hence coincide with $\Lambda$, because of the minimality of $\Lambda$.

Since $H$ is a (necessarily higher-rank) free group, it must satisfy property 
$(\widehat{\Lambda} \star)$. Hence, due to Theorem~\ref{t:DKN; version exceptional}.3), 
the set of points of bounded $H$-expansivity in $\widehat{\Lambda}$ coincides with $H(\NE(H))$. As 
$\Lambda = \bigcup_{i=1}^{k} g_i (\widehat{\Lambda})$, and thus any point of $\Lambda$ can be mapped into $\widehat{\Lambda}$ by one of the $g_i$'s, the set of points of $\Lambda$ of bounded $G$-expansivity is a subset of 
$G(\NE(H))$. In particular, any non $G$-expandable point belongs to $G(\NE(H))$ and hence is fixed by a nontrivial 
element of $G$. We conclude that $G$ satisfies property~\sLmin, thus finishing the proof of the Main Theorem. 

\begin{remark} 
In the proof above, in cases where the set $\NE \cap \Lambda$ is nonempty, we have established property 
$\sLmin$ simultaneously with the Markovian property for the dynamics. (Strictly speaking, we have done 
this only for the case of the free group, but passing to finite extensions is straightforward.) This remark is 
not innoucous, as it shows that the approach of \cite{CC1,CC2,matsumoto} to the 
question of the Lebesgue measure of exceptional minimal sets, as well as the finiteness of the orbits 
of the connected components of the complement, pointed in the right direction. 
(See also \cite{inaba-matsumoto}.) Moreover, this again 
shows why it was worth considering two different cases in \S{}\ref{s:proof} instead of reducing the 
proof to a single one (see Remark~\ref{r:rare}). 
\end{remark}

We close this work with the

\vspace{0.15cm} 

\noindent{\bf Proof of Corollary \ref{c:finite-comp}.} Under the hypothesis, we have established property \sLmin. 
Let $y_1,\ldots,y_m$ be the non-expandable points (we allow the possibility $m=0$). For each index 
$i$, let $\widetilde{U}_i$ 
be a neighborhood of $y_i$ sufficiently small so that there exists~$g_i \in G$ fixing this point and having no other 
fixed point therein. By considering smaller neighborhoods $U_i \subset \widetilde{U}_i$, we may  assume that the 
first iterate of $g_i^{\pm 1}$ that sends a point $x \in U_i$ different from $y_i$ outside $\widetilde{U}_i$ has derivative 
$\geq 2$. Indeed, this follows from control of distortion just by writing $\widetilde{U}_i \setminus \{ y_i \}$ as the 
union of fundamental domains for the action of $g_i$ on the left and the right neighborhoods of $y_i$.

Points in the set $\Lambda \setminus \bigcup_i U_i$ are expandable, hence we may fix $\lambda > 0$ and 
a covering of this set by open intervals $V_1,\ldots,V_n$ such that on each $V_i$, certain element 
$h_i \in G$ expands by a factor at least $e^{\lambda}$. Now consider the open covering 
$$\Lambda \subset \bigcup_{i=1}^m U_i \cup \bigcup_{i=1}^n V_i.$$
There are only finitely many connected components of $\mathbb{S}^1 \setminus \Lambda$ that are not contained 
in one of these sets $U_i,V_i$; let us denote them by $I_1,\ldots,I_k$. We claim that the orbit of every connected 
component $I$ of $\mathbb{S}^1 \setminus \Lambda$ coincides with that of one of these intervals. Indeed, 
if $I$ does not coincide with none of them, then it is contained either in a certain interval $U_i$ or in 
some $V_i$. In the former case, the action of a suitable power of $g_i$ expands its length by a factor at least 
$2$, whereas in the latter, expansion by a factor at least $e^{\lambda}$ is performed by $h_i$. After this first 
expansion, either we have reached an interval of the form $I_j$, or the image remains in one of the $U_i,V_i$, 
so that we may repeat the procedure. Obviously, this expansion process must stop in finite time. At such a time, 
we have reached one of the intervals $I_j$, so that the orbit of this interval coincides with that of $I$. 
$\hfill\square$

\vspace{0.5cm}

\noindent{\bf{\Large Acknowledgments}}

The authors would like to thank Lawrence Conlon, \'Etienne Ghys, Steve Hurder, Yulij Ilyashenko, 
Shigenori Matsumoto, Julio Rebelo, and Dennis Sullivan, for many discussions all along this research 
project. We would also like to thank the anonymous referees for their careful work and helpful remarks.

B. Deroin's research was partially supported by ANR-08-JCJC-0130-01, ANR-09-BLAN-0116

V. Kleptsyn acknowledges the support of an RFBR project 13-01-00969-a and 
of a joint RFBR/CNRS project 10-01-93115-CNRS\_a

A. Navas acknowledges the support of the ACT-1103 project DySyRF.

Besides our host institutions, we thank the PUC-Chile, the CRM Barcelona, the 
Poncelet Laboratory in Moscow, the MRCC of Bedlewo, \emr{and the CIRM in Marseille,} 
for the very nice working conditions provided at several stages of this work.


\begin{small}

\addcontentsline{toc}{section}{References}

\vspace{0.1cm}

\noindent Bertrand Deroin\\
D\'epartement de Math\'ematiques et Applications
\'Ecole Normale Sup\'erieure\\
45 Rue d'Ulm, 75005 Paris\\
bertrand.deroin@ens.fr

\vspace{0.3cm}

\noindent Victor Kleptsyn\\
Institut de Recherches Math\'ematiques de Rennes (UMR 6625 CNRS)\\
Campus Beaulieu, 35042 Rennes, France\\
Victor.Kleptsyn@univ-rennes1.fr

\vspace{0.3cm}

\noindent Andr\'es Navas\\
Universidad de Santiago de Chile\\
Alameda 3363, Estaci\'on Central, Santiago, Chile\\
andres.navas@usach.cl

\end{small}

\end{document}